\documentclass[12pt]{amsart}

\usepackage[utf8]{inputenc}

\usepackage[utf8]{inputenc}
\usepackage{amssymb}
\usepackage{graphicx}
\usepackage{graphicx,color}
\usepackage{latexsym}
\usepackage[all]{xy}
\usepackage{mathrsfs}
\usepackage{enumerate}
\usepackage{multicol}
\usepackage{lipsum}
\usepackage{amssymb}
\usepackage{tikz}
\usepackage{float}
\usepackage{bm}
\usepackage{mathptmx}
\usetikzlibrary{shapes.geometric}

\usepackage{caption}
\usepackage{subcaption}

\usepackage{amsmath}
\usepackage{amsfonts}
\usepackage{amssymb,enumerate}
\usepackage{amsthm}
\usepackage{fancyhdr}
\usepackage{hyperref}
\usepackage{cleveref}
\usepackage{xcolor}
\usepackage{enumitem}
\usepackage{float}
\usepackage{verbatim}
\usepackage{tikz}
\usetikzlibrary[patterns]
\usetikzlibrary{arrows}
\usepackage{mathrsfs}
\usetikzlibrary{arrows}
\usepackage{listings}
\definecolor{DimGray}{rgb}{0.41, 0.41, 0.41}
\definecolor{zzttqq}{rgb}{0.6,0.2,0.}
\textwidth = 16.00cm
\textheight = 22.00cm
\oddsidemargin = 0.12in
\evensidemargin = 0.12in
\setlength{\parindent}{0pt}


\newtheorem{theorem}{Theorem}[section]
\theoremstyle{definition}
\newtheorem{defin}[theorem]{Definition}
\newtheorem{question}[theorem]{Question}

\newtheorem{ex}[theorem]{Example}
\theoremstyle{plain}
\newtheorem{proposition}[theorem]{Proposition}
\newtheorem*{theorem*}{Theorem}
\newtheorem{lemma}[theorem]{Lemma}
\newtheorem{corollary}[theorem]{Corollary}
\newtheorem{conj}[theorem]{Conjecture}
\theoremstyle{remark}
\newtheorem{rem}[theorem]{Remark}

\numberwithin{equation}{section}
\numberwithin{figure}{section}

\title[Supersymmetric gaps]{Supersymmetric gaps of a numerical semigroup with two generators}
\author{Patricio Almir\'on}
\author{Julio-Jos\'e Moyano-Fern\'andez}

\subjclass[2010]{Primary: 20M14; Secondary: 05A19,20M30.}

\keywords{Numerical semigroup, Frobenius problem, $\Gamma$-semimodule, syzygy}

\thanks{The first author was partially supported by Spanish Goverment, Ministerios de Ciencia e Innovaci\'on y de Universidades MTM2016-76868-C2-1-P. The second author was partially supported by the Spanish Government, Ministerios de Ciencia e Innovaci\'on y de Universidades, grant PGC2018-096446-B-C22, as well as by Universitat Jaume I, grant UJI-B2018-10. Both authors contributed equally to this work.}

\address{Instituto de Matemática interdisciplinar (IMI) y departamento de \'{A}lgebra, Geometr\'{i}a y Topolog\'{i}a\\
Facultad de Ciencias Matem\'{a}ticas\\
Universidad Complutense de Madrid\\
28040, Madrid, Spain.}

\email{palmiron@ucm.es}

\address{Universitat Jaume I, Campus de Riu Sec, Departamento de Matem\'aticas \& Institut Universitari de Matem\`atiques i Aplicacions de Castell\'o, 12071
Caste\-ll\'on de la Plana, Spain.}

\email{moyano@uji.es}

\bibliographystyle{amsplain}

\begin{document}

\begin{abstract}
In this paper we introduce the new concepts of supersymmetric and self-symmetric gaps of a numerical semigroup with two generators. Those concepts are based on certain symmetries of the gaps of the semigroup with respect to their Wilf number. We prove that the set of supersymmetric and self-symmetric gaps completely determines the semigroup and we compare this set with the fundamental gaps of the semigroup.
\end{abstract}

\maketitle

\section{Introduction}
 A numerical semigroup $\Gamma$ is an additive submonoid of the monoid $(\mathbb{N},+)$ such that the greatest common divisor of all its elements is equal to $1$. The complement $\mathbb{N}\setminus \Gamma$ is therefore finite and the elements of that complement are called gaps of $\Gamma$. Moreover, $\Gamma$ is finitely generated and it is not difficult to find a minimal system of generators of $\Gamma$. 
\medskip

In 2004, Rosales et al. \cite{Rosetal} defined the concept of fundamental gaps of a numerical semigroup as an alternative way to represent a numerical semigroup as the set
 $$
 \mathcal{FG}=\mathcal{FG}(\Gamma):=\{g\in\mathbb{N}\setminus\Gamma:\;\;\{2g,3g\}\subset\Gamma\}.
 $$
 
 From \(\mathcal{FG}\) one can define the set
 \(\mathcal{D}(\mathcal{FG}):=\{x\in\mathbb{N}: \;x|x_i\;\text{for some }\;x_i\in\mathcal{FG}\}\) and see that \(\Gamma=\mathbb{N}\setminus \mathcal{D}(\mathcal{FG})\) provides an alternative representation of \(\Gamma\). Moreover, \(\mathcal{FG}\) is the smallest subset of \(\mathbb{N}\setminus\Gamma\) that \(H\)--determines the semigroup \(\Gamma\): a subset $X$ of $\mathbb{N}$ is said to $H$-determine $\Gamma$ if $\Gamma$ is the maximum (with respect to the set inclusion) numerical semigroup such that $X$ is a subset of $\mathbb{N}\setminus \Gamma$ (see \cite{Rosetal}; also Section \ref{subsec:fundvssym}).
 \medskip
 
In this paper, we are mainly concerned about numerical semigroups of the form \(\Gamma=\langle\alpha,\beta \rangle\). For those semigroups, we are going to introduce the concept of \emph{supersymmetric gap} and \emph{self-symmetric gap}. Let us denote $\mathsf{SG}$~resp.~$\mathsf{SSG}$ the set of supersymmetric~resp.~self-symmetric gaps. We prove that the set \(\mathsf{SG}\cup \mathsf{SSG}\) completely determines the semigroup \(\Gamma\) (see Theorem \ref{thm:partition}). Our construction lies on the application of certain affine linear transformations to the sets \(\mathsf{SG}, \mathsf{SSG}\) represented in the lattice \(\mathbb{N}^2\); we call the process of representation of \(\Gamma\) via those transformations \emph{polyomino game}. The set \(\mathsf{SG}\cup \mathsf{SSG}\) presents some differences over the set \(\mathcal{FG}\): it is not contained in \(\mathcal{FG}\) and its cardinality is less than or equal to the cardinality of the set of fundamental gaps whenever $\alpha >2$ (as well as in the case $\Gamma=\langle 2,3\rangle$), see Section \ref{subsec:fundvssym}. This is relevant, since \(\mathsf{SG}\cup \mathsf{SSG}\) is a sort of generating system of $\Gamma$ which is---in general---smaller than other existing systems.
\medskip

Our new concept of supersymmetric and self-symmetric gaps is closely related to a long standing conjecture proposed by H. Wilf in 1978 that can be formulated as follows \cite{wilf}:
 \begin{conj}[Wilf conjecture]
 	Let \(\Gamma\) be a numerical semigroup minimally generated by \(x_1,\dots,x_n\). Let us denote by \(c(\Gamma)\) the conductor of \( \Gamma\). Then,
 	\[
	c(\Gamma)\geq \frac{n}{n-1}|\mathbb{N}\setminus\Gamma|.
	\]
 \end{conj}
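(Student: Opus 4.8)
This final statement is the \emph{Wilf conjecture}, which remains open in general; accordingly, what follows is the line of attack I would adopt rather than a complete argument, and I will flag the precise point at which every known approach currently stalls. The plan is to reduce the inequality to an equivalent counting statement about the elements lying below the conductor, to encode the semigroup through its Apéry set, and then to try to bound the number of minimal generators block by block.

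First I would rewrite the inequality in the standard equivalent form. Let $g=|\mathbb{N}\setminus\Gamma|$ denote the genus and let $\ell=|\{s\in\Gamma:\,s<c(\Gamma)\}|$ denote the number of left elements. Since the integers in $\{0,1,\dots,c(\Gamma)-1\}$ are partitioned into gaps and non-gaps, one has $c(\Gamma)=\ell+g$. Substituting this into $c(\Gamma)\geq\frac{n}{n-1}g$ and clearing denominators, the conjecture is seen to be equivalent to the assertion
\[
n\cdot\ell\;\geq\;c(\Gamma).
\]
This is the form I would work with: it demands that the number of minimal generators, scaled by the number of semigroup elements below the conductor, dominate the conductor. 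Note that in the two-generator case treated in this paper $c(\Gamma)=(\alpha-1)(\beta-1)=2g$, whence $\ell=g$ and $n\ell=2g=c(\Gamma)$, so equality holds; this both recovers the $n=2$ instance and shows the bound is sharp.

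Next I would pass to the Apéry structure. Let $m=\min(\Gamma\setminus\{0\})$ be the multiplicity and let $\mathrm{Ap}(\Gamma,m)$ be the set of $m$ least elements of $\Gamma$ in the distinct residue classes modulo $m$, which determines $\Gamma$. Setting the depth $q=\lceil c(\Gamma)/m\rceil$, I would stratify the integers into the blocks $B_j=[jm,(j+1)m)$ for $0\leq j\leq q-1$ and put $n_j=|\Gamma\cap B_j|$. The basic monotonicity is that $x\mapsto x+m$ injects $\Gamma\cap B_j$ into $\Gamma\cap B_{j+1}$, so the profile $(n_0,n_1,\dots)$ is non-decreasing, and $c(\Gamma)\leq qm$. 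With $\ell=\sum_{j=0}^{q-1}|\Gamma\cap B_j\cap[0,c(\Gamma))|$, the target $n\ell\geq c(\Gamma)$ becomes a comparison, block by block, between the generator count and the accumulated left-element counts.

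The crux is the generator count. Apart from $m$ itself, every minimal generator lies in some block $B_j$ with $j\geq 1$, and the goal is to show that the presence of generators in the higher blocks forces enough small elements to keep $n\ell\geq c(\Gamma)$. For shallow semigroups this closes: when $q\leq 2$ the inequality is immediate, and Eliahou's theorem settles $q=3$ via a Macaulay-type bound on the growth of the $n_j$; the conjecture is likewise known when the number of generators is large relative to the multiplicity (Sammartano) and throughout extensive computational ranges. The main obstacle---and the reason the conjecture is still open---is exactly the regime of large depth $q$ with few generators: minimal generators may concentrate in a single high block near the conductor while the counts $n_j$ grow only slowly, and there is no known uniform mechanism guaranteeing that such high generators contribute enough left elements in the lower blocks to offset the deficit $qm-n\ell$. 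I would therefore expect the entire attack to hinge on producing a quantitative, inductively stable lower bound for $n_j$ in terms of the minimal generators encountered in the blocks $B_0,\dots,B_{j-1}$ (equivalently, in terms of the Kunz coordinates of $\Gamma$); absent such a bound, the argument breaks down precisely at the top block.
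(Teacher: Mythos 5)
You have correctly identified the essential point: this statement is Wilf's conjecture (1978), which the paper states without any proof---it appears purely as motivation for the definition of the Wilf number $W(\Delta)=e(\Delta)\cdot\delta(\Delta)-c(\Delta)$---and which remains open. There is therefore no proof in the paper to compare yours against, and you rightly do not claim to have one. Everything you actually assert is correct: with $\ell=|\{s\in\Gamma : s<c(\Gamma)\}|$ and $c(\Gamma)=\ell+g$, the inequality $c(\Gamma)\geq\frac{n}{n-1}g$ is indeed equivalent to $n\ell\geq c(\Gamma)$, and this is exactly the paper's own reformulation $W(\Gamma)\geq 0$ recorded just after the definition in Section 3 (there $e(\Gamma)=n$ and $\delta(\Gamma)=\ell$; note that in the two-generator case the paper may freely write $\delta(\Gamma)=|\mathbb{N}\setminus\Gamma|$ because such semigroups are symmetric, so $\ell=g$). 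Your check of the $n=2$ case is also right: $c=(\alpha-1)(\beta-1)$, $g=c/2$, hence $n\ell=2g=c$ and equality holds, confirming sharpness. Your survey of the state of the art is accurate as well: the depth-$q\leq 3$ case (Eliahou), the case of many generators relative to the multiplicity (Sammartano), and the observation that the genuinely open regime is large depth with few generators, where no uniform lower bound on the block counts $n_j$ in terms of Kunz coordinates is known.

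The only caveat worth recording is the obvious one: your text is an attack plan, not a proof, and it stalls precisely where the open problem stalls (the top block in the Ap\'ery stratification). Since the paper itself offers no argument for the conjecture---its actual contributions (Theorem \ref{thm:equifix}, Theorem \ref{thm:partition}, Proposition \ref{prop:uff}) concern Wilf numbers of individual gaps of $\langle\alpha,\beta\rangle$ rather than $W(\Gamma)\geq 0$ in general---there is no gap to charge against you beyond the gap that the entire field shares.
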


Inspired by this inequality, we consider a \(\Gamma\)--semimodule \(\Delta\) minimally generated by \(\mathrm{ed}(\Delta)\) elements  and we define the Wilf number of \(\Delta\) as
 	$$
	W(\Delta)=e(\Delta)\cdot \delta (\Delta)-c(\Delta),
	$$
where \(c(\Delta)\) denotes the conductor of \(\Delta\) and \(\delta (\Delta)=\{x\in \Delta : x < c(\Delta)\}.\) 
Observe that for $\Delta=\Gamma$ this number was already considered in \cite[p.~45]{mdelgado}.
\medskip

In the case \(\Gamma=\langle\alpha,\beta \rangle\), the second author together with Uliczka \cite{MU1} proved that every \(\Gamma\)--semimodule corresponds to a certain lattice path of the lattice \(\mathbb{N}^2\). The lattice path representation has already led to a formula for the conductor \(c(\Delta)\) of those \(\Gamma\)--semimodules, see \cite{almiyano}. In particular, this allows us to explicitly compute the Wilf number associated to a \(\Gamma\)--semimodule. Moreover, in the case that \(\Delta\) is generated by \([0,g]\) for \(g\in\mathbb{N}\setminus\Gamma\) we see that \(W(\Delta)\) only depends on \(g\) (see Proposition \ref{prop:wilfgap}) so in this case it will be referred to as the Wilf number associated to \(g\). Finally, we observe that the Wilf number provides a beautiful symmetry on the set of gaps (see Sect.~\ref{sec:supersym}) which motivates our definition of supersymmetric gaps. Moreover, if some of the generators of \(\Gamma\) is even, then there exist some gaps whose Wilf number vanishes and that are invariant under several operations; this motivates the name self-symmetric gaps (see Sections \ref{sec:wilf} and \ref{sec:supersym}).
 \medskip
 
To conclude, we discuss some issues regarding the possible extensions of the concepts of supersymmetric and self-symmetric gaps to the general case where \(\Gamma\) is a numerical semigroup with an arbitrary number of generators (see Subsection \ref{remarksconcepts}). More concretely, we propose a general definition for symmetric gaps (see Definition \ref{defin:extensionsym}) and we ask if this definition allows us to define the concepts of supersymmetric and self-symmetric gaps for a semigroup with any number of generators. We hope that---if this extension succeeds---these new concepts could be helpful to the solution of the Wilf conjecture. This hope is supported by some results in \cite[Sect.~4.1]{almiyano2}.

\vskip 2mm

\textbf{Acknowledgments.} The authors would like to thank Alfredo Granell Marqu\'es for the computation of general examples.

\section{Lattice paths and semimodules over a numerical semigroup}\label{sec:dos}

Let $\Gamma$ be a numerical semigroup. The reader is referred to \cite{RosalesGarciaSanchez} or \cite{ram} for specific material about numerical semigroups. We are interested in subsets of $\mathbb{N}$ which have an additive structure over $\Gamma$ (in analogy with the structure of module over a ring): a $\Gamma$-semimodule is a non-empty subset $\Delta$ of $\mathbb{N}$ with $\Delta+\Gamma\subseteq \Delta$; observe that these sets are also called \emph{relative ideals} in the literature, see e.g. \cite{KH}. A system of generators of $\Delta$ is a subset $\mathcal{E}$ of $\Delta$ with $\Delta=\bigcup_{x\in \mathcal{E}} (\Gamma + x)$; it is called minimal if no proper subset of $\mathcal{E}$ generates $\Delta$. Notice that, since $\Delta\setminus \Gamma$ is finite, every $\Gamma$-semimodule is finitely generated and for \(\Delta\neq \mathbb{N}\) has a conductor
$$
c(\Delta)=\max (\mathbb{N}\setminus \Delta)+1.
$$
To a semimodule $\Delta$ we also associate the \emph{$\delta$-invariant} of $\Delta$, namely
$$
\delta (\Delta)=\{x\in \Delta : x < c(\Delta)\}.
$$

Every $\Gamma$-semimodule $\Delta$ has a unique minimal system of generators (see e.g. \cite[Lemma 2.1]{MU1}); its cardinality is said to be the \emph{embedding dimension} of $\Delta$, written $\mathrm{ed}(\Delta)$. Two $\Gamma$-semimodules $\Delta$ and $\Delta'$ are called isomorphic if there is an integer $n$ such that $x\mapsto x+n$ is a bijection from $\Delta$ to $\Delta'$; we write then $\Delta\cong \Delta'$. For every $\Gamma$-semimodule $\Delta$ there is a unique semimodule $\Delta' \cong \Delta$ containing $0$; such a semimodule is called \emph{normalized}. The $\Gamma$-semimodule
$$
\Delta^{\circ} := \{x-\min \Delta : x \in \Delta\}
$$
is called the normalization of $\Delta$; this is the unique $\Gamma$-semimodule isomorphic to $\Delta$ and containing $0$. Moreover, the minimal system of generators $\{x_0=0,\ldots , x_n\}$ of a normalized $\Gamma$-semimodule is a $\Gamma$-lean set, i.e. it satisfies that
$$
|x_i-x_j| \notin \Gamma \ \ \mbox{for~any} \ \ 0\leq i <j \leq n,
$$
and conversely, every $\Gamma$-lean set of $\mathbb{N}$ minimally generates a normalized $\Gamma$-semimodule; we will write then $[x_0=0,\ldots , x_n]$. Hence there is a bijection between the set of isomorphism classes of $\Gamma$-semimodules and the set of $\Gamma$-lean sets of $\mathbb{N}$; see  Sect.~2 in \cite{MU1} for the proofs of those statements.
\medskip

The dual $\Delta^{\ast}$ of a $\Gamma$-semimodule $\Delta$ is defined to be
$$
\Delta^{\ast}:=\mathrm{Hom}_{\Gamma}(\Delta,\Gamma)=\{x\in \mathbb{N} : x+\Delta\subseteq \Gamma\},
$$
cf. \cite[p.~677]{MU2}. A $\Gamma$-semimodule is said to be \emph{selfdual} if $\Delta=\Delta^{\ast}$. In addition, we define the set of syzygies of a $\Gamma$-semimodule $\Delta=\Delta_I$ with minimal set of generators $I=[g_0,\ldots , g_n]$ as
$$
\mathrm{Syz}(\Delta):=\bigcup_{i,j\in I, i\neq j} \Big ((\Gamma + g_i)\cap (\Gamma + g_j) \Big ).
$$

\medskip

\subsection*{Numerical semigroups with embedding dimension 2} In this paper we will consider numerical semigroups with two generators, say $\Gamma=\langle \alpha, \beta \rangle$, with $\alpha,\beta \in \mathbb{N}$ and $\alpha < \beta $.
The conductor of $\Gamma$ can be expressed as $c=c(\Gamma)=(\alpha-1)(\beta-1)$. The gaps of $\langle \alpha, \beta \rangle$ are also easy to describe: they admit a representation $\alpha \beta -a\alpha -b \beta$, where $a\in \ ]0,\beta-1]\cap \mathbb{N}$ and $b\in \ ]0,\alpha-1]\cap \mathbb{N}$, see Rosales \cite[Lemma~1]{Rosales}. This writing yields a map from the set of gaps of $\langle \alpha, \beta \rangle$ to $\mathbb{N}^2$ given by $\alpha \beta -a\alpha -b \beta \mapsto (a,b)$, which allows us to identify a gap with a point in the lattice $\mathcal{L}=\mathbb{N}^2$; since the gaps are positive numbers, the point lies inside the triangle with vertices $(0,0),(0,\alpha),(\beta, 0)$. Let us denote by \(LG\) the image of the map $\alpha \beta -a\alpha -b \beta \mapsto (a,b)$, i.e. the points of \(\mathcal{L}\) inside the triangle of vertices $(0,0),(0,\alpha),(\beta, 0)$.
\medskip

In the following we will use the notation
$$
e=\alpha\beta - a(e)\alpha-b(e)\beta
$$
for a gap $e$ of the semigroup $\langle \alpha,\beta \rangle$; if the gap is subscripted as $e_i$ then we write $a_i=a(e_i)$ and $b_i=b(e_i)$. 
\medskip

Let us denote by \(\leq\) the total ordering in \(\mathbb{N},\) if needed we will denote it by \(\leq_{\mathbb{N}}\) to emphasize that it is the natural order. We also consider the following partial ordering $\preceq$ on the set of gaps:

\begin{defin}\label{jorder}
	Given two gaps $e_1,e_2$ of $\langle \alpha , \beta \rangle$, we define 
	$$
	e_1 \preceq e_2 \ \ :\Longleftrightarrow \ a_1\leq a_2 \ \ \wedge \ \ b_1 \geq b_2
	$$
	and
	$$
	e_1 \prec e_2 \ \  :\Longleftrightarrow  \ a_1 < a_2 \ \ \wedge \ \ b_1 >b_2.
	$$
\end{defin}

Let $\mathcal{E}=\{0,e_1,\ldots , e_n\}$ be a subset of $\mathbb{N}$ with gaps $e_i=\alpha\beta -a_i \alpha -b_i\beta$ of $\langle \alpha, \beta \rangle$ for every $i=1,\ldots, n$ such that $a_1<a_2<\cdots < a_n$. Corollary 3.3 in \cite{MU1} ensures that $\mathcal{E}$ is $\langle \alpha , \beta \rangle$-lean if and only if $b_1>b_2>\cdots > b_n$. This simple fact leads to an identification (cf. \cite[Lemma 3.4]{MU1}) between an $\langle \alpha, \beta \rangle$-lean set and a lattice path with steps downwards and to the right from $(0,\alpha)$ to $(\beta,0)$ not crossing the line joining these two points, where the lattice points identified with the gaps in $\mathcal{E}$ mark the turns from the $x$-direction to the $y$-direction; these turns will be called ES-turns for abbreviation. Figure \ref{fig:lattice1} shows the lattice path corresponding to the $\langle 5,7 \rangle$-lean set $[0,9,11,8]$.

\begin{center}
	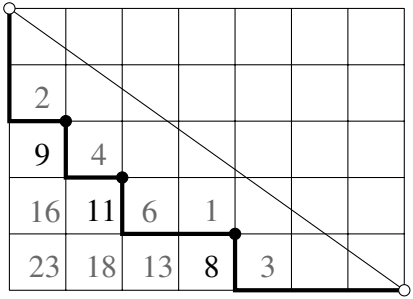
\begin{figure}[H]
		\begin{tikzpicture}[scale=0.75]
		\draw[] (0,0) grid [step=1cm](7,5);
		\draw[] (0,5) -- (7,0);
		\draw[ultra thick] (0,5) -- (0,3) -- (1,3) -- (1,2) -- (2,2) -- (2,1) -- (4,1) -- (4,0) -- (7,0);
		\draw[fill=white] (0,5) circle [radius=0.1]; 
		\draw[fill] (1,3) circle [radius=0.1]; 
		\draw[fill] (2,2) circle [radius=0.1]; 
		\draw[fill] (4,1) circle [radius=0.1]; 
		\draw[fill=white] (7,0) circle [radius=0.1]; 
		
		\node [below right][DimGray] at (0.15,0.8) {$23$};
		\node [below right][DimGray] at (1.15,0.8) {$18$};
		\node [below right][DimGray] at (2.15,0.8) {$13$};
		\node [below right] at (3.25,0.8) {$8$};
		\node [below right][DimGray] at (4.25,0.8) {$3$};
		
		\node [below right][DimGray] at (0.15,1.8) {$16$};
		\node [below right] at (1.15,1.8) {$11$};
		\node [below right][DimGray] at (2.15,1.8) {$6$};
		\node [below right][DimGray] at (3.25,1.8) {$1$};
		
		\node [below right] at (0.25,2.8) {$9$};
		\node [below right][DimGray] at (1.25,2.8) {$4$};
		
		\node [below right][DimGray] at (0.25,3.8) {$2$};
		\end{tikzpicture}
		\caption{Lattice path for the $\langle 5,7 \rangle$-lean set $[0,9,11,8]$.} \label{fig:lattice1}
	\end{figure}
\end{center}
\vspace{-0.9cm}

Let $g_0=0,g_1,\ldots , g_n$ be the minimal system of generators of a $\langle \alpha, \beta \rangle$-semimodule $\Delta$. From now on, we will assume that the indexing in the minimal set of generators of \(\Delta\) is such that \(g_0=0\preceq g_1\preceq\cdots\preceq g_n.\) Under this assumption, we can give an explicit formula for the minimal generators of $\Delta^{\ast}$ in terms of those of $\Delta$: 

\begin{equation}\label{eqn:dual}
\Delta^{\ast}=(\Gamma + a_1\alpha) \cup \bigcup_{k=1}^{n-1} (\Gamma+a_{k+1}\alpha+b_k\beta) \cup (\Gamma+b_n\beta).
\end{equation}

Moreover, the semimodule $\mathrm{Syz}(\Delta)$ of syzygies of $\Delta$ can be characterized as follows (see \cite[Theorem 4.3]{MU1}):
\begin{proposition}\label{defin:syz}
	\begin{equation*}
	\mathrm{Syz}(\Delta) =\bigcup_{0\leq k<j\leq n} \Big ((\Gamma + g_k)\cap (\Gamma + g_j) \Big )= \bigcup_{k=0}^{n} (\Gamma + h_k),
	\end{equation*}
	where $h_1,\ldots , h_{n-1}$ are gaps of $\Gamma$, $h_0,h_n \leq \alpha \beta$, and
	\begin{align*}
	&h_k \equiv g_k ~\mathrm{ mod }~ \alpha, ~h_ k > g_{k}  \ \mbox{for } k=0,\ldots , n \\
	&h_k \equiv g_{k+1} ~ \mathrm{mod } ~\beta,~ h_ k > g_{k+1} \ \mbox{for } k=0,\ldots , n-1\\
	&h_n \equiv 0~\mathrm{mod } ~\beta, \ \mathrm{and } ~h_n \geq 0
	\end{align*}
\end{proposition}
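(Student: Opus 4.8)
The plan is to prove the asserted equality by double inclusion, exploiting that $\mathrm{Syz}(\Delta)$ is itself a $\Gamma$-semimodule: it is a finite union of sets $(\Gamma+g_k)\cap(\Gamma+g_j)$, each an intersection of translates of $\Gamma$ and hence a $\Gamma$-semimodule, so their union is again one. Consequently $\mathrm{Syz}(\Delta)$ is determined by its $\Gamma$-minimal generators, and it suffices to match those with the $h_k$. Throughout I would keep in mind the lattice-path picture of Section~\ref{sec:dos}: the generators $g_0,\dots,g_n$ are the ES-turns of the staircase, and I expect the $h_k$ to be the complementary SE-turns (the inner corners) together with the two corners lying on the edges of the triangle $LG$. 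This is what the stated ranges encode: inner corners are interior lattice points of $LG$, hence gaps (so $h_1,\dots,h_{n-1}\in\mathbb N\setminus\Gamma$), while the two extreme corners sit on the edges $a=0$ and $b=0$ of the triangle and are therefore multiples of a single generator, hence elements of $\Gamma$ bounded by $\alpha\beta$.

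For the inclusion $\bigcup_k(\Gamma+h_k)\subseteq\mathrm{Syz}(\Delta)$ I would argue directly from the defining congruences. For $0\le k\le n-1$, the condition $h_k\equiv g_k\ (\mathrm{mod}\ \alpha)$ together with $h_k>g_k$ forces $h_k-g_k$ to be a positive multiple of $\alpha$, hence an element of $\Gamma$, so $h_k\in\Gamma+g_k$; likewise $h_k\equiv g_{k+1}\ (\mathrm{mod}\ \beta)$ and $h_k>g_{k+1}$ give $h_k-g_{k+1}\in\Gamma$, so $h_k\in\Gamma+g_{k+1}$. Thus $h_k\in(\Gamma+g_k)\cap(\Gamma+g_{k+1})\subseteq\mathrm{Syz}(\Delta)$. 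For $k=n$ the conditions $h_n\equiv g_n\ (\mathrm{mod}\ \alpha)$, $h_n>g_n$ and $h_n\equiv0\ (\mathrm{mod}\ \beta)$, $h_n\ge0$ place $h_n$ in $(\Gamma+g_n)\cap\Gamma=(\Gamma+g_n)\cap(\Gamma+g_0)\subseteq\mathrm{Syz}(\Delta)$. Since $\mathrm{Syz}(\Delta)$ is a $\Gamma$-semimodule, each membership $h_k\in\mathrm{Syz}(\Delta)$ upgrades to $\Gamma+h_k\subseteq\mathrm{Syz}(\Delta)$, and the whole union is contained in $\mathrm{Syz}(\Delta)$.

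The substantial direction is $\mathrm{Syz}(\Delta)\subseteq\bigcup_k(\Gamma+h_k)$, where the aim is to show that every $\Gamma$-minimal syzygy is one of the corners $h_k$. Fix $x\in(\Gamma+g_i)\cap(\Gamma+g_j)$ with $i<j$. I would reduce everything to residues: writing $x-g_i=p_i\alpha+q_i\beta$ in the normal form with $0\le q_i<\alpha$, membership $x\in\Gamma+g_i$ is equivalent to $p_i\ge0$, and since $g_i\equiv-b_i\beta\ (\mathrm{mod}\ \alpha)$ the residue satisfies $q_i\equiv x\beta^{-1}+b_i\ (\mathrm{mod}\ \alpha)$, where $\beta^{-1}$ is the inverse of $\beta$ modulo $\alpha$; this decreases with $i$ because $b_0>b_1>\cdots>b_n$. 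Fed by the ordering $a_0<\cdots<a_n$, $b_0>\cdots>b_n$ and by leanness $|g_k-g_{k+1}|\notin\Gamma$ (which guarantees that the two legs of each corner meet minimally), this monotonicity should let me locate a consecutive index $k$ with $i\le k<j$ at which the two relevant thresholds first coincide; a short computation with the normal forms of $x-g_k$ and $x-g_{k+1}$ then yields $x-h_k\in\Gamma$, i.e.\ $x\in\Gamma+h_k$. The boundary cases, where the overlap runs off the staircase, are precisely the ones producing the edge corners $h_0$ and $h_n$.

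I expect the covering step of the last paragraph to be the main obstacle: one must verify that \emph{no} minimal syzygy escapes all the $\Gamma+h_k$, and in particular that the non-adjacent overlaps $(\Gamma+g_i)\cap(\Gamma+g_j)$ with $j>i+1$ contribute nothing new — these intersections need not be principal, so their extra generators must be absorbed by the $h_k$ coming from \emph{other} consecutive pairs. The cleanest way I see to discharge this is to check that each such overlap is $\Gamma$-generated by the $h_k$ with $i\le k<j$, using the monotone residue count above to pin down, for a given $x$, the exact corner it dominates, and treating the two boundary corners separately since one leg of each degenerates onto an edge of the triangle. If one prefers to avoid this bookkeeping, the statement is exactly \cite[Theorem~4.3]{MU1}, whose lattice-path proof can be quoted directly.
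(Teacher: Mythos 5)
The paper does not prove this proposition at all: it is stated as a quotation of \cite[Theorem 4.3]{MU1}, so your closing remark (``quote [MU1] directly'') is in fact exactly what the authors do. Judged as a self-contained argument, your easy inclusion $\bigcup_k(\Gamma+h_k)\subseteq\mathrm{Syz}(\Delta)$ is complete and correct: the congruence-plus-inequality conditions do place each $h_k$ in $(\Gamma+g_k)\cap(\Gamma+g_{k+1})$ (and $h_n$ in $(\Gamma+g_n)\cap\Gamma$), and semimodule closure finishes it. The problem is the reverse inclusion, and specifically your proposed localization.

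The claim that each overlap $(\Gamma+g_i)\cap(\Gamma+g_j)$ is $\Gamma$-generated by the $h_k$ with $i\le k<j$ is false, already for consecutive pairs. Take the paper's own example $\Gamma=\langle 5,7\rangle$ with lean set $[0,9,11,8]$, ordered $g_1=9\prec g_2=11\prec g_3=8$. Then $(\Gamma+g_0)\cap(\Gamma+g_1)=\Gamma\cap(\Gamma+9)$ has \emph{two} minimal generators, $14$ and $30$ (note $30-14=16\notin\Gamma$ and $14<30$), so it is not contained in any single $\Gamma+h_0$; the generator not equal to $h_0$ is absorbed only by $\Gamma+h_k$ for a $k$ outside the range $[i,j)$ (here $30\in\Gamma+16$, and $16$ is the syzygy generator attached to the pair $(g_1,g_2)$). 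Structurally, for a lean pair $u,v$ the intersection $(\Gamma+u)\cap(\Gamma+v)$ always has two minimal generators, one for each way of crossing the congruences mod $\alpha$ and mod $\beta$, and only one of the two appears in the list $h_0,\dots,h_n$; the whole content of the theorem is that the discarded generators, as well as the generators of the non-adjacent overlaps, all land in $\bigcup_k(\Gamma+h_k)$ \emph{globally}. So the step ``locate $k$ with $i\le k<j$ and conclude $x-h_k\in\Gamma$'' would fail, and your monotone-residue bookkeeping has to be reorganized around the full family of $h_k$ rather than pair by pair. (You also never establish existence and uniqueness of the $h_k$ satisfying the stated congruences and bounds, which is part of the assertion; this is routine by CRT but should be said.) As it stands, the hard direction is an acknowledged sketch built on a false intermediate claim, so the proposal is not a proof; the honest version of it is the citation you offer in the last sentence.
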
 

In particular, \(J=[h_0,\dots,h_n]\) is a minimal system of generators of the semimodule \(\Delta_J=\mathrm{Syz}(\Delta)\), hence \(h_0\preceq h_1\preceq\cdots\preceq h_n.\) Therefore it is easily seen that the SE-turns of the lattice path associated to \(\Delta\) can be identified with the minimal set of generators of the syzygy module (we call SE-turns to the turns from the \(y\)--direction to the \(x\)--direction). After that, we can associate to any \(\Gamma\)-semimodule \(\Delta\) a lean couple \((I,J)\) where \(I\) is a minimal set of generators of \(\Delta\) and \(J\) a minimal set of generators of \(\mathrm{Syz}(\Delta);\) or equivalently a lattice path. The syzygies allowed us to give a formula for the conductor $c(\Delta)$ of $\Delta$:

\begin{theorem}\cite[Theorem 1]{almiyano}\label{formula}
	Let \(\Delta\) be a \(\Gamma\)-semimodule with $\Gamma$-lean couple \((I,J)\), and let \(M:=\max_{\leq_\mathbb{N}}\{h\in J\}\) denote the biggest, with respect to the order of the natural numbers, minimal generator of the syzygy module of $\Delta$. Then
	\[c(\Delta)=M-\alpha-\beta+1.\]
	In particular, if we denote by \((m_1,m_2)\) the point in the lattice $\mathcal{L}$ representing \(M\), then we have
	\[c(\Delta)=c(\Gamma)-m_1\alpha-m_2\beta.\]
\end{theorem}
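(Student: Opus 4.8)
The plan is to prove the first formula $c(\Delta)=M-\alpha-\beta+1$ and then deduce the second one by a one-line computation. Writing $M=\alpha\beta-m_1\alpha-m_2\beta$ (which is legitimate, since $M\le\alpha\beta$: the interior generators $h_1,\dots,h_{n-1}$ are gaps and $h_0,h_n\le\alpha\beta$), one has
\[
M-\alpha-\beta+1=(\alpha\beta-\alpha-\beta+1)-m_1\alpha-m_2\beta=c(\Gamma)-m_1\alpha-m_2\beta .
\]
Since $c(\Delta)=F(\Delta)+1$ with $F(\Delta):=\max(\mathbb{N}\setminus\Delta)$, the statement reduces to showing that the largest gap of $\Delta$ equals $M-\alpha-\beta$, which I would establish through the two inequalities $F(\Delta)\ge M-\alpha-\beta$ and $F(\Delta)\le M-\alpha-\beta$.

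First I would show $M-\alpha-\beta\notin\Delta$, giving $F(\Delta)\ge M-\alpha-\beta$. By Proposition \ref{defin:syz}, $M$ is a minimal generator of $\mathrm{Syz}(\Delta)$, and the congruences $M\equiv g_i \bmod\alpha$, $M\equiv g_j\bmod\beta$ force the two shifts to be pure, so $M=g_i+s\alpha=g_j+t\beta$ with $i\neq j$ and $s,t\ge 1$. Suppose, for contradiction, that $M-\alpha-\beta=g_k+u\alpha+v\beta\in\Delta$ with $u,v\ge 0$. Then $M-\alpha=g_k+u\alpha+(v+1)\beta\in\Gamma+g_k$ and $M-\beta=g_k+(u+1)\alpha+v\beta\in\Gamma+g_k$, while also $M-\alpha=g_i+(s-1)\alpha\in\Gamma+g_i$ and $M-\beta=g_j+(t-1)\beta\in\Gamma+g_j$. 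If $k\neq i$ then $M-\alpha\in(\Gamma+g_i)\cap(\Gamma+g_k)\subseteq\mathrm{Syz}(\Delta)$, and if $k=i$ then (as $i\neq j$) $M-\beta\in(\Gamma+g_j)\cap(\Gamma+g_k)\subseteq\mathrm{Syz}(\Delta)$; either way $M$ is obtained from an element of $\mathrm{Syz}(\Delta)$ by adding $\alpha$ or $\beta$, contradicting the minimality of $M$. Hence $M-\alpha-\beta$ is a gap of $\Delta$.

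For the reverse inequality I would set $F:=F(\Delta)$ and exploit its maximality: both $F+\alpha$ and $F+\beta$ lie in $\Delta$. I claim $F+\alpha$ admits only representations $F+\alpha=g_i+v\beta$ (no copies of $\alpha$), since a representation with $u\ge 1$ would give $F=g_k+(u-1)\alpha+v\beta\in\Delta$, a contradiction. Moreover the coset $\Gamma+g_i$ containing $F+\alpha$ is unique, because $g_i+v\beta=g_{i'}+v'\beta$ with $i\neq i'$ would force $g_i\in\Gamma+g_{i'}$ or $g_{i'}\in\Gamma+g_i$, contradicting the minimality of the generating system of $\Delta$. Symmetrically $F+\beta=g_j+u'\alpha$ lies in a unique coset. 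Thus $F+\alpha+\beta=g_i+(v+1)\beta=g_j+(u'+1)\alpha$ lies in $(\Gamma+g_i)\cap(\Gamma+g_j)$; once $i\neq j$ is known, this places $F+\alpha+\beta$ in $\mathrm{Syz}(\Delta)$, and since $F+\alpha$ and $F+\beta$ each lie in a single coset they are not in $\mathrm{Syz}(\Delta)$, so $F+\alpha+\beta$ cannot be reduced by $\alpha$ or by $\beta$ within $\mathrm{Syz}(\Delta)$ and is therefore a minimal generator. Consequently $F+\alpha+\beta\le_{\mathbb{N}}M$, i.e. $F\le M-\alpha-\beta$.

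The delicate point, and the step I expect to be the main obstacle, is the distinctness $i\neq j$. If $i=j$, subtracting the two pure representations gives $(v+1)\beta=(u'+1)\alpha$, so $G:=F-g_i$ satisfies $G\notin\Gamma$ but $G+\alpha,G+\beta\in\Gamma$; a short coordinate argument (writing $G=\alpha\beta-a\alpha-b\beta$, one has $G+\alpha\in\Gamma\Leftrightarrow a=1$ and $G+\beta\in\Gamma\Leftrightarrow b=1$) shows that the only such element is the Frobenius number $F(\Gamma)=\alpha\beta-\alpha-\beta$, forcing $F=g_i+F(\Gamma)$. But a direct check then yields $g_i+F(\Gamma)\in\Gamma\subseteq\Delta$, contradicting that $F$ is a gap; this excludes $i=j$ and completes the upper bound. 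Combining the two inequalities gives $F(\Delta)=M-\alpha-\beta$, hence $c(\Delta)=M-\alpha-\beta+1$, and the computation of the first paragraph yields $c(\Delta)=c(\Gamma)-m_1\alpha-m_2\beta$.
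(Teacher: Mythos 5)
This theorem is not proved in the paper at all: it is imported verbatim from \cite[Theorem 1]{almiyano}, so there is no internal argument to compare yours against. Your syzygy-theoretic proof is self-contained and its skeleton is sound: the observation that minimality in a $\langle\alpha,\beta\rangle$-semimodule only needs to be tested against subtraction of $\alpha$ and of $\beta$, the argument that $M-\alpha-\beta\in\Delta$ would contradict the minimality of $M$ in $\mathrm{Syz}(\Delta)$, the uniqueness of the coset containing $F+\alpha$ (resp.\ $F+\beta$), and the conclusion that $F+\alpha+\beta$ is then a minimal generator of $\mathrm{Syz}(\Delta)$, hence at most $M$, are all correct.

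There is, however, one concrete hole, located exactly at the step you flagged as delicate. To exclude $i=j$ you derive $F=g_i+F(\Gamma)$ and assert that ``a direct check yields $g_i+F(\Gamma)\in\Gamma$''. This is true only for $i\geq 1$: there $g_i\geq 1$, so $g_i+F(\Gamma)\geq F(\Gamma)+1=c(\Gamma)$ and membership in $\Gamma$ is automatic. For $i=0$ the claim is false, since $g_0+F(\Gamma)=F(\Gamma)\notin\Gamma$, and the case $i=j=0$ genuinely occurs: for $\Delta=\Gamma$ one has $F(\Delta)=F(\Gamma)$ and both $F+\alpha$ and $F+\beta$ lie in the single coset $\Gamma+g_0$. (The same degenerate case also undercuts your first inequality: for $n=0$ the two representations $M=\alpha\beta=g_0+\beta\cdot\alpha=g_0+\alpha\cdot\beta$ use the same generator, so the hypothesis $i\neq j$ there is unavailable as well.) The patch is short: if $n\geq 1$, the case $i=j=0$ cannot arise, because $\langle\alpha,\beta\rangle$ is symmetric, so every gap $g_1$ satisfies $F(\Gamma)-g_1\in\Gamma$, whence $F(\Gamma)\in\Gamma+g_1\subseteq\Delta$ and $F(\Delta)\neq F(\Gamma)$; and if $n=0$, i.e.\ $\Delta=\Gamma$, then $J=[\alpha\beta]$, $M=\alpha\beta$, and the statement reduces to the classical identity $c(\Gamma)=\alpha\beta-\alpha-\beta+1$. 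With this case added (and $\Delta\neq\mathbb{N}$ assumed so that $F(\Delta)$ exists), your proof is complete.
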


The syzygies lead also to the concept of \emph{fixed point} for a semimodule:

\begin{defin}
An $\langle \alpha,\beta \rangle$-semimodule $\Delta_I$ with associated $\Gamma$-lean couple $(I,J)$ is said to be a $\langle \alpha, \beta\rangle$-fixed point (or simply a fixed point if the semigroup is clear from the context) if the semimodule $(\Delta_J)^{\circ}$ admits $I$ again as a minimal system of generators. 
\end{defin}

The chosen name \emph{fixed point} has a reason: it refers to the orbits of period $1$ of the Picard sequence associated to the map $f=h\circ \mathrm{Syz}$, where $\mathrm{Syz}$ is the map $\Delta_I\mapsto \Delta_J$ and $h$ is the normalization map for $\Delta_J=\mathrm{Syz}(\Delta_I)$; this is further explained in \cite[Sect.~5]{MU1}.


\section{Wilf number of a gap of a numerical semigroup}\label{sec:wilf}

In this section, we are going to make use of the conductor formula of a \(\Gamma\)--semimodule (Theorem \ref{formula}) to associate to a gap a number which will be invariant under certain symmetries of the lattice. This number is motivated by Wilf's conjecture. First, we introduce the \emph{Wilf number} of a $\Gamma$-semimodule:

\begin{defin}
	Let $\Delta$ be a $\Gamma$-semimodule, then the Wilf number of $\Delta$ is defined to be
		$$
	W(\Delta)=e(\Delta)\cdot \delta (\Delta)-c(\Delta).
	$$
	In addition, we define the Wilf number of a gap $g\in \mathbb{N}\setminus\Gamma$ by assigning the Wilf number of the $\Gamma$-semimodule $\Delta=\Delta_I$ minimally generated by $I=[0,g]$:

$$
W(g):=W(\Delta_I)=2\delta(\Delta_{I})-c(\Delta_{I}).
$$
	
	If \(g=\alpha\beta-a\alpha-b\beta\) then we will also denote by \(W(a,b):=W(g)\) its Wilf number.
\end{defin}

Observe that for $\Delta=\Gamma$, Wilf's conjecture claims that $W(\Gamma)\geq 0$, see \cite{mdelgado,wilf}. 
\medskip

Now, we restrict our attention to the case $\Gamma=\langle \alpha, \beta \rangle$. Here $W(g)$  only depends on the gap $g$ as a consequence of the formula for the conductor of a \(\Gamma\)--semimodule given in Theorem \ref{formula}.
\begin{proposition}\label{lem:formuladelta}
Let $I=[g_0=0,g_1,\ldots, g_n]$ be the minimal system of generators of a $\Gamma$-semimodule $\Delta$ ordered as $0\prec g_1\prec \cdots \prec g_n$; recall the writing $g_{i}=\alpha\beta -a_{i}\alpha-b_{i}\beta$, and set $a_0=b_0=0$. Then
\begin{align*}
\delta(\Delta)=&c(\Delta)-\delta(\Gamma)+\sum_{i=0}^{n} (a_{i+1}-a_i)b_{i+1}\\
=& c(\Delta)-\delta(\Gamma)+\sum_{i=0}^{n} (b_{i}-b_{i+1})a_{i+1}.
\end{align*}
Furthermore, for any $I=[0,g_i]$ with $i=1,\ldots , n$, we have
$$
\delta(\Delta_{I})=c(\Delta_{I})-\delta(\Gamma)+a_ib_i.
$$
\end{proposition}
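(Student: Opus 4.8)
The plan is to compute $\delta(\Delta)$ by a gap count and then evaluate that count geometrically in the lattice $\mathcal{L}$; the conductor $c(\Delta)$ will enter only symbolically, so Theorem~\ref{formula} is not needed for this step. First I would record the identity $\delta(\Delta)=c(\Delta)-|\mathbb{N}\setminus\Delta|$: since $c(\Delta)=\max(\mathbb{N}\setminus\Delta)+1$, every gap of $\Delta$ lies in $\{0,1,\dots,c(\Delta)-1\}$, and this interval is the disjoint union of the $\delta(\Delta)$ elements of $\Delta$ below $c(\Delta)$ and the gaps of $\Delta$. Because $g_0=0$ forces $\Gamma\subseteq\Delta$, every gap of $\Delta$ is a gap of $\Gamma$, and a gap of $\Gamma$ is a gap of $\Delta$ precisely when it lies in no translate $\Gamma+g_i$. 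Finally, the involution $x\mapsto c(\Gamma)-1-x$ on $\{0,\dots,c(\Gamma)-1\}$ (valid because $\langle\alpha,\beta\rangle$ is symmetric) gives $\delta(\Gamma)=|\mathbb{N}\setminus\Gamma|$. Hence the task reduces to counting the gaps of $\Gamma$ that are absorbed into $\bigcup_{i=1}^n(\Gamma+g_i)$.

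The heart of the argument, and the step I expect to be the main obstacle, is the local count for a single gap $g=\alpha\beta-a\alpha-b\beta\leftrightarrow(a,b)$: the gaps of $\Gamma$ contained in $\Gamma+g$ are exactly the lattice points $(a',b')$ with $1\le a'\le a$ and $1\le b'\le b$, so there are $ab$ of them. Writing a gap as $x=\alpha\beta-a'\alpha-b'\beta$, one has $x\in\Gamma+g$ iff $x-g=(a-a')\alpha+(b-b')\beta\in\Gamma$. I would exploit the unique representation of any integer as $s\alpha+t\beta$ with $0\le s<\beta$ (together with the fact that such an integer lies in $\Gamma$ iff $t\ge0$): if $a'\le a$ the canonical $s$ is $a-a'$ and membership reduces to $b'\le b$; if $a'>a$ the canonical $s$ is $\beta-(a'-a)$ and the resulting $t=b-b'-\alpha$ is negative because $b,b'\in[1,\alpha-1]$, so $x-g\notin\Gamma$. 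Since the triangle $LG$ of gap-points is closed under decreasing coordinates, each such $(a',b')$ is automatically a gap. This already settles the last displayed formula: for $I=[0,g_i]$ exactly $a_ib_i$ gaps are absorbed, so $\delta(\Delta_I)=c(\Delta_I)-\delta(\Gamma)+a_ib_i$.

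For the general case the absorbed gaps form the union of the rectangles $R_i=\{(a',b'):1\le a'\le a_i,\ 1\le b'\le b_i\}$. Leanness (Corollary~3.3 in \cite{MU1}) gives $a_1<\dots<a_n$ and $b_1>\dots>b_n$, so the $R_i$ stack into a staircase. Counting its lattice points column by column — over the strip $a_{i-1}<a'\le a_i$ the tallest covering rectangle has height $b_i$ — produces $\sum_{i=1}^n(a_i-a_{i-1})b_i$, which under the conventions $a_0=b_0=0$ and $b_{n+1}=0$ is the first sum $\sum_{i=0}^n(a_{i+1}-a_i)b_{i+1}$; counting the same staircase row by row, or equivalently applying summation by parts, yields the second expression. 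Substituting this count for $|\mathbb{N}\setminus\Gamma|-|\mathbb{N}\setminus\Delta|$ in $\delta(\Delta)=c(\Delta)-|\mathbb{N}\setminus\Delta|$ and using $\delta(\Gamma)=|\mathbb{N}\setminus\Gamma|$ finishes the proof.
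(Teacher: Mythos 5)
Your proposal is correct and follows essentially the same route as the paper: the paper's proof is a one-line appeal to the lattice path representation of $\Delta_I$, and your argument — reducing $\delta(\Delta)$ to counting the gaps of $\Gamma$ absorbed into $\bigcup_i(\Gamma+g_i)$, showing via the canonical representation $s\alpha+t\beta$ with $0\le s<\beta$ that $\Gamma+g_i$ absorbs exactly the $a_ib_i$ gaps in the rectangle $[1,a_i]\times[1,b_i]$, and then counting the resulting staircase in two ways — is precisely the detailed justification of that appeal. The only caveat is notational: the paper's stated conventions ($a_0=b_0=0$, no explicit $a_{n+1},b_{n+1}$) make its two displayed sums literally ambiguous at the boundary indices, and your explicit choice $b_{n+1}=0$ together with the column-by-column count is the reading that makes the formula come out right.
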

\begin{proof}
	Since \(\delta(\Gamma)=|\mathbb{N}\setminus \Gamma|\), it is easily deduced from the lattice path representation of \(\Delta_{I}\) that 
	\[
	|\mathbb{N}\setminus\Delta_{I}|=\delta(\Gamma)-\sum_{i=0}^{n} (a_{i+1}-a_i)b_{i+1}=\sum_{i=0}^{n} (b_{i}-b_{i+1})a_{i+1}.
	\]
	By the definition of \(\delta(\Delta_{I})\) we have the claim.
\end{proof}

Therefore, we can compute explicitly the Wilf number of a gap of $\langle \alpha, \beta \rangle$:

\begin{proposition}\label{prop:wilfgap}
	Let \(g=\alpha\beta-a\alpha-b\beta\) be a gap of $\Gamma$. Let us denote by \([h_0,h_1]\) the minimal system of generators of the $\Gamma$-semimodule \(\mathrm{Syz}(\Delta_{[0,g]})\). Then
	\[-W(g)=\Big\{\begin{array}{cc}
	a\alpha-2ab&\text{if}\;\;\;\mathrm{min}\{h_0,h_1\}=\alpha\beta-b\beta,\\
	b\beta-2ab&\text{if}\;\;\;\mathrm{min}\{h_0,h_1\}=\alpha\beta-a\alpha.
	\end{array}\]
\end{proposition}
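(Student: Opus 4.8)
The plan is to reduce everything to the conductor formula of Theorem \ref{formula} applied to the two-generator semimodule $\Delta_I$ with $I=[0,g]$, and then to split into the two stated cases according to which syzygy generator realizes the maximum $M$.

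First I would rewrite the Wilf number in terms of the conductor. By definition $W(g)=2\delta(\Delta_I)-c(\Delta_I)$, and Proposition \ref{lem:formuladelta} (applied with $n=1$ and $g_1=g$) gives $\delta(\Delta_I)=c(\Delta_I)-\delta(\Gamma)+ab$. Substituting,
\[
W(g)=2\bigl(c(\Delta_I)-\delta(\Gamma)+ab\bigr)-c(\Delta_I)=c(\Delta_I)-2\delta(\Gamma)+2ab.
\]
Since $\Gamma=\langle\alpha,\beta\rangle$ is symmetric, $\delta(\Gamma)=|\mathbb{N}\setminus\Gamma|=c(\Gamma)/2$, so that $2\delta(\Gamma)=c(\Gamma)$ and hence $W(g)=c(\Delta_I)-c(\Gamma)+2ab$. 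Thus the whole problem reduces to computing $c(\Delta_I)$.

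Next I would determine the two syzygy generators $h_0,h_1$ of $\mathrm{Syz}(\Delta_{[0,g]})$ from the congruences of Proposition \ref{defin:syz} with $n=1$, $g_0=0$ and $g_1=g=\alpha\beta-a\alpha-b\beta$. The conditions $h_0\equiv 0\ (\mathrm{mod}\ \alpha)$ and $h_0\equiv g\ (\mathrm{mod}\ \beta)$ force, via $\gcd(\alpha,\beta)=1$, the lattice coordinates $(a,0)$, i.e. $h_0=\alpha\beta-a\alpha$; symmetrically $h_1\equiv g\ (\mathrm{mod}\ \alpha)$ and $h_1\equiv 0\ (\mathrm{mod}\ \beta)$ force $(0,b)$, i.e. $h_1=\alpha\beta-b\beta$. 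The inequalities $h_0>g$ and $h_1>g$ required by Proposition \ref{defin:syz} hold automatically since $h_0-g=b\beta>0$ and $h_1-g=a\alpha>0$.

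Finally I would apply Theorem \ref{formula}, with $M=\max_{\leq_\mathbb{N}}\{h_0,h_1\}$. The larger natural number among $\alpha\beta-a\alpha$ and $\alpha\beta-b\beta$ is precisely the one with the smaller lattice weight $m_1\alpha+m_2\beta$, so if $\min\{h_0,h_1\}=\alpha\beta-b\beta$ then $a\alpha<b\beta$, $M=\alpha\beta-a\alpha$ has coordinates $(a,0)$, $c(\Delta_I)=c(\Gamma)-a\alpha$, and therefore $-W(g)=a\alpha-2ab$; whereas if $\min\{h_0,h_1\}=\alpha\beta-a\alpha$ then $a\alpha>b\beta$, $M=\alpha\beta-b\beta$ has coordinates $(0,b)$, $c(\Delta_I)=c(\Gamma)-b\beta$, and $-W(g)=b\beta-2ab$. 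The two cases are exhaustive and disjoint because $a\alpha=b\beta$ is impossible for a gap, as $\gcd(\alpha,\beta)=1$ together with $0<b<\alpha$ would be contradicted. The only delicate point is keeping the bookkeeping straight between the natural-number order and the lattice representation, since a larger gap corresponds to a \emph{smaller} weight $m_1\alpha+m_2\beta$; this sign reversal is where an error would most easily slip in.
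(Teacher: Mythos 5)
Your proof is correct and follows essentially the same route as the paper's: express $\delta(\Delta_{[0,g]})$ via Proposition \ref{lem:formuladelta}, identify the two syzygy generators as $\alpha\beta-a\alpha$ and $\alpha\beta-b\beta$, and feed the maximal one into Theorem \ref{formula} to get $c(\Delta_{[0,g]})$ and hence $W(g)=c(\Delta_{[0,g]})-c(\Gamma)+2ab$. The only cosmetic differences are that you derive the syzygy generators from the congruences of Proposition \ref{defin:syz} (with the labels of $h_0$ and $h_1$ swapped relative to the paper, which is immaterial since only $\min$ and $\max$ enter) instead of reading them off the lattice path, and you add the harmless extra observation that $a\alpha=b\beta$ cannot occur.
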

\begin{proof}
	Consider the \(\Gamma\)--semimodule $\Delta_I$ generated by \(I=[0,g]\). From the representation of the lattice path we know that \(h_0=\alpha\beta-b\beta\) and \(h_1=\alpha\beta-a\alpha\). Let us first assume \(\mathrm{min}\{h_0,h_1\}=h_0\), thus \(\mathrm{max}\{h_0,h_1\}=h_1=\alpha\beta-a\alpha\). Therefore, by Theorem \ref{formula} we have
	\[c(\Delta_{I})=\alpha\beta-a\alpha-\alpha-\beta+1.\]
	Proposition \ref{lem:formuladelta} yields
	\[c(\Delta_{I})-2\delta(\Delta_{I})=-c(\Delta_{I})+2\delta(\Gamma)-2ab=c(\Gamma)-c(\Delta_{I})-2ab=a\alpha-2ab.\]
	The same reasoning applies after exchanging the roles of \(a\) and \(b\) to obtain the second case of the formula.
\end{proof}

We are interested in the case in which the Wilf number of a gap is zero:

\begin{theorem}\label{thm:equifix}
	Let $\Gamma=\langle \alpha , \beta \rangle$ and let $g=\alpha\beta - a \alpha - b \beta$ be a gap of $\Gamma$. Consider the $\Gamma$-semimodule $\Delta=\Delta_I$ minimally generated by $I=[0,g]$. The following statements are equivalent:
	\begin{enumerate}
		\item $W(g)=0$;
		\item either $\alpha=2b$ or $\beta=2a$;
		\item $\Delta$ is a fixed point;
		\item $\Delta$ is selfdual;
		\item $\Delta$ is symmetric, i.e. for every $x\in \Delta$ we have that $c(\Delta)-1-x\notin \Delta$ if and only if $x\in \Delta$.
	\end{enumerate}
\end{theorem}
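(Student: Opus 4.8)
The plan is to take statement (2) as the hub and prove the four equivalences (1)$\Leftrightarrow$(2), (2)$\Leftrightarrow$(3), (2)$\Leftrightarrow$(4) and (4)$\Leftrightarrow$(5). Everything rests on one bookkeeping observation for the two generators $g_0=0,\ g_1=g$. By Proposition~\ref{defin:syz} the syzygy module $\Delta_J=\mathrm{Syz}(\Delta)$ has minimal generators $h_0=\alpha\beta-b\beta$ and $h_1=\alpha\beta-a\alpha$, so $|h_0-h_1|=|a\alpha-b\beta|$; and by \eqref{eqn:dual} the dual is $\Delta^{\ast}=(\Gamma+a\alpha)\cup(\Gamma+b\beta)$, again with its two generators differing by $|a\alpha-b\beta|$. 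Hence both $(\Delta_J)^{\circ}$ and $(\Delta^{\ast})^{\circ}$ are the normalized semimodule minimally generated by $[0,\,|a\alpha-b\beta|]$. The elementary lemma I would isolate is that $|a\alpha-b\beta|=g$ if and only if $\alpha=2b$ or $\beta=2a$. For the forward direction one splits according to the sign of $a\alpha-b\beta$ (note $a\alpha=b\beta$ is impossible, since $\gcd(\alpha,\beta)=1$ with $0<a<\beta$ and $0<b<\alpha$), obtaining $2b\beta=\alpha\beta$ or $2a\alpha=\alpha\beta$. The converse uses positivity of the gap: $\alpha=2b$ forces $g=b\beta-a\alpha>0$, hence $a\alpha<b\beta$, which is exactly the branch where the identity holds, and symmetrically for $\beta=2a$.

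Granting this lemma, (2)$\Leftrightarrow$(3) and (2)$\Leftrightarrow$(4) are immediate from uniqueness of minimal generators: $\Delta$ is a fixed point exactly when $(\Delta_J)^{\circ}$ admits $I=[0,g]$ as minimal system, i.e. when $|a\alpha-b\beta|=g$; and $\Delta$ is selfdual, i.e. $\Delta\cong\Delta^{\ast}$ (equivalently $(\Delta^{\ast})^{\circ}=\Delta$), under the same condition. For (1)$\Leftrightarrow$(2) I would combine Theorem~\ref{formula} with Proposition~\ref{lem:formuladelta}: since $M=\alpha\beta-\min\{a\alpha,b\beta\}$ one gets $c(\Delta)=c(\Gamma)-\min\{a\alpha,b\beta\}$, and feeding $\delta(\Delta)=c(\Delta)-\delta(\Gamma)+ab$ together with $2\delta(\Gamma)=c(\Gamma)$ into $W(g)=2\delta(\Delta)-c(\Delta)$ yields the compact formula
\[
W(g)=2ab-\min\{a\alpha,b\beta\}.
\]
Thus $W(g)=0$ iff $\min\{a\alpha,b\beta\}=2ab$, which is again precisely $\alpha=2b$ or $\beta=2a$; the same conclusion can also be read directly off Proposition~\ref{prop:wilfgap} using $a,b>0$ and the positivity argument above.

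The step I expect to be the main obstacle is (4)$\Leftrightarrow$(5), since it must bridge the purely set-theoretic symmetry of $\Delta$ and its module-theoretic self-duality. The key input is that $\Gamma=\langle\alpha,\beta\rangle$ is symmetric, so its canonical ideal is $\Gamma$ itself; writing $f_{\Gamma}=c(\Gamma)-1$ for the Frobenius number, this gives the duality formula
\[
\Delta^{\ast}=\{x\in\mathbb{N}:\;f_{\Gamma}-x\notin\Delta\},
\]
which I would prove by unwinding $x+\Delta\subseteq\Gamma\Leftrightarrow\forall\,d\in\Delta:(f_{\Gamma}-x)-d\notin\Gamma$ (using symmetry of $\Gamma$) and then $f_{\Gamma}-x\notin\bigcup_{d\in\Delta}(d+\Gamma)=\Delta$. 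Now condition (5) says exactly that $\Delta=\{x:f(\Delta)-x\notin\Delta\}$ with $f(\Delta)=c(\Delta)-1$; reindexing by the shift $f(\Delta)-f_{\Gamma}$ identifies this set with $(f(\Delta)-f_{\Gamma})+\Delta^{\ast}$. A one-line computation from $c(\Delta)=c(\Gamma)-\min\{a\alpha,b\beta\}$ gives $f(\Delta)-f_{\Gamma}=-\min\{a\alpha,b\beta\}=-\min\Delta^{\ast}$, so the shift normalizes the dual and the set equals $(\Delta^{\ast})^{\circ}$. Hence (5) is equivalent to $\Delta=(\Delta^{\ast})^{\circ}$, i.e. to (4), closing the chain and explaining conceptually why the combinatorial symmetry, the self-duality, the fixed-point property and the vanishing of the Wilf number all coincide.
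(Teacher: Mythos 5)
Your proposal is correct, but it follows a genuinely different route from the paper on two of the four equivalences. For $(1)\Leftrightarrow(2)$ and $(2)\Leftrightarrow(3)$ you and the paper essentially coincide: both reduce to the observation that $\mathrm{Syz}(\Delta_{[0,g]})$ normalizes to $\Delta_{[0,|a\alpha-b\beta|]}$ and that $|a\alpha-b\beta|=g$ exactly when $\alpha=2b$ or $\beta=2a$ (your isolated lemma, including the remark that $a\alpha=b\beta$ is impossible, is exactly the content the paper leaves implicit in its ``it is easily seen''). The divergence is in the remaining links. For selfduality the paper proves $(3)\Leftrightarrow(4)$ indirectly: it cites a counting result (the number of fixed points of the form $[0,g]$ equals the number of selfdual ones, via \cite[Theorem 5.5]{MU1} and \cite{MU2}) together with the implication ``selfdual $\Rightarrow$ fixed point'', and concludes by cardinality. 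You instead prove $(2)\Leftrightarrow(4)$ directly by reading off from \eqref{eqn:dual} that $(\Delta^{\ast})^{\circ}=\Delta_{[0,|a\alpha-b\beta|]}$ and invoking uniqueness of minimal generating sets; this is more elementary, avoids the external counting results, and makes transparent that the same lemma governs both the syzygy and the dual. (Note that, as in the paper's own proof, ``selfdual'' must be read as $\Delta\cong\Delta^{\ast}$, i.e.\ $(\Delta^{\ast})^{\circ}=\Delta$, since literal equality never holds for a normalized $\Delta\supsetneq\Gamma$.) For $(4)\Leftrightarrow(5)$ the paper simply cites \cite{tozzo} and \cite{moyano}, whereas you give a self-contained derivation of the duality formula $\Delta^{\ast}=\{x\in\mathbb{N}: f_{\Gamma}-x\notin\Delta\}$ from the symmetry of $\Gamma=\langle\alpha,\beta\rangle$ and then match the normalizing shift $f(\Delta)-f_{\Gamma}=-\min\Delta^{\ast}$; this computation is correct and buys independence from the cited references at the cost of a longer argument. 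Your compact formula $W(g)=2ab-\min\{a\alpha,b\beta\}$ is consistent with Proposition \ref{prop:wilfgap}, and the case analysis closing $(1)\Leftrightarrow(2)$ is sound.
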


\begin{proof}
	$(1) \Longleftrightarrow (2)$ is obvious by Proposition \ref{prop:wilfgap}.\\
	$(2) \Longleftrightarrow (3)$: If $\alpha=2b$, then $g=\alpha\beta-a\alpha-b\beta=b\beta-a\alpha=h_1-h_0$, and this is positive since $g$ is a gap, hence $\mathrm{Syz}(\Delta_{[0,g]})=\Delta_{[h_0,h_1]}=\Delta_{[0,h_1-h_0]}=\Delta_{[0,g]}$. \emph{Mutatis mutandis}, if $\beta=2a$, then $\Delta$ is a fixed point. Conversely, assume without loss of generality that $h_0<h_1$; since $\alpha\beta-a\alpha-b\beta= g=h_1-h_0=b\beta-a\alpha$, it is easily seen that $\alpha=2b$.\\
	$(3) \Longleftrightarrow (4)$: First we observe that the number of semimodules $\Delta_I$ with $I=[0,g]$ which are fixed points coincide with the number of selfdual modules of that form, as a direct application of Theorem 5.5 in \cite[Theorem 5.5]{MU1}, as well as Proposition 4.1 and Theorem 4.4 in \cite{MU2}. Moreover, every selfdual semimodule is a fixed point: For $\Delta=\Delta_I$ with $I=[0,g]$, eq.~(\ref{eqn:dual}) implies that $\Delta^{\ast}$ is minimally generated by $[a_1\alpha,b_1\beta]$; the selfduality implies that $a_1=0$ and $b_1\beta=\alpha\beta-b_1\beta$, hence $I=[0,\alpha\beta-b_1\beta]$. On the other hand, $\mathrm{Syz}(\Delta_I)$ is minimally generated by $[a_1\alpha,b_1\beta]=[0,\alpha\beta-b_1\beta]$, that equals its own normalization. Therefore $\Delta_I$ coincides with the normalization of $\mathrm{Syz}(\Delta_I)$ and $\Delta_I$ is a fixed point.\\
	$(4) \Longleftrightarrow (5)$: this is a consequence of Proposition 4 in \cite{tozzo}, and Theorem 2.11 together with Proposition 3.8 in \cite{moyano}.
\end{proof}

For $\langle \alpha, \beta \rangle$-semimodules $\Delta$ with $\mathrm{ed}(\Delta)>2$, Theorem \ref{thm:equifix} is no longer true: for instance the $\langle 5,8\rangle$-semimodule minimally generated by the lean set $I=[0, 4, 6, 7]$ has Wilf number $W(\Delta_I)=4\delta(\Delta_I)-c(\Delta_I)=4\cdot 1 -4=0$ and it is not a fixed point. Moreover, let us consider the numerical semigroup $\Gamma=\langle 10,14,27\rangle$. This semigroup is both symmetric and complete intersection, however if we consider the \(\Gamma\)--semimodule generated by \(I=[0,9]\) then \(W(\Delta_{I})=0\) and \(I=[0,9]\) is neither a fixed point nor symmetric. On the other hand, if we consider $\Gamma=\langle 10,14,29\rangle$ every \(\Gamma\)--semimodule with Wilf number equal to zero is a fixed point. 

Therefore, we cannot expect a generalization of Theorem \ref{thm:equifix} for numerical semigroups \(\Gamma\) with more than two minimal generators just by imposing the condition of symmetric or complete intersection. This encourages us to propose the following question.



\begin{question}\label{quest:zerosym}
Given a numerical semigroup $\Gamma$ with $\mathrm{ed}(\Gamma)>2$, does there exist a class of numerical semigroups for which any of the equivalences of Theorem \ref{thm:equifix} remain true?
\end{question}

\section{Supersymmetry of the gap set with respect to the Wilf number}\label{sec:supersym}

This section is devoted to introduce and to develop the concept of supersymmetric gaps. This notion is based on certain symmetries encoded in the Wilf number of a gap. Before introducing those concepts, let us see some properties of the image \(LG\) of the map $\alpha\beta -a\alpha-b\beta \mapsto (a,b)$, cf. Sect. \ref{sec:dos}.
\medskip

\subsection{Determinacy of the semigroup}
A first observation is that a \(\Gamma\)--semimodule generated by \([0,g]\) has its syzygy module with two minimal generators. Moreover, by the formulas for the minimal set of generators of the syzygy module the following lemma is a straightforward computation.
\begin{lemma}\label{lem:minsyz}
	Let \(g=\alpha\beta-a\alpha-b\beta \) be a gap of \(\Gamma=\langle\alpha,\beta\rangle\). Let \([h_0,h_1]\) be the minimal set of generators of \(\mathrm{Syz}(\Delta_{[0,g]})\). Then,
	\begin{enumerate}
		\item If \(b>\lfloor\frac{\alpha}{2}\rfloor\) and \(a\leq\lfloor\frac{\beta}{2}\rfloor\) then \(\mathrm{min}\{h_0,h_1\}=\alpha\beta-b\beta.\)
		\item If \(b\leq\lfloor\frac{\alpha}{2}\rfloor\) and \(a>\lfloor\frac{\beta}{2}\rfloor\) then \(\mathrm{min}\{h_0,h_1\}=\alpha\beta-a\alpha.\)
	\end{enumerate}
\end{lemma}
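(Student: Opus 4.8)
The plan is to reduce everything to a single numerical comparison between $a\alpha$ and $b\beta$. First I would recall, exactly as in the proof of Proposition~\ref{prop:wilfgap}, that the lattice-path (equivalently, syzygy) description of $\Delta_{[0,g]}$ forces the two minimal generators of $\mathrm{Syz}(\Delta_{[0,g]})$ to be
\[
h_0=\alpha\beta-b\beta \qquad\text{and}\qquad h_1=\alpha\beta-a\alpha.
\]
Since $h_0-h_1=a\alpha-b\beta$, the entire statement reduces to the clean equivalence $h_0\le h_1 \iff a\alpha\le b\beta$, so it suffices to decide the sign of $a\alpha-b\beta$ under each set of hypotheses.

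Next I would translate the floor conditions into inequalities against the midpoint $\tfrac{\alpha\beta}{2}$. For integers one has $b>\lfloor\alpha/2\rfloor \iff 2b>\alpha$ and $a\le\lfloor\beta/2\rfloor \iff 2a\le\beta$, which is checked by separating the even and odd parities of $\alpha$ and $\beta$ (routine). Multiplying the first by $\beta$ and the second by $\alpha$ gives $b\beta>\tfrac{\alpha\beta}{2}$ and $a\alpha\le\tfrac{\alpha\beta}{2}$ respectively. Thus in case~(1) we chain $a\alpha\le\tfrac{\alpha\beta}{2}<b\beta$, obtaining $a\alpha<b\beta$ and hence $h_0<h_1$, so $\min\{h_0,h_1\}=h_0=\alpha\beta-b\beta$. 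Case~(2) is the mirror image, interchanging the roles of $a,b$ and of $\alpha,\beta$: its hypotheses yield $b\beta\le\tfrac{\alpha\beta}{2}<a\alpha$, whence $h_1<h_0$ and $\min\{h_0,h_1\}=h_1=\alpha\beta-a\alpha$.

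There is no genuine obstacle here; the argument is a direct computation once the generators $h_0,h_1$ are written in closed form. The only points needing a little care are the parity bookkeeping when passing from the floor conditions to $2b>\alpha$ and $2a\le\beta$, and checking that the resulting combined inequality is strict (it is, since the bound on $b\beta$ is strict while the bound on $a\alpha$ is not, so the two generators are forced to be distinct, consistent with minimality). I note that the gap condition $a\alpha+b\beta<\alpha\beta$ is not used beyond ensuring that $g$ is genuinely a gap, so that $h_0$ and $h_1$ take the stated values.
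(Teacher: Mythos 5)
Your argument is correct and is exactly the ``straightforward computation'' the paper alludes to: it identifies the two syzygy generators as $\alpha\beta-a\alpha$ and $\alpha\beta-b\beta$ via the lattice-path description and then compares $a\alpha$ with $b\beta$ using $2b>\alpha$, $2a\le\beta$ (resp.\ the mirrored inequalities). The parity bookkeeping and the strictness of the combined inequality are handled correctly, so nothing is missing.
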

This lemma allows us to describe the behavior of the set of gaps with respect to the Wilf number. First, observe that any integral point inside the triangle delimited by the \(y\)--axis, the line \(y=\lfloor\frac{\alpha}{2}\rfloor\) and the diagonal \(\alpha\beta=x\alpha+y\beta\) represents a gap \(g\) of \(\Gamma\) with expression \(g=\alpha\beta-a\alpha-b\beta\) and \(b>\lfloor\frac{\alpha}{2}\rfloor\). Hence this gap has Wilf number \(-W(g)=a\alpha-2ab\). Now, let us consider the symmetric point to \(g\) with respect to the reflection along the line \(y=\lfloor\frac{\alpha}{2}\rfloor\). This reflection is given by the map \((a,b)\mapsto(a,\alpha-b)\). Therefore, we have 
\begin{lemma}\label{lem:sym1}
	If \((a,b)\) is an integral point inside the triangle delimited by the \(y\)--axis, the line \(y=\lfloor\frac{\alpha}{2}\rfloor\) and the diagonal \(\alpha\beta=x\alpha+y\beta\) then
	
	\[
	-W(a,b)=W(a,\alpha-b).
	\]
\end{lemma}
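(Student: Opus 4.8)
The plan is to reduce everything to the explicit formula for $-W$ in Proposition \ref{prop:wilfgap} and to verify that both the given point $(a,b)$ and its reflection $(a,\alpha-b)$ fall into the \emph{same} branch of that formula, namely the branch where $\min\{h_0,h_1\}=\alpha\beta-b\beta$. Once this is established, the identity $-W(a,b)=W(a,\alpha-b)$ becomes a one-line algebraic substitution.

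First I would record the geometric content of the hypothesis. The point $(a,b)$ lies strictly above the line $y=\lfloor\alpha/2\rfloor$, so $b>\lfloor\alpha/2\rfloor$; in both parities of $\alpha$ this forces $2b>\alpha$, hence $\alpha-b<b$. Lying inside the triangle means $(a,b)$ represents a genuine gap, so $a\alpha+b\beta<\alpha\beta$, equivalently $a\alpha<(\alpha-b)\beta$. Combining with $\alpha-b<b$ gives $a\alpha<(\alpha-b)\beta<b\beta$. Therefore, by Lemma \ref{lem:minsyz}(1) (or directly by comparing $h_0=\alpha\beta-b\beta$ with $h_1=\alpha\beta-a\alpha$), the minimum is $\alpha\beta-b\beta$, and Proposition \ref{prop:wilfgap} yields $-W(a,b)=a\alpha-2ab$.

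Next I would treat the reflected point $(a,\alpha-b)$. The inequality $a\alpha<(\alpha-b)\beta$ derived above says precisely that $(a,\alpha-b)$ is again a gap, and moreover that it satisfies the branch condition $a\alpha<(\alpha-b)\beta$, so that $\min\{h_0,h_1\}=\alpha\beta-(\alpha-b)\beta$ for this point as well. Hence Proposition \ref{prop:wilfgap} applies once more with $b$ replaced by $\alpha-b$, giving $-W(a,\alpha-b)=a\alpha-2a(\alpha-b)=-(a\alpha-2ab)$. Comparing the two expressions yields $W(a,\alpha-b)=a\alpha-2ab=-W(a,b)$, which is the claim.

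The step I expect to be most delicate is the branch bookkeeping for the reflected point. Since $\alpha-b\le\lfloor\alpha/2\rfloor$, the point $(a,\alpha-b)$ sits in the lower region, where a priori Lemma \ref{lem:minsyz}(2) might place it in the \emph{other} branch whenever $a>\lfloor\beta/2\rfloor$. The key observation that rules this out is that the gap inequality forces $a\alpha<(\alpha-b)\beta\le(\alpha/2)\beta$, so in fact $a<\beta/2$ and the second branch never triggers. Making this automatic---rather than invoking $a\le\lfloor\beta/2\rfloor$ as an extra hypothesis---is the one point that needs care; everything else is a direct substitution into the formula of Proposition \ref{prop:wilfgap}.
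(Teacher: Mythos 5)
Your proof is correct and follows essentially the same route as the paper: both compute $-W(a,b)=a\alpha-2ab$ from the first branch of Proposition \ref{prop:wilfgap} and then verify that the reflected point lands in the same branch --- your inequality $a\alpha<(\alpha-b)\beta$ is exactly the paper's computation $h_1'-h_0'=g>0$ --- before substituting $b\mapsto\alpha-b$. (A cosmetic quibble only: $a\alpha<(\alpha-b)\beta$ is the branch condition for $(a,\alpha-b)$ and is strictly stronger than, not ``precisely'', the condition that it is a gap, which is $a\alpha<b\beta$; the implication you need still holds.)
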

\begin{proof}
	By Lemma \ref{lem:minsyz} and Proposition \ref{prop:wilfgap} we have that \(-W(a,b)=a\alpha-2ab\). Now, let us denote \(g_{\mathrm{sym}}=\alpha\beta-a\alpha-(\alpha-b)\beta\) the symmetric gap with respect to the reflection \((a,b)\mapsto(a,\alpha-b)\). Let us consider the minimal set of generators \([h'_{0},h'_{1}]\) of \(\mathrm{Syz}(\Delta_{[0,g_{\mathrm{sym}}]})\). It is thus clear that \(\mathrm{min}\{h'_{0},h'_{1}\}=\alpha\beta-b\beta\), since
	
	 \[h'_{1}-h'_{0}=\alpha\beta-a\alpha-\alpha\beta+(\alpha-b)\beta=\alpha\beta-a\alpha-b\beta=g>0,\]
	and the proof follows.
\end{proof}

An analogous situation occurs when considering the triangle delimited by the \(x\)--axis, the line \(x=\lfloor\frac{\beta}{2}\rfloor\) and the diagonal \(\alpha\beta=x\alpha+y\beta\). In this case, the map \((a,b)\mapsto(\beta-a,b)\) yields the following result:
\begin{lemma}\label{lem:sym2}
	If \((a,b)\) is an integral point inside the triangle delimited by the \(x\)--axis, the line \(x=\lfloor\frac{\beta}{2}\rfloor\) and the diagonal \(\alpha\beta=x\alpha+y\beta\), then

	\[
	-W(a,b)=W(\beta-a,b).
	\]
\end{lemma}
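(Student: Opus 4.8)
The plan is to mimic the proof of Lemma \ref{lem:sym1}, exploiting the symmetry between the roles of $\alpha$ and $\beta$ and between $a$ and $b$. I first observe that the hypothesis places $(a,b)$ in the triangle bounded by the $x$-axis, the line $x=\lfloor\frac{\beta}{2}\rfloor$, and the diagonal; this means $a>\lfloor\frac{\beta}{2}\rfloor$ together with $b\leq\lfloor\frac{\alpha}{2}\rfloor$, which is precisely case (2) of Lemma \ref{lem:minsyz}. Hence $\mathrm{min}\{h_0,h_1\}=\alpha\beta-a\alpha$, and Proposition \ref{prop:wilfgap} gives $-W(a,b)=b\beta-2ab$.

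Next I would track the effect of the reflection $(a,b)\mapsto(\beta-a,b)$. Writing $g_{\mathrm{sym}}=\alpha\beta-(\beta-a)\alpha-b\beta$ for the reflected gap, I form the minimal system of generators $[h'_0,h'_1]$ of $\mathrm{Syz}(\Delta_{[0,g_{\mathrm{sym}}]})$ and compute the difference $h'_1-h'_0$ exactly as in Lemma \ref{lem:sym1}. The key calculation is
\[
h'_1-h'_0=\alpha\beta-(\beta-a)\alpha-\alpha\beta+a\alpha=\ldots=g>0,
\]
so that the appropriate generator, namely $\alpha\beta-a\alpha$, is the minimum of $\{h'_0,h'_1\}$; this puts the reflected gap in the first case of Proposition \ref{prop:wilfgap}. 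Applying that proposition to $g_{\mathrm{sym}}$ (with the coordinate $a$ replaced by $\beta-a$ and $b$ unchanged) yields $-W(\beta-a,b)=(\beta-a)\alpha-2(\beta-a)b$.

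Finally I would verify the sign relation $-W(a,b)=W(\beta-a,b)$ by direct comparison: from the two formulas, $-W(a,b)=b\beta-2ab$ and $-W(\beta-a,b)=(\beta-a)\alpha-2(\beta-a)b$, and I expect these to be negatives of each other after simplification on the reflection line, exactly paralleling the computation underlying Lemma \ref{lem:sym1}. The main obstacle—though it is mild—is bookkeeping the roles of $a$ and $b$ correctly: because the two cases of Lemma \ref{lem:minsyz} and of Proposition \ref{prop:wilfgap} are not fully symmetric in $\alpha,\beta$ (one involves $a\alpha-2ab$, the other $b\beta-2ab$), I must be careful to invoke the case matching the reflected gap rather than mechanically swapping letters. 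Once the correct case is identified, the argument is a straightforward transcription of the proof of Lemma \ref{lem:sym1} with $x$- and $y$-directions interchanged.
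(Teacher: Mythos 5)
Your overall strategy is the right one---the paper gives no separate proof of Lemma \ref{lem:sym2} and simply declares it analogous to Lemma \ref{lem:sym1}, which is exactly the route you take---and your first step is correct: the hypothesis puts $(a,b)$ in case (2) of Lemma \ref{lem:minsyz}, so $-W(a,b)=b\beta-2ab$. But your treatment of the reflected gap contains a genuine error. For $g_{\mathrm{sym}}=\alpha\beta-(\beta-a)\alpha-b\beta$ the two syzygy generators are $h_0'=\alpha\beta-b\beta$ and $h_1'=\alpha\beta-(\beta-a)\alpha$; the quantity $\alpha\beta-a\alpha$ that you name as the minimum is not one of them, and your displayed difference $\alpha\beta-(\beta-a)\alpha-\alpha\beta+a\alpha$ equals $2a\alpha-\alpha\beta$, not $g$. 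The correct computation is $h_0'-h_1'=(\beta-a)\alpha-b\beta=\alpha\beta-a\alpha-b\beta=g>0$, so the minimum is $h_1'=\alpha\beta-(\beta-a)\alpha$, i.e.\ the generator of the form $\alpha\beta-a'\alpha$ with $a'=\beta-a$. That is the \emph{second} case of Proposition \ref{prop:wilfgap}, not the first as you assert.

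This is not just a bookkeeping slip: invoking the first case gives you $-W(\beta-a,b)=(\beta-a)\alpha-2(\beta-a)b$, and the final verification you propose would fail, since $\bigl((\beta-a)\alpha-2(\beta-a)b\bigr)+\bigl(b\beta-2ab\bigr)=\alpha\beta-a\alpha-b\beta=g\neq 0$, so the two expressions are not negatives of each other. Using the correct (second) case instead yields $-W(\beta-a,b)=b\beta-2(\beta-a)b=-(b\beta-2ab)=W(a,b)$, which is the assertion. Ironically, you flag precisely this pitfall in your closing paragraph---that one must invoke the case matching the reflected gap rather than mechanically transposing letters---but the body of your argument falls into it. With the case corrected, the proof goes through and coincides with the intended analogue of the proof of Lemma \ref{lem:sym1}.
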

In particular, the set of fixed points of each of the previous symmetries is exactly the set of points with \(W(a,b)=0\). As we have seen in Theorem \ref{thm:equifix} those are exactly fixed points of the orbits of the associated lattice path, i.e. of the associated semimodule.
\medskip

The previous discussion leads to the following definition:
\begin{defin}\label{def:supersym}
	Let \(\Gamma=\langle\alpha,\beta\rangle\) be a numerical semigroup.  Let us denote by \(\mathcal{T}_{r}\) the set of points of \(\mathcal{L}\) inside (and not in the border of) the triangle delimited by the \(x\)--axis, the line \(x=\lfloor\frac{\beta}{2}\rfloor\) and the diagonal \(\alpha\beta=x\alpha+y\beta\) and  \(\mathcal{T}_{u}\) the set of points of \(\mathcal{L}\) inside (and not in the border of) the triangle delimited by the \(y\)--axis, the line \(y=\lfloor\frac{\alpha}{2}\rfloor\) and the diagonal \(\alpha\beta=x\alpha+y\beta\). The set of \emph{supersymmetric gaps} is defined to be
	\[\mathsf{SG}:=\Big\{\begin{array}{cc}
	\mathcal{T}_{u}&\text{if}\;\;\;|\mathcal{T}_{u}|<|\mathcal{T}_{r}|\\
		\mathcal{T}_{r}&\text{if}\;\;\;|\mathcal{T}_{r}|<|\mathcal{T}_{u}|.
	\end{array}\]
	We also define the set of \emph{self-symmetric gaps}
	\[\mathsf{SSG}:=\{g\in \mathbb{N}\setminus\Gamma : \;W(g)=0\}.\]
\end{defin}

\begin{ex}

For the semigroup $\Gamma=\langle 7,8 \rangle$ we have that the set \(\mathcal{T}_{r}\) consists of the gaps $5,13,6$, and the set \(\mathcal{T}_{u}\) is made up with the gaps $1,9,17,2,10,3$. Hence $\mathsf{SG}=\mathcal{T}_{r}$, this is represented in the shaded part of Figure \ref{fig:xz}. It is easily checked that $\mathsf{SSG}=\{4,12,20\}$. This is represented in the striped part of Figure \ref{fig:xz}.

		\begin{figure}[H]
			\begin{center}
			
			\begin{tikzpicture}[line cap=round,line join=round,>=triangle 45,x=0.6cm,y=0.6cm]
			\clip(-0.5,-0.5) rectangle (8.5,7.5);
			
			\fill[line width=2.pt,color=zzttqq,fill=zzttqq,pattern=north east lines,pattern color=zzttqq] (3.,3.) -- (4.,3.) -- (4.,0.) -- (3.,0.) -- cycle;
			\fill[line width=2.pt,color=zzttqq,fill=zzttqq,fill opacity=0.20] (4.,2.) -- (5.,2.) -- (5.,1.) -- (6.,1.) -- (6.,0.) -- (4.,0.) -- cycle;
			\draw [line width=1.pt,dash pattern=on 3pt off 3pt] (0.,7.)-- (0.,0.);
			\draw [line width=1.pt,dash pattern=on 3pt off 3pt]  (0.,0.)-- (8.,0.);
			\draw [line width=1.pt,dash pattern=on 3pt off 3pt]  (8.,0.)-- (8.,7.);
			\draw[line width=1.pt,dash pattern=on 3pt off 3pt]  (8.,7.)-- (0.,7.);
			\draw [line width=1.pt,dash pattern=on 3pt off 3pt]  (1.,7.)-- (1.,0.);
			\draw [line width=1.pt,dash pattern=on 3pt off 3pt]  (2.,0.)-- (2.,7.);
			\draw [line width=1.pt,dash pattern=on 3pt off 3pt]  (3.,0.)-- (3.,7.);
			\draw[line width=1.pt,dash pattern=on 3pt off 3pt]  (4.,7.)-- (4.,0.);
			\draw[line width=1.pt,dash pattern=on 3pt off 3pt]  (5.,0.)-- (5.,7.);
			\draw [line width=1.pt,dash pattern=on 3pt off 3pt]  (6.,7.)-- (6.,0.);
			\draw [line width=1.pt,dash pattern=on 3pt off 3pt]  (7.,0.)-- (7.,7.);
			\draw[line width=1.pt,dash pattern=on 3pt off 3pt]  (0.,1.)-- (8.,1.);
			\draw [line width=1.pt,dash pattern=on 3pt off 3pt]  (8.,2.)-- (0.,2.);
			\draw[line width=1.pt,dash pattern=on 3pt off 3pt]  (0.,3.)-- (8.,3.);
			\draw [line width=1.pt,dash pattern=on 3pt off 3pt] (8.,4.)-- (0.,4.);
			\draw[line width=1.pt,dash pattern=on 3pt off 3pt]  (0.,5.)-- (8.,5.);
			\draw [line width=1.pt,dash pattern=on 3pt off 3pt]  (8.,6.)-- (0.,6.);
			\draw[line width=1.5pt]  (0.,7.)-- (8.,0.);
			\draw [line width=1.2pt,dash pattern=on 3pt off 3pt] (0.,3.)-- (3.,3.);
			\draw [line width=1.2pt,dash pattern=on 3pt off 3pt] (3.,3.)-- (3.,2.);
			\draw [line width=1.2pt,dash pattern=on 3pt off 3pt] (3.,2.)-- (2.,2.);
			\draw [line width=1.2pt, dash pattern=on 3pt off 3pt] (2.,2.)-- (2.,1.);
			\draw [line width=1.2pt, dash pattern=on 3pt off 3pt] (2.,1.)-- (1.,1.);
			\draw [line width=1.2pt, dash pattern=on 3pt off 3pt] (1.,1.)-- (1.,0.);
			\draw [line width=1.2pt, dash pattern=on 3pt off 3pt] (1.,0.)-- (0.,0.);
			\draw [line width=1.2pt, dash pattern=on 3pt off 3pt] (0.,0.)-- (0.,3.);
			\draw [line width=1.2pt, dash pattern=on 3pt off 3pt] (0.,3.)-- (3.,3.);
			\draw [line width=1.2pt, dash pattern=on 3pt off 3pt] (3.,3.)-- (3.,4.);
			\draw [line width=1.2pt, dash pattern=on 3pt off 3pt] (3.,4.)-- (2.,4.);
			\draw [line width=1.2pt, dash pattern=on 3pt off 3pt] (2.,4.)-- (2.,5.);
			\draw [line width=1.2pt, dash pattern=on 3pt off 3pt] (2.,5.)-- (1.,5.);
			\draw [line width=1.2pt, dash pattern=on 3pt off 3pt] (1.,5.)-- (1.,6.);
			\draw [line width=1.2pt, dash pattern=on 3pt off 3pt] (1.,6.)-- (0.,6.);
			\draw [line width=1.2pt, dash pattern=on 3pt off 3pt] (0.,6.)-- (0.,3.);
			
			\draw (0,0.9) node[anchor=north west] {\small 41};
			\draw (0,1.9) node[anchor=north west] {\small 33};
			\draw (0,2.9) node[anchor=north west] {\small 25};
			\draw (0,3.9) node[anchor=north west] {\small 17};
			\draw (0,4.9) node[anchor=north west] {\small 9};
			\draw (0,5.9) node[anchor=north west] {\small 1};
			
			\draw (1,1.9) node[anchor=north west] {\small 26};
			\draw (1,2.9) node[anchor=north west] {\small 18};
			\draw (1,3.9) node[anchor=north west] {\small 10};
			\draw (1,4.9) node[anchor=north west] {\small 2};
			
			\draw (2,2.9) node[anchor=north west] {\small 11};
			\draw (2,3.9) node[anchor=north west] {\small 3};
			
			\draw (4,0.9) node[anchor=north west] {\small 13};
			\draw (5,0.9) node[anchor=north west] {\small 6};
			\draw (4,1.9) node[anchor=north west] {\small 5};
			
			\draw (1,0.9) node[anchor=north west] {\small 34};
			\draw (2,0.9) node[anchor=north west] {\small 27};
			\draw (3,0.9) node[anchor=north west] {\small 20};
			
			\draw (2,1.9) node[anchor=north west] {\small 19};
			\draw (3,1.9) node[anchor=north west] {\small 12};
			
			\draw (3,2.95) node[anchor=north west] {\small 4};
			\end{tikzpicture}
			\caption{Lattice representation of the gap set \(\mathbb{N}\setminus\Gamma\). The shaded set is \(\mathsf{SG}\) and the striped one is \(\mathsf{SSG}\). }
			\label{fig:xz}
			
		\end{center}
		\end{figure}
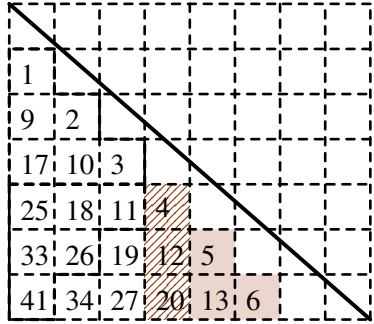
\end{ex}

At this point, we are able to prove the main result of the paper.
\begin{theorem}\label{thm:partition}
	Let \(\Gamma=\langle\alpha,\beta\rangle\) be a numerical semigroup. Then the set \(\mathsf{SG}\cup \mathsf{SSG}\) of supersymmetric and self-symmetric gaps completely determines the set of gaps of \(\Gamma\). In particular, it determines \(\Gamma\) itself.
\end{theorem}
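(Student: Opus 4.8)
The plan is to show that the finite configuration $\mathsf{SG}\cup\mathsf{SSG}$ recovers the pair $(\alpha,\beta)$; since $\alpha,\beta$ determine every gap via $g=\alpha\beta-a\alpha-b\beta$ and the semigroup itself, this yields the theorem. I would work throughout in the lattice $\mathcal{L}$, where $LG$ is the set of integral points $(a,b)$ with $a,b\ge 1$ lying strictly below the diagonal $a\alpha+b\beta=\alpha\beta$. First I would record the partition of $LG$ cut out by the two medians $a=\lfloor\beta/2\rfloor$ and $b=\lfloor\alpha/2\rfloor$: the corner $a>\lfloor\beta/2\rfloor,\ b>\lfloor\alpha/2\rfloor$ contains no gap (any such point satisfies $a\alpha+b\beta>\alpha\beta$), so $LG$ splits into the upper wing $\mathcal{T}_{u}$ (with $b>\lfloor\alpha/2\rfloor$, forcing $a\le\lfloor\beta/2\rfloor$), the right wing $\mathcal{T}_{r}$ (with $a>\lfloor\beta/2\rfloor$, forcing $b\le\lfloor\alpha/2\rfloor$), the self-symmetric points $\mathsf{SSG}$ on the medians, and a core $\mathcal{T}_{0}$ with $a\le\lfloor\beta/2\rfloor,\ b\le\lfloor\alpha/2\rfloor$.

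Second, I would invoke the two Wilf-number reflections. By Lemmas \ref{lem:sym1} and \ref{lem:sym2}, the maps $\sigma_{u}\colon(a,b)\mapsto(a,\alpha-b)$ and $\sigma_{r}\colon(a,b)\mapsto(\beta-a,b)$ send a gap on the relevant wing to a gap and carry $-W$ to $W$. The key combinatorial claim is that they fill the core exactly, $\mathcal{T}_{0}=\sigma_{u}(\mathcal{T}_{u})\sqcup\sigma_{r}(\mathcal{T}_{r})$, whence
\[
LG=\mathcal{T}_{u}\ \sqcup\ \sigma_{u}(\mathcal{T}_{u})\ \sqcup\ \mathcal{T}_{r}\ \sqcup\ \sigma_{r}(\mathcal{T}_{r})\ \sqcup\ \mathsf{SSG}.
\]
This is the substance of the \emph{polyomino game}: each wing, reflected across its median, tiles the complementary half of the core with no overlap. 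Establishing this identity---disjointness, and that no core point is missed---is one of the two main technical points; it amounts to checking column by column (resp.\ row by row) that the staircase of a wing is the exact mirror of the staircase of the core, using $\gcd(\alpha,\beta)=1$ to control the boundary.

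Third, I would recover $(\alpha,\beta)$ from the data. Since $\gcd(\alpha,\beta)=1$, at most one generator is even. If one is, say $\beta$, then by Theorem \ref{thm:equifix} the set $\mathsf{SSG}$ consists precisely of the gaps on the median $a=\beta/2$; as integers these are $(\beta/2)(\alpha-2b)$ for $b\ge 1$, an arithmetic progression with common difference $\beta$ and largest term $(\beta/2)(\alpha-2)$. Thus $\beta$ is read off as the common difference and $\alpha$ from the largest term. The remaining case is $\mathsf{SSG}=\varnothing$ (both generators odd), where only the smaller wing $\mathsf{SG}$ is available; here I would use that two elements of $\mathsf{SG}$ in the same column (resp.\ row) differ by $\beta$ (resp.\ $\alpha$), so that $\{\alpha,\beta\}$ occurs among the pairwise differences of $\mathsf{SG}$, and the correct pair is singled out by coprimality together with consistency of both wings with the resulting diagonal. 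Once $(\alpha,\beta)$ is recovered, the entire diagonal---and hence every gap---is determined, reconstructing $LG$ and $\Gamma$; the reflections of the second step then simply display this reconstruction as the polyomino game.

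The hardest part is the uniqueness in the third step, above all the all-odd case: one must show that no second coprime pair $(\alpha',\beta')$ yields the same finite set $\mathsf{SG}\cup\mathsf{SSG}$ working only from the smaller wing, and one must dispose of the genuinely degenerate configurations---the ties $|\mathcal{T}_{u}|=|\mathcal{T}_{r}|$, where $\mathsf{SG}$ is a priori ambiguous, and the short cases $\alpha=3$ or $\Gamma=\langle 2,3\rangle$, where $|\mathsf{SSG}|=1$ and the progression carries too little information. I expect these to need a direct small-parameter analysis, whereas the core-filling identity of the second step is proved once and for all by the mirror-staircase computation.
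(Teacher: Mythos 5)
Your steps (1)--(2) --- the five-piece decomposition $\mathbb{N}\setminus\Gamma=\mathcal{T}_u\sqcup s_{\alpha}(\mathcal{T}_u)\sqcup\mathsf{SSG}\sqcup\mathcal{T}_r\sqcup s_{\beta}(\mathcal{T}_r)$ obtained by reflecting each wing across its median --- are exactly the partition the paper establishes (there via the disjointness computation $s_{\beta}^{-1}(s_{\alpha}(\mathcal{T}_u))=\emptyset$ and the matching of right-hand borders through the translation $\tau$); your ``mirror-staircase'' check is the same content. The divergence, and the gap, is in your step (3). The paper does \emph{not} attempt to recover the pair $(\alpha,\beta)$ from $\mathsf{SG}\cup\mathsf{SSG}$ viewed as a set of integers: it works entirely inside the lattice, where $\alpha$ and $\beta$ (hence the maps $s_{\alpha},s_{\beta}$ and the rectangle with vertices $(0,0)$, $(\lfloor\beta/2\rfloor,0)$, $(\lfloor\beta/2\rfloor,\lfloor\alpha/2\rfloor)$, $(0,\lfloor\alpha/2\rfloor)$) are available, and it recovers $s_{\beta}(\mathcal{T}_r)$ as the complement of $s_{\alpha}(\mathsf{SG})\cup\mathsf{SSG}$ in that rectangle; the partition then yields all of $LG$. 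Your plan is strictly more ambitious, and its crux --- uniqueness of $(\alpha,\beta)$ given only the smaller wing in the all-odd case, plus the tie and short cases --- is precisely the part you leave as ``I expect these to need a direct small-parameter analysis.'' That is not a proof of the statement; it is a proof modulo its hardest step.

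Moreover, that step is genuinely delicate rather than routine. For $\Gamma=\langle 3,5\rangle$ one has $|\mathcal{T}_u|=|\mathcal{T}_r|=1$ (a tie, which Definition \ref{def:supersym} does not even resolve), and with the resolution $\mathsf{SG}=\mathcal{T}_r$ the integer set $\mathsf{SG}\cup\mathsf{SSG}$ is $\{1\}$ --- which coincides with $\mathsf{SG}\cup\mathsf{SSG}$ for $\Gamma=\langle 2,3\rangle$. So the bare set of integers cannot always single out $(\alpha,\beta)$, and your reconstruction cannot go through in full generality without either extra conventions or retreating to the paper's reading, in which the data is the pair of polyominoes in the lattice (equivalently, $\mathsf{SG}\cup\mathsf{SSG}$ together with the ambient parameters $\lfloor\alpha/2\rfloor,\lfloor\beta/2\rfloor$). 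Under that reading your step (3) should simply be replaced by the complement-in-the-rectangle argument; under your stronger reading the uniqueness claim is missing and, in the degenerate cases above, false as stated. Either way, the argument as written does not close.
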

\begin{proof}
	With the notation of Definition \ref{def:supersym}, consider the symmetry \(s_{\alpha}:\mathcal{T}_u\rightarrow LG\) along the line \(y=\lfloor\frac{\alpha}{2}\rfloor\) defined by \((a,b)\mapsto(a,\alpha-b)\), as well as the symmetry \(s_{\beta}:\mathcal{T}_r\rightarrow LG\) along the line \(x=\lfloor\frac{\beta}{2}\rfloor\) defined by \((a,b)\mapsto(\beta-a,b)\).
	First we are going to show that \(s_{\alpha}(\mathcal{T}_u)\cap s_{\beta}(\mathcal{T}_r)=\emptyset.\) Consider \((a,b)\in\mathcal{T}_u\) then
	
	\[
	(a,b)\mapsto(a,\alpha-b)\mapsto(\beta-a,\alpha-b),
	\]
	where \(\alpha\beta-(\beta-a)\alpha-(\alpha-b)\beta=a\alpha+b\beta-\alpha\beta<0\), since \(\alpha\beta-a\alpha-b\beta\) is the representation of a gap. Therefore, \(s^{-1}_{\beta}(s_{\alpha}(\mathcal{T}_u))=\emptyset\). Analogously, it can be shown that \(s^{-1}_{\alpha}(s_{\beta}(\mathcal{T}_u))=\emptyset\). 
	
	\medskip
	Now, let $B(\mathcal{T}_u)$ resp.~$B(\mathcal{T}_r)$ be the border points of sets $\mathcal{T}_u$~resp.~$\mathcal{T}_r$, i.e. those points such that \((a,b)\in \mathcal{T}_u\)~resp. \(\mathcal{T}_r\)~and \((a+1,b)\notin LG\) or \((a,b+1)\notin LG\). Moreover, let $RB(\mathcal{T}_u)$ resp.~$RB(\mathcal{T}_r)$ denote the set of border points of the type \((a+1,b)\notin LG\). Observe that those points determine ES--turns, hence the borders $B(\mathcal{T}_u)$ and $B(\mathcal{T}_r)$ are determined by $RB(\mathcal{T}_u)$ and $RB(\mathcal{T}_r)$.
	
	\medskip
	Let us denote by \(\tau:\mathcal{L}\rightarrow\mathcal{L}\) the translation defined by \((a,b)\mapsto(a+1,b)\). We claim that
	
	\[
	s^{-1}_{\beta}(\tau(s_{\alpha}(RB(\mathcal{T}_u))))=RB(\mathcal{T}_r).
	\]
	Indeed, consider the point \((a,b)\in\mathcal{T}_u\), then \(s^{-1}_{\beta}(\tau(s_{\alpha}((a,b))))=(\beta-a-1,\alpha-a)\in RB(\mathcal{T}_r)\) due to the fact that $\alpha\beta-(\beta-a-1)\alpha-b\beta>0$ and $\alpha\beta-(\beta-a)\alpha-b\beta<0$.  A similar reasoning allows us to prove the equality
	
	\[s^{-1}_{\alpha}(\tau^{-1}(s_{\beta}(RB(\mathcal{T}_r))))=RB(\mathcal{T}_u).\]
	The proof will finish by distinguishing three cases concerning the parity of $\alpha$ and $\beta$. Let us start with easiest one and assume that \(\alpha,\beta\) are both odd. By Theorem \ref{thm:equifix} there are no gaps with \(W(g)=0\). Thus, we have a configuration as in Figure \ref{fig:proofwithoutgaps}.
	\begin{figure}[H]
		\begin{center}
		\begin{tikzpicture}[line cap=round,line join=round,>=triangle 45,x=0.7cm,y=0.7cm]
		\clip(-0.5,-0.5) rectangle (7.5,5.5);
		
		\fill[line width=2.pt,fill=black, pattern=fivepointed stars,pattern color=black] (0.,4.) -- (1.,4.) -- (1.,3.) -- (2.,3.) -- (2.,1.) -- (1.,1.) -- (1.,0.) -- (0.,0.) -- cycle;
		\fill[line width=2.pt,fill=black,fill opacity=0.20000000298023224] (1.,0.) -- (1.,1.) -- (2.,1.) -- (2.,2.) -- (4.,2.) -- (4.,1.) -- (5.,1.) -- (5.,0.) -- cycle;
		\draw [line width=2.pt,color=zzttqq] (0.,0.)-- (0.,5.);
		\draw [line width=2.pt,color=zzttqq] (0.,5.)-- (7.,5.);
		\draw [line width=2.pt,color=zzttqq] (7.,5.)-- (7.,0.);
		\draw [line width=2.pt,color=zzttqq] (7.,0.)-- (0.,0.);
		\draw [line width=1.pt,dash pattern=on 1.5pt off 1.5pt] (1.,0.)-- (1.,5.);
		\draw [line width=1.pt,dash pattern=on 1.5pt off 1.5pt](2.,5.)-- (2.,0.);
		\draw [line width=1.pt,dash pattern=on 1.5pt off 1.5pt](3.,0.)-- (3.,5.);
		\draw [line width=1.pt,dash pattern=on 1.5pt off 1.5pt](4.,5.)-- (4.,0.);
		\draw [line width=1.pt,dash pattern=on 1.5pt off 1.5pt](5.,0.)-- (5.,5.);
		\draw [line width=1.pt,dash pattern=on 1.5pt off 1.5pt](6.,5.)-- (6.,0.);
		\draw[line width=1.pt,dash pattern=on 1.5pt off 1.5pt](0.,1.)-- (7.,1.);
		\draw [line width=1.pt,dash pattern=on 1.5pt off 1.5pt] (7.,2.)-- (0.,2.);
		\draw [line width=1.pt,dash pattern=on 1.5pt off 1.5pt](0.,3.)-- (7.,3.);
		\draw[line width=1.pt,dash pattern=on 1.5pt off 1.5pt] (7.,4.)-- (0.,4.);
		
		\draw [line width=1.pt] (0.,5.)-- (7.,0.);
		\draw [line width=2.pt] (0.,4.)-- (1.,4.);
		\draw [line width=2.pt] (1.,4.)-- (1.,3.);
		\draw [line width=2.pt] (1.,3.)-- (2.,3.);
		\draw [line width=2.pt] (2.,3.)-- (2.,1.);
		\draw [line width=2.pt] (2.,1.)-- (1.,1.);
		\draw [line width=2.pt] (1.,1.)-- (1.,0.);
		\draw [line width=2.pt] (1.,0.)-- (0.,0.);
		\draw [line width=2.pt] (0.,0.)-- (0.,4.);
		\draw [line width=2.pt] (1.,0.)-- (1.,1.);
		\draw [line width=2.pt] (1.,1.)-- (2.,1.);
		\draw [line width=2.pt] (2.,1.)-- (2.,2.);
		\draw [line width=2.pt] (2.,2.)-- (4.,2.);
		\draw [line width=2.pt] (4.,2.)-- (4.,1.);
		\draw [line width=2.pt] (4.,1.)-- (5.,1.);
		\draw [line width=2.pt] (5.,1.)-- (5.,0.);
		\draw [line width=2.pt] (5.,0.)-- (1.,0.);
		\end{tikzpicture}
		\caption{The sets \(\mathcal{T}_u\cup s_{\alpha}(\mathcal{T}_u)\) (starred) and \(\mathcal{T}_r\cup s_{\beta}(\mathcal{T}_r)\) (shaded).}
		\label{fig:proofwithoutgaps}
	\end{center}
	\end{figure}
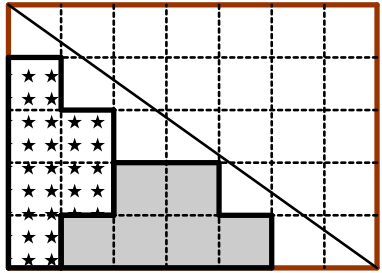

	In fact, it is easily checked that 
	\[
	B(\mathcal{T}_u)\cup B(\mathcal{T}_r)\cup s_{\beta}(B(\mathcal{T}_r))\supseteq B(LG),
	\]
	and the sets fit as shown in Figure \ref{fig:proofwithoutgaps}.
	\medskip
	
	Next we assume that \(\alpha\) is even (the case \(\beta\) even follows analogously). By Theorem \ref{thm:equifix} the set \(\mathsf{SSG}\) consists of exactly those gaps given by the lattice points \((a,\alpha/2)\) with \(1\leq a\leq \lfloor\beta/2\rfloor\). Let \(B(\mathsf{SSG})\) denote the set of border points in \(\mathsf{SSG}\), then we have a configuration as in Figure \ref{fig:42}.

	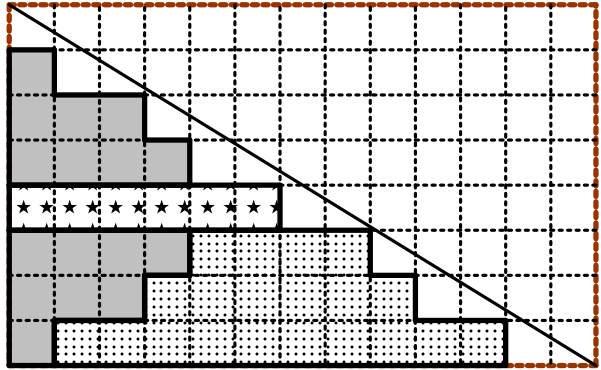
\begin{figure}[H]
		\begin{center}
		\begin{tikzpicture}[line cap=round,line join=round,>=triangle 45,x=0.6cm,y=0.6cm]
		\clip(-0.5,-0.5) rectangle (13.5,8.5);
		\fill[line width=2.8pt,fill=black,pattern=dots,pattern color=black] (1.,0.) -- (1.,1.) -- (3.,1.) -- (3.,2.) -- (4.,2.) -- (4.,3.) -- (8.,3.) -- (8.,2.) -- (9.,2.) -- (9.,1.) -- (11.,1.) -- (11.,0.) -- cycle;
		\fill[line width=3.6pt,fill=black,pattern=fivepointed stars,pattern color=black] (0.,4.) -- (0.,3.) -- (6.,3.) -- (6.,4.) -- cycle;
		\fill[line width=2.pt,fill=black,fill opacity=0.25] (0.,7.) -- (1.,7.) -- (1.,6.) -- (3.,6.) -- (3.,5.) -- (4.,5.) -- (4.,4.) -- (0.,4.) -- cycle;
		\fill[line width=2.pt,fill=black,fill opacity=0.25] (0.,3.) -- (0.,0.) -- (1.,0.) -- (1.,1.) -- (3.,1.) -- (3.,2.) -- (4.,2.) -- (4.,3.) -- cycle;
		\draw [line width=2.pt,dash pattern=on 2pt off 2pt,color=zzttqq] (0.,8.)-- (0.,0.);
		\draw [line width=2.pt,dash pattern=on 2pt off 2pt,color=zzttqq] (0.,0.)-- (13.,0.);
		\draw [line width=2.pt,dash pattern=on 2pt off 2pt,color=zzttqq] (13.,0.)-- (13.,8.);
		\draw [line width=2.pt,dash pattern=on 2pt off 2pt,color=zzttqq] (13.,8.)-- (0.,8.);
		\draw [line width=1.2pt] (0.,8.)-- (13.,0.);
		\draw [line width=1.2pt,dotted] (1.,8.)-- (1.,0.);
		\draw [line width=1.2pt,dotted] (2.,0.)-- (2.,8.);
		\draw [line width=1.2pt,dotted] (3.,0.)-- (3.,8.);
		\draw [line width=1.2pt,dotted] (4.,0.)-- (4.,8.);
		\draw [line width=1.2pt,dotted] (5.,8.)-- (5.,0.);
		\draw [line width=1.2pt,dotted] (6.,0.)-- (6.,8.);
		\draw [line width=1.2pt,dotted] (7.,8.)-- (7.,0.);
		\draw [line width=1.2pt,dotted] (8.,0.)-- (8.,8.);
		\draw [line width=1.2pt,dotted] (9.,0.)-- (9.,8.);
		\draw [line width=1.2pt,dotted] (10.,8.)-- (10.,0.);
		\draw [line width=1.2pt,dotted] (11.,0.)-- (11.,8.);
		\draw [line width=1.2pt,dotted] (12.,8.)-- (12.,0.);
		\draw [line width=1.2pt,dotted] (0.,1.)-- (13.,1.);
		\draw [line width=1.2pt,dotted] (13.,2.)-- (0.,2.);
		\draw [line width=1.2pt,dotted] (0.,3.)-- (13.,3.);
		\draw [line width=1.2pt,dotted] (13.,4.)-- (0.,4.);
		\draw [line width=1.2pt,dotted] (0.,5.)-- (13.,5.);
		\draw [line width=1.2pt,dotted] (13.,6.)-- (0.,6.);
		\draw [line width=1.2pt,dotted] (0.,7.)-- (13.,7.);
		\draw [line width=2.pt] (1.,0.)-- (1.,1.);
		\draw [line width=2.pt] (1.,1.)-- (3.,1.);
		\draw [line width=2.pt] (3.,1.)-- (3.,2.);
		\draw [line width=2.pt] (3.,2.)-- (4.,2.);
		\draw [line width=2.pt] (4.,2.)-- (4.,3.);
		\draw [line width=2.pt] (4.,3.)-- (8.,3.);
		\draw [line width=2.pt] (8.,3.)-- (8.,2.);
		\draw [line width=2.pt] (8.,2.)-- (9.,2.);
		\draw [line width=2.pt] (9.,2.)-- (9.,1.);
		\draw [line width=2.pt] (9.,1.)-- (11.,1.);
		\draw [line width=2.pt] (11.,1.)-- (11.,0.);
		\draw [line width=2.pt] (11.,0.)-- (1.,0.);
		\draw [line width=2.pt] (0.,4.)-- (0.,3.);
		\draw [line width=2.pt] (0.,3.)-- (6.,3.);
		\draw [line width=2.pt] (6.,3.)-- (6.,4.);
		\draw [line width=2.pt] (6.,4.)-- (0.,4.);
		\draw [line width=2.pt] (0.,7.)-- (1.,7.);
		\draw [line width=2.pt] (1.,7.)-- (1.,6.);
		\draw [line width=2.pt] (1.,6.)-- (3.,6.);
		\draw [line width=2.pt] (3.,6.)-- (3.,5.);
		\draw [line width=2.pt] (3.,5.)-- (4.,5.);
		\draw [line width=2.pt] (4.,5.)-- (4.,4.);
		\draw [line width=2.pt] (4.,4.)-- (0.,4.);
		\draw [line width=2.pt] (0.,4.)-- (0.,7.);
		\draw [line width=2.pt] (0.,3.)-- (0.,0.);
		\draw [line width=2.pt] (0.,0.)-- (1.,0.);
		\draw [line width=2.pt] (1.,0.)-- (1.,1.);
		\draw [line width=2.pt] (1.,1.)-- (3.,1.);
		\draw [line width=2.pt] (3.,1.)-- (3.,2.);
		\draw [line width=2.pt] (3.,2.)-- (4.,2.);
		\draw [line width=2.pt] (4.,2.)-- (4.,3.);
		\draw [line width=2.pt] (4.,3.)-- (0.,3.);
		\end{tikzpicture}
		\caption{Sets \(\mathcal{T}_u\cup s_{\alpha}(\mathcal{T}_u)\) (shaded), \(\mathsf{SSG}\) (starred), and \(\mathcal{T}_r\cup s_{\beta}(\mathcal{T}_r)\) (dotted).}
		\label{fig:42}
			\end{center}
	\end{figure}

	So it is easily seen that 
	
	\[
	B(\mathcal{T}_u)\cup B(\mathsf{SSG})\cup B(\mathcal{T}_r)\cup s_{\beta}(B(\mathcal{T}_r))\supseteq B(LG).
	\]
	
	All this together shows that the union of the triangles \(\mathcal{T}_u,\mathcal{T}_r\), their images and the set of self-symmetric gaps build a partition of the set of gaps into disjoint sets
	
	\[\mathbb{N}\setminus\Gamma=\mathcal{T}_u\bigsqcup s_{\alpha}(\mathcal{T}_u)\bigsqcup \mathsf{SSG}\bigsqcup\mathcal{T}_r\bigsqcup s_{\beta}(\mathcal{T}_r).\]
	
	We are finished as soon as the procedure to recover \(\mathbb{N}\setminus\Gamma\) ---hence \(\Gamma\)--- from the set \(\mathsf{SG}\cup \mathsf{SSG}\) will be given.
	
	\medskip
	Let us assume that \(\mathsf{SG}=\mathcal{T}_u\) (similarly for \(\mathcal{T}_r\)). Thus, we have 
	
	\[
	s_{\beta}(B(\mathcal{T}_r))=\tau(s_{\alpha}(B(\mathsf{SG}))).
	\]
	
	We distinguish two cases:
	\begin{enumerate}
		\item If \(\alpha,\beta\) are both odd, then we can recover \(s_{\beta}(\mathcal{T}_r)\) as the polyomino corresponding to the complement of \(s_\alpha(\mathsf{SG})\) in the lattice square with vertices \((0,0),(\lfloor\frac{\beta}{2}\rfloor,0),(\lfloor\frac{\beta}{2}\rfloor,\lfloor\frac{\alpha}{2}\rfloor)\), and \((0,\lfloor\frac{\alpha}{2}\rfloor)\).
		\medskip
		
		\item If \(\alpha\) or \(\beta\) is even, then consider the polyomino \(\mathsf{SSG}\cup s_\alpha(\mathsf{SG})\). Thus, we can recover  \(s_{\beta}(\mathcal{T}_r)\) as the polyomino corresponding to the complement of \(s_\alpha(\mathsf{SG})\cup\mathsf{SSG}\) in the lattice square with vertices \((0,0),(\lfloor\frac{\beta}{2}\rfloor,0),(\lfloor\frac{\beta}{2}\rfloor,\lfloor\frac{\alpha}{2}\rfloor)\), and \((0,\lfloor\frac{\alpha}{2}\rfloor)\).
	\end{enumerate}
	
	Observe that, if \(\mathsf{SG}=\mathcal{T}_r\), then the roles of \(\alpha,\beta\) in (2) and (3) need to be exchange. 
	\medskip
	
	In short, we have checked that in all cases we can obtain \(s_{\beta}(\mathcal{T}_r)\)~resp.~\(s_{\alpha}(\mathcal{T}_u)\), hence  \(\mathcal{T}_r\)~resp.~\(\mathcal{T}_u\) from certain linear transformations of the set \(\mathsf{SG}\cup\mathsf{SSG}\) in the lattice. Therefore, by the previous partition of the set of gaps we can reconstruct completely the set of gaps from the set \(\mathsf{SG}\cup\mathsf{SSG}\).
\end{proof}

The proof of Theorem \ref{thm:partition} shows in particular that \(\mathsf{SG}\) and \(\mathsf{SSG}\) are polyominoes, and that we can obtain the whole set \(LG\) making operations with them. 
These necessary operations which allow us to obtain the set $LG$ from \textsf{SG} and \textsf{SSG} will be called \emph{polyomino game}. 
We illustrate both the polyomino game and the proof of Theorem \ref{thm:partition} with an example.

\begin{ex}
	Let \(\Gamma=\langle 7,8 \rangle\). We start with \(\mathsf{SG}\) which in this case is \(\mathcal{T}_r\). Then the set of gaps represented in \(\mathcal{T}_r\) is \(\{5,6,13\}\) (see Figure \ref{fig:43}). Now, we consider the polyomino \(s_{\beta}(\mathsf{SG})\cup\mathsf{SSG}\) which represents \(\{4,12,19,20,27,34\}\) inside the rectangle of vertices \((0,0),(4,0),(4,3),(0,3)\) (see Figure \ref{fig:44}).
	\medskip
	
	 After that, we consider the complement in the rectangle of vertices \((0,0),(4,0),(4,3),(0,3)\) of the polyomino \(s_{\beta}(\mathsf{SG})\cup\mathsf{SSG}\) which represents the set of gaps \(\{41,33,26,25,18,11\}\) and we apply the map \(s_\alpha\) as we can see in Figure \ref{fig:45}. Finally, we put all polyominoes together to give rise to \(\mathbb{N}\setminus\Gamma\) as shown in Figure \ref{fig:46}.
\vspace{10cm}

	\begin{multicols}{2}
		\begin{figure}[H]
			\begin{center}
			\begin{tikzpicture}[line cap=round,line join=round,>=triangle 45,x=0.6cm,y=0.6cm]
			\clip(-0.5,-0.5) rectangle (8.5,7.5);
			
			\fill[line width=2.pt,color=zzttqq,fill=zzttqq,fill opacity=0.20] (4.,2.) -- (5.,2.) -- (5.,1.) -- (6.,1.) -- (6.,0.) -- (4.,0.) -- cycle;
			\draw [line width=1.pt,dash pattern=on 3pt off 3pt] (0.,7.)-- (0.,0.);
			\draw [line width=1.pt,dash pattern=on 3pt off 3pt]  (0.,0.)-- (8.,0.);
			\draw [line width=1.pt,dash pattern=on 3pt off 3pt]  (8.,0.)-- (8.,7.);
			\draw[line width=1.pt,dash pattern=on 3pt off 3pt]  (8.,7.)-- (0.,7.);
			\draw [line width=1.pt,dash pattern=on 3pt off 3pt]  (1.,7.)-- (1.,0.);
			\draw [line width=1.pt,dash pattern=on 3pt off 3pt]  (2.,0.)-- (2.,7.);
			\draw [line width=1.pt,dash pattern=on 3pt off 3pt]  (3.,0.)-- (3.,7.);
			\draw[line width=1.pt,dash pattern=on 3pt off 3pt]  (4.,7.)-- (4.,0.);
			\draw[line width=1.pt,dash pattern=on 3pt off 3pt]  (5.,0.)-- (5.,7.);
			\draw [line width=1.pt,dash pattern=on 3pt off 3pt]  (6.,7.)-- (6.,0.);
			\draw [line width=1.pt,dash pattern=on 3pt off 3pt]  (7.,0.)-- (7.,7.);
			\draw[line width=1.pt,dash pattern=on 3pt off 3pt]  (0.,1.)-- (8.,1.);
			\draw [line width=1.pt,dash pattern=on 3pt off 3pt]  (8.,2.)-- (0.,2.);
			\draw[line width=1.pt,dash pattern=on 3pt off 3pt]  (0.,3.)-- (8.,3.);
			\draw [line width=1.pt,dash pattern=on 3pt off 3pt] (8.,4.)-- (0.,4.);
			\draw[line width=1.pt,dash pattern=on 3pt off 3pt]  (0.,5.)-- (8.,5.);
			\draw [line width=1.pt,dash pattern=on 3pt off 3pt]  (8.,6.)-- (0.,6.);
			\draw[line width=1.5pt]  (0.,7.)-- (8.,0.);
			\draw [line width=2.pt,color=zzttqq] (4.,2.)-- (5.,2.);
			\draw [line width=1.2pt,color=zzttqq] (5.,2.)-- (5.,1.);
			\draw [line width=1.2pt,color=zzttqq] (5.,1.)-- (6.,1.);
			\draw [line width=1.2pt,color=zzttqq] (6.,1.)-- (6.,0.);
			\draw [line width=1.2pt,color=zzttqq] (6.,0.)-- (4.,0.);
			\draw [line width=1.2pt,color=zzttqq] (4.,0.)-- (4.,2.);
			
			\draw (4,0.9) node[anchor=north west] {\small 13};
			\draw (5,0.9) node[anchor=north west] {\small 6};
			\draw (4,1.9) node[anchor=north west] {\small 5};
			
			\end{tikzpicture}
			\caption{The set \(\mathsf{SG}\) (shaded).}
			\label{fig:43}
		\end{center}
		\end{figure}
		\begin{figure}[H]
			\begin{center}
			\begin{tikzpicture}[line cap=round,line join=round,>=triangle 45,x=0.6cm,y=0.6cm]
			\clip(-0.5,-0.5) rectangle (8.5,7.5);
			
			\fill[line width=2.pt,color=zzttqq,fill=zzttqq,fill opacity=0.20] (3.,2.) -- (2.,2.) -- (2.,1.) -- (1.,1.) -- (1.,0.) -- (3.,0.) -- cycle;
			\fill[line width=2.pt,color=zzttqq,fill=zzttqq,pattern=north east lines,pattern color=zzttqq] (3.,3.) -- (4.,3.) -- (4.,0.) -- (3.,0.) -- cycle;
			\draw [line width=1.pt,dash pattern=on 3pt off 3pt] (0.,7.)-- (0.,0.);
			\draw[line width=1.pt,dash pattern=on 3pt off 3pt] (0.,0.)-- (8.,0.);
			\draw [line width=1.pt,dash pattern=on 3pt off 3pt]  (8.,0.)-- (8.,7.);
			\draw[line width=1.pt,dash pattern=on 3pt off 3pt]  (8.,7.)-- (0.,7.);
			\draw [line width=1.pt,dash pattern=on 3pt off 3pt]  (1.,7.)-- (1.,0.);
			\draw [line width=1.pt,dash pattern=on 3pt off 3pt]  (2.,0.)-- (2.,7.);
			\draw [line width=1.pt,dash pattern=on 3pt off 3pt]  (3.,0.)-- (3.,7.);
			\draw[line width=1.pt,dash pattern=on 3pt off 3pt]  (4.,7.)-- (4.,0.);
			\draw[line width=1.pt,dash pattern=on 3pt off 3pt]  (5.,0.)-- (5.,7.);
			\draw [line width=1.pt,dash pattern=on 3pt off 3pt]  (6.,7.)-- (6.,0.);
			\draw [line width=1.pt,dash pattern=on 3pt off 3pt]  (7.,0.)-- (7.,7.);
			\draw[line width=1.pt,dash pattern=on 3pt off 3pt]  (0.,1.)-- (8.,1.);
			\draw [line width=1.pt,dash pattern=on 3pt off 3pt]  (8.,2.)-- (0.,2.);
			\draw[line width=1.pt,dash pattern=on 3pt off 3pt]  (0.,3.)-- (8.,3.);
			\draw [line width=1.pt,dash pattern=on 3pt off 3pt] (8.,4.)-- (0.,4.);
			\draw[line width=1.pt,dash pattern=on 3pt off 3pt]  (0.,5.)-- (8.,5.);
			\draw [line width=1.pt,dash pattern=on 3pt off 3pt]  (8.,6.)-- (0.,6.);
			\draw[line width=1.5pt]  (0.,7.)-- (8.,0.);
			\draw [line width=1.2pt,color=zzttqq] (3.,2.)-- (2.,2.);
			\draw  [line width=1.2pt,color=zzttqq] (2.,2.)-- (2.,1.);
			\draw  [line width=1.2pt,color=zzttqq] (2.,1.)-- (1.,1.);
			\draw  [line width=1.2pt,color=zzttqq] (1.,1.)-- (1.,0.);
			\draw  [line width=1.2pt,color=zzttqq] (1.,0.)-- (3.,0.);
			\draw  [line width=1.2pt,color=zzttqq] (3.,0.)-- (3.,2.);
			\draw  [line width=1.2pt,color=zzttqq] (3.,3.)-- (4.,3.);
			\draw  [line width=1.2pt,color=zzttqq] (4.,3.)-- (4.,0.);
			\draw  [line width=1.2pt,color=zzttqq] (4.,0.)-- (3.,0.);
			\draw  [line width=1.2pt,color=zzttqq] (3.,0.)-- (3.,3.);
			
			\draw (1,0.9) node[anchor=north west] {\small 34};
			\draw (2,0.9) node[anchor=north west] {\small 27};
			\draw (3,0.9) node[anchor=north west] {\small 20};
			
			\draw (2,1.9) node[anchor=north west] {\small 19};
			\draw (3,1.9) node[anchor=north west] {\small 12};
			
			\draw (3,2.9) node[anchor=north west] {\small 4};
			\end{tikzpicture}
			
			\caption{The sets \(s_{\beta}(\mathsf{SG})\) (shaded) and \(\mathsf{SSG}\) (striped).}
			\label{fig:44}
		\end{center}
		\end{figure}
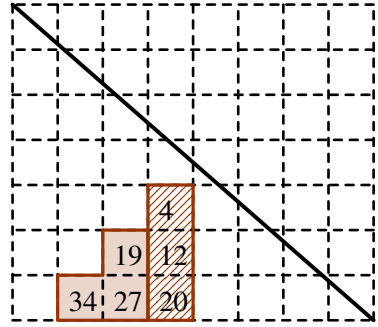

	\end{multicols}
	
	\begin{multicols}{2}
		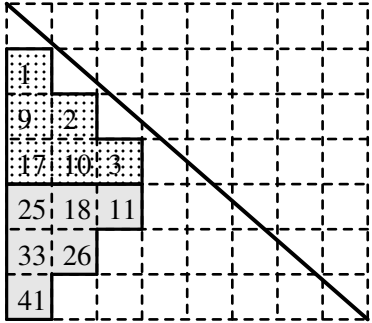
\begin{figure}[H]
			\begin{center}
				
			\begin{tikzpicture}[line cap=round,line join=round,>=triangle 45,x=0.6cm,y=0.6cm]
			\clip(-0.5,-0.5) rectangle (8.5,7.5);
			
			\fill[line width=2.pt,fill=black,fill opacity=0.10000000149011612] (0.,3.) -- (3.,3.) -- (3.,2.) -- (2.,2.) -- (2.,1.) -- (1.,1.) -- (1.,0.) -- (0.,0.) -- cycle;
			\fill[line width=2.pt,fill=black,pattern=dots,pattern color=black] (0.,3.) -- (3.,3.) -- (3.,4.) -- (2.,4.) -- (2.,5.) -- (1.,5.) -- (1.,6.) -- (0.,6.) -- cycle;
			\draw [line width=1.pt,dash pattern=on 3pt off 3pt] (0.,7.)-- (0.,0.);
			\draw [line width=1.pt,dash pattern=on 3pt off 3pt]  (0.,0.)-- (8.,0.);
			\draw [line width=1.pt,dash pattern=on 3pt off 3pt]  (8.,0.)-- (8.,7.);
			\draw[line width=1.pt,dash pattern=on 3pt off 3pt]  (8.,7.)-- (0.,7.);
			\draw [line width=1.pt,dash pattern=on 3pt off 3pt]  (1.,7.)-- (1.,0.);
			\draw [line width=1.pt,dash pattern=on 3pt off 3pt]  (2.,0.)-- (2.,7.);
			\draw [line width=1.pt,dash pattern=on 3pt off 3pt]  (3.,0.)-- (3.,7.);
			\draw[line width=1.pt,dash pattern=on 3pt off 3pt]  (4.,7.)-- (4.,0.);
			\draw[line width=1.pt,dash pattern=on 3pt off 3pt]  (5.,0.)-- (5.,7.);
			\draw [line width=1.pt,dash pattern=on 3pt off 3pt]  (6.,7.)-- (6.,0.);
			\draw [line width=1.pt,dash pattern=on 3pt off 3pt]  (7.,0.)-- (7.,7.);
			\draw[line width=1.pt,dash pattern=on 3pt off 3pt]  (0.,1.)-- (8.,1.);
			\draw [line width=1.pt,dash pattern=on 3pt off 3pt]  (8.,2.)-- (0.,2.);
			\draw[line width=1.pt,dash pattern=on 3pt off 3pt]  (0.,3.)-- (8.,3.);
			\draw [line width=1.pt,dash pattern=on 3pt off 3pt] (8.,4.)-- (0.,4.);
			\draw[line width=1.pt,dash pattern=on 3pt off 3pt]  (0.,5.)-- (8.,5.);
			\draw [line width=1.pt,dash pattern=on 3pt off 3pt]  (8.,6.)-- (0.,6.);
			\draw[line width=1.5pt]  (0.,7.)-- (8.,0.);
			\draw [line width=1.2pt] (0.,3.)-- (3.,3.);
			\draw [line width=1.2pt] (3.,3.)-- (3.,2.);
			\draw [line width=1.2pt] (3.,2.)-- (2.,2.);
			\draw [line width=1.2pt] (2.,2.)-- (2.,1.);
			\draw [line width=1.2pt] (2.,1.)-- (1.,1.);
			\draw [line width=1.2pt] (1.,1.)-- (1.,0.);
			\draw [line width=1.2pt] (1.,0.)-- (0.,0.);
			\draw [line width=1.2pt] (0.,0.)-- (0.,3.);
			\draw [line width=1.2pt] (0.,3.)-- (3.,3.);
			\draw [line width=1.2pt] (3.,3.)-- (3.,4.);
			\draw [line width=1.2pt] (3.,4.)-- (2.,4.);
			\draw [line width=1.2pt] (2.,4.)-- (2.,5.);
			\draw [line width=1.2pt] (2.,5.)-- (1.,5.);
			\draw [line width=1.2pt] (1.,5.)-- (1.,6.);
			\draw [line width=1.2pt] (1.,6.)-- (0.,6.);
			\draw [line width=1.2pt] (0.,6.)-- (0.,3.);
			
			\draw (0,0.9) node[anchor=north west] {\small 41};
			\draw (0,1.9) node[anchor=north west] {\small 33};
			\draw (0,2.9) node[anchor=north west] {\small 25};
			\draw (0,3.9) node[anchor=north west] {\small 17};
			\draw (0,4.9) node[anchor=north west] {\small 9};
			\draw (0,5.9) node[anchor=north west] {\small 1};
			
			\draw (1,1.9) node[anchor=north west] {\small 26};
			\draw (1,2.9) node[anchor=north west] {\small 18};
			\draw (1,3.9) node[anchor=north west] {\small 10};
			\draw (1,4.9) node[anchor=north west] {\small 2};
			
			\draw (2,2.9) node[anchor=north west] {\small 11};
			\draw (2,3.9) node[anchor=north west] {\small 3};
			\end{tikzpicture}
			\caption{The set \(\mathcal{T}_u\cup s_{\alpha}(\mathcal{T}_u)\).}
			\label{fig:45}
			
		\end{center}
		\end{figure}
		\begin{figure}[H]
			\begin{center}
			
			\begin{tikzpicture}[line cap=round,line join=round,>=triangle 45,x=0.6cm,y=0.6cm]
			\clip(-0.5,-0.5) rectangle (8.5,7.5);
			
			\fill[line width=2.pt,color=zzttqq,fill=zzttqq,fill opacity=0.20] (3.,2.) -- (2.,2.) -- (2.,1.) -- (1.,1.) -- (1.,0.) -- (3.,0.) -- cycle;
			\fill[line width=2.pt,color=zzttqq,fill=zzttqq,pattern=north east lines,pattern color=zzttqq] (3.,3.) -- (4.,3.) -- (4.,0.) -- (3.,0.) -- cycle;
			\fill[line width=2.pt,color=zzttqq,fill=zzttqq,fill opacity=0.20] (4.,2.) -- (5.,2.) -- (5.,1.) -- (6.,1.) -- (6.,0.) -- (4.,0.) -- cycle;
			\fill[line width=2.pt,fill=black,fill opacity=0.10000000149011612] (0.,3.) -- (3.,3.) -- (3.,2.) -- (2.,2.) -- (2.,1.) -- (1.,1.) -- (1.,0.) -- (0.,0.) -- cycle;
			\fill[line width=2.pt,fill=black,pattern=dots,pattern color=black] (0.,3.) -- (3.,3.) -- (3.,4.) -- (2.,4.) -- (2.,5.) -- (1.,5.) -- (1.,6.) -- (0.,6.) -- cycle;
			\draw [line width=1.pt,dash pattern=on 3pt off 3pt] (0.,7.)-- (0.,0.);
			\draw [line width=1.pt,dash pattern=on 3pt off 3pt]  (0.,0.)-- (8.,0.);
			\draw [line width=1.pt,dash pattern=on 3pt off 3pt]  (8.,0.)-- (8.,7.);
			\draw[line width=1.pt,dash pattern=on 3pt off 3pt]  (8.,7.)-- (0.,7.);
			\draw [line width=1.pt,dash pattern=on 3pt off 3pt]  (1.,7.)-- (1.,0.);
			\draw [line width=1.pt,dash pattern=on 3pt off 3pt]  (2.,0.)-- (2.,7.);
			\draw [line width=1.pt,dash pattern=on 3pt off 3pt]  (3.,0.)-- (3.,7.);
			\draw[line width=1.pt,dash pattern=on 3pt off 3pt]  (4.,7.)-- (4.,0.);
			\draw[line width=1.pt,dash pattern=on 3pt off 3pt]  (5.,0.)-- (5.,7.);
			\draw [line width=1.pt,dash pattern=on 3pt off 3pt]  (6.,7.)-- (6.,0.);
			\draw [line width=1.pt,dash pattern=on 3pt off 3pt]  (7.,0.)-- (7.,7.);
			\draw[line width=1.pt,dash pattern=on 3pt off 3pt]  (0.,1.)-- (8.,1.);
			\draw [line width=1.pt,dash pattern=on 3pt off 3pt]  (8.,2.)-- (0.,2.);
			\draw[line width=1.pt,dash pattern=on 3pt off 3pt]  (0.,3.)-- (8.,3.);
			\draw [line width=1.pt,dash pattern=on 3pt off 3pt] (8.,4.)-- (0.,4.);
			\draw[line width=1.pt,dash pattern=on 3pt off 3pt]  (0.,5.)-- (8.,5.);
			\draw [line width=1.pt,dash pattern=on 3pt off 3pt]  (8.,6.)-- (0.,6.);
			\draw[line width=1.5pt]  (0.,7.)-- (8.,0.);
			\draw [line width=1.2pt] (0.,3.)-- (3.,3.);
			\draw [line width=1.2pt] (3.,3.)-- (3.,2.);
			\draw [line width=1.2pt] (3.,2.)-- (2.,2.);
			\draw [line width=1.2pt] (2.,2.)-- (2.,1.);
			\draw [line width=1.2pt] (2.,1.)-- (1.,1.);
			\draw [line width=1.2pt] (1.,1.)-- (1.,0.);
			\draw [line width=1.2pt] (1.,0.)-- (0.,0.);
			\draw [line width=1.2pt] (0.,0.)-- (0.,3.);
			\draw [line width=1.2pt] (0.,3.)-- (3.,3.);
			\draw [line width=1.2pt] (3.,3.)-- (3.,4.);
			\draw [line width=1.2pt] (3.,4.)-- (2.,4.);
			\draw [line width=1.2pt] (2.,4.)-- (2.,5.);
			\draw [line width=1.2pt] (2.,5.)-- (1.,5.);
			\draw [line width=1.2pt] (1.,5.)-- (1.,6.);
			\draw [line width=1.2pt] (1.,6.)-- (0.,6.);
			\draw [line width=1.2pt] (0.,6.)-- (0.,3.);
			
			\draw (0,0.9) node[anchor=north west] {\small 41};
			\draw (0,1.9) node[anchor=north west] {\small 33};
			\draw (0,2.9) node[anchor=north west] {\small 25};
			\draw (0,3.9) node[anchor=north west] {\small 17};
			\draw (0,4.9) node[anchor=north west] {\small 9};
			\draw (0,5.9) node[anchor=north west] {\small 1};
			
			\draw (1,1.9) node[anchor=north west] {\small 26};
			\draw (1,2.9) node[anchor=north west] {\small 18};
			\draw (1,3.9) node[anchor=north west] {\small 10};
			\draw (1,4.9) node[anchor=north west] {\small 2};
			
			\draw (2,2.9) node[anchor=north west] {\small 11};
			\draw (2,3.9) node[anchor=north west] {\small 3};
			
			\draw (4,0.9) node[anchor=north west] {\small 13};
			\draw (5,0.9) node[anchor=north west] {\small 6};
			\draw (4,1.9) node[anchor=north west] {\small 5};
			
			\draw (1,0.9) node[anchor=north west] {\small 34};
			\draw (2,0.9) node[anchor=north west] {\small 27};
			\draw (3,0.9) node[anchor=north west] {\small 20};
			
			\draw (2,1.9) node[anchor=north west] {\small 19};
			\draw (3,1.9) node[anchor=north west] {\small 12};
			
			\draw (3,2.95) node[anchor=north west] {\small 4};
			\end{tikzpicture}
			\caption{Lattice representation of the gap set \(\mathbb{N}\setminus\Gamma\).}
			\label{fig:46}
			
		\end{center}
		\end{figure}
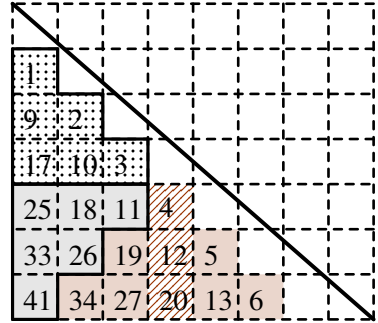
	\end{multicols}
\end{ex}

Since the polyominoes are lattice polygons, we are able to present formulas for the cardinal of the sets of supersymmetric and self-symmetric gaps.
\begin{proposition}\label{prop: formula cardinalsym}
	Let \(\Gamma=\langle\alpha,\beta\rangle\) be a numerical semigroup. Then
	
	$$
	|\mathsf{SSG}|=\Bigg\{\begin{array}{cc}
	0&\text{if \(\alpha,\beta\) are odd}\\
	(\beta-1)/2 &\text{if \(\alpha\) even}\\
	(\alpha-1)/2 &\text{if \(\beta\) even}
	\end{array}
	$$
	Moreover,
		
	$$		
	|\mathsf{SG}|=\left \{\begin{array}{ll}
	\displaystyle \sum_{j=1}^{\lfloor\frac{\alpha}{2}\rfloor-1}  \Big \lfloor \frac{j\beta}{\alpha} \Big \rfloor&\mbox{if}\ \;\mathsf{SG}=\mathcal{T}_u\\
	&\\
	\displaystyle \sum_{j=h}^{\alpha-1} \Big (\Big \lfloor \frac{j\beta}{\alpha} \Big \rfloor -\Big \lfloor \frac{\beta}{2} \Big \rfloor   \Big ) &\mbox{if}\ \;\mathsf{SG}=\mathcal{T}_r
	\end{array}
	\right.
	$$
	where $\displaystyle h=  \frac{\alpha}{2} +1$ if $\alpha$ is even, and  $\displaystyle h=\Big \lfloor \frac{\alpha}{2}\Big \rfloor $ if $\alpha$ is odd.
\end{proposition}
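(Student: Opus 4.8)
The strategy is to treat the two cardinalities independently, reducing each to a count of lattice points in an explicit triangular region of $\mathcal{L}$.

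First I would compute $|\mathsf{SSG}|$ directly from Theorem~\ref{thm:equifix}, whose equivalence $(1)\Leftrightarrow(2)$ says that a gap $g=\alpha\beta-a\alpha-b\beta$ lies in $\mathsf{SSG}$ exactly when $\alpha=2b$ or $\beta=2a$. Since $\gcd(\alpha,\beta)=1$, at most one of $\alpha,\beta$ is even, so the two conditions are mutually exclusive. If both are odd then neither equation is solvable in integers and $\mathsf{SSG}=\emptyset$. If $\alpha$ is even the only self-symmetric gaps are the points $(a,\alpha/2)$, and such a point represents a gap precisely when $a\ge 1$ and $a\alpha+(\alpha/2)\beta<\alpha\beta$, i.e.\ $a<\beta/2$; as $\beta$ is odd this gives $a\in\{1,\dots,(\beta-1)/2\}$ and hence $|\mathsf{SSG}|=(\beta-1)/2$. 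The case $\beta$ even is identical after exchanging $\alpha$ and $\beta$, yielding $(\alpha-1)/2$. This disposes of the first formula.

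For $|\mathsf{SG}|$ I would count the lattice points of $\mathcal{T}_u$ and of $\mathcal{T}_r$ by slicing each region into horizontal rows $b=\text{const}$ and adding up the number of admissible abscissae. The key simplification is that $\gcd(\alpha,\beta)=1$ forces the diagonal $a\alpha+b\beta=\alpha\beta$ to avoid every interior lattice point, so for a fixed row the number $\beta(\alpha-b)/\alpha$ is never an integer and the number of gaps in that row with $a\ge 1$ is exactly $\lfloor \beta(\alpha-b)/\alpha\rfloor$. Putting $k=\alpha-b$ and recalling that $\mathcal{T}_u$ is cut out by $b\ge\lfloor\alpha/2\rfloor+1$, i.e.\ $1\le k\le\lceil\alpha/2\rceil-1$, I obtain
\[
|\mathcal{T}_u|=\sum_{k=1}^{\lceil\alpha/2\rceil-1}\Big\lfloor\frac{k\beta}{\alpha}\Big\rfloor,
\]
and for even $\alpha$ the upper limit equals $\lfloor\alpha/2\rfloor-1$, which is the stated expression.

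To treat $\mathcal{T}_r$ I would reuse the same row count but retain only the columns lying strictly to the right of $x=\lfloor\beta/2\rfloor$: in the row indexed by $k=\alpha-b$ the number of surviving gaps is $\lfloor k\beta/\alpha\rfloor-\lfloor\beta/2\rfloor$ whenever this is positive and $0$ otherwise, so that $|\mathcal{T}_r|=\sum_k\max\{0,\lfloor k\beta/\alpha\rfloor-\lfloor\beta/2\rfloor\}$. Summing only over the rows that actually contribute produces the second formula, the index $h$ being the first value of $j$ for which $\lfloor j\beta/\alpha\rfloor$ exceeds $\lfloor\beta/2\rfloor$. I expect this threshold determination to be the main obstacle and the only delicate point: one must verify that $j=\alpha/2$ contributes a vanishing term when $\alpha$ is even---so that the summation may legitimately begin at $\alpha/2+1$---and pin down the corresponding first nonnegative index when $\alpha$ is odd. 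Once $h$ is fixed, everything reduces to elementary floor-function bookkeeping, so the proof rests entirely on this boundary analysis together with the non-integrality guaranteed by $\gcd(\alpha,\beta)=1$.
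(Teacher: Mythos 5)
Your overall strategy---slicing the triangles into horizontal rows and counting $\lfloor k\beta/\alpha\rfloor$ lattice points per row, using $\gcd(\alpha,\beta)=1$ to rule out points on the diagonal---is essentially the same as the paper's, which packages the identical count via the right-border points $RB(\mathcal{T}_u)$, $RB(\mathcal{T}_r)$ of the form $(\lfloor j\beta/\alpha\rfloor,\alpha-j)$. Your computation of $|\mathsf{SSG}|$ is complete and correct (the paper disposes of it by citing Theorem \ref{thm:equifix}).

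The genuine gap is precisely the step you defer as ``the main obstacle'': reconciling your raw row counts with the stated summation limits. This is not routine floor-function bookkeeping, and as written your argument does not establish the displayed formulas. For $\mathcal{T}_u$ your count gives upper limit $\lceil\alpha/2\rceil-1$, and you verify agreement with the stated limit $\lfloor\alpha/2\rfloor-1$ only for even $\alpha$; for odd $\alpha$ your limit equals $\lfloor\alpha/2\rfloor$, one more than stated, and the discrepancy is real: for $\Gamma=\langle 7,8\rangle$ the paper's own example has $|\mathcal{T}_u|=6=\sum_{k=1}^{3}\lfloor 8k/7\rfloor$, whereas $\sum_{j=1}^{\lfloor 7/2\rfloor-1}\lfloor 8j/7\rfloor=3$. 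For $\mathcal{T}_r$ with $\alpha$ odd, the first term $j=\lfloor\alpha/2\rfloor$ of the stated sum does not vanish in general: again for $\langle 7,8\rangle$ one gets $\lfloor 3\cdot 8/7\rfloor-\lfloor 8/2\rfloor=-1$, so starting at $h=\lfloor\alpha/2\rfloor$ yields $2$ while $|\mathcal{T}_r|=3$; the first index with a nonnegative (indeed positive) contribution is $\lceil\alpha/2\rceil$. So your $\max\{0,\cdot\}$ expression and your $\lceil\alpha/2\rceil-1$ limit are the correct counts, but the proof is incomplete until you actually carry out this boundary analysis---and doing so shows that in the odd-$\alpha$ case the counts do not match the limits you were asked to prove, which you would need to confront rather than leave as an expected formality.
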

\begin{proof}
	The formula for \(|\mathsf{SSG}|\) is a direct consequence of Theorem \ref{thm:equifix}. So let us prove the formula for \(|\mathsf{SG}|\). We start by showing that \((a,b)\in RB(\mathcal{T}_u)\)~resp.~\((a,b)\in RB(\mathcal{T}_r)\) if it is of the form \((\lfloor j\beta /\alpha\rfloor,\alpha-j)\) with \(j=1,\dots,\lfloor \alpha/2\rfloor-1\),~resp.~ \(j=h,\dots,\alpha-1\), where $h= \alpha/2  +1$ if $\alpha$ is even, and  $h=\lfloor \alpha/2 \rfloor$ if $\alpha$ is odd. Obviously, \((\lfloor j\beta /\alpha\rfloor,\alpha-j)\) lies always on the right-hand sided border, since 
	
	\[\alpha\beta -\lfloor j\beta /\alpha\rfloor\alpha-(\alpha-j)\beta \geq 0\quad\text{and}\quad \alpha\beta -(\lfloor j\beta /\alpha\rfloor+1)\alpha-(\alpha-j)\beta \leq 0.\]
	
	Now, observe that by definition the points on \(RB(\mathcal{T}_u)\) have second coordinate varying from \(\alpha-1\) to \(\alpha-\lfloor\alpha/2\rfloor +1\). For the points in  \(RB(\mathcal{T}_r)\) we need to distinguish two cases: if \(\alpha\) is even, then the points with second coordinate \(\alpha-\alpha/2\) are self-symmetric gaps so they do not belong to \(\mathcal{T}_r\) and we need to start the summation running from \(\alpha/2 +1\) on. If \(\alpha\) is odd, then there are no self-symmetric gaps of the previous form. The unique self-symmetric gaps may be those with coordinates \((\beta/2,\lfloor \alpha/2 \rfloor)\), but if one of them is actually a border point, then it adds zero in the summation.
\end{proof}

\subsection{Fundamental gaps vs supersymmetric gaps and self-symmetric gaps}\label{subsec:fundvssym}

The fundamental gaps for semigroups of the form \(\Gamma=\langle\alpha,\beta \rangle\) are explicitly described by Rosales in \cite[Theorem 9]{Rosales}. As part of the proof, he characterized the elements \(x\in\mathbb{N}\setminus\Gamma\) such that \(2x\in\Gamma\). From this characterization we are able to prove the following.
\begin{proposition}\label{prop:red}
	Let \(\Gamma=\langle\alpha,\beta \rangle\), and let \(x\in\mathbb{N}\setminus\Gamma\) be a gap of $\Gamma$. Then the following are equivalent:
	\begin{enumerate}
		\item \(2x\in\Gamma\); 
		\item \(x=\alpha\beta-a\alpha-b\beta\) with \(1\leq a\leq\beta/2\) and \(1\leq b\leq \alpha/2\);
		\item \(W(x)\leq 0\).
	\end{enumerate}
\end{proposition}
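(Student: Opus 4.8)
The plan is to establish the two equivalences \((2)\Leftrightarrow(3)\) and \((1)\Leftrightarrow(2)\) separately; throughout I use that the writing \(x=\alpha\beta-a\alpha-b\beta\) with \(1\le a\le\beta-1\) and \(1\le b\le\alpha-1\) is the \emph{unique} such representation of a gap, which follows from \(\gcd(\alpha,\beta)=1\). Two elementary remarks will recur: since \(\gcd(\alpha,\beta)=1\) and \(1\le a\le\beta-1\), one never has \(a\alpha=b\beta\); and a gap being positive forces \(a\alpha+b\beta<\alpha\beta\), so one cannot have both \(a>\beta/2\) and \(b>\alpha/2\) (else \(a\alpha+b\beta>\tfrac{\beta}{2}\alpha+\tfrac{\alpha}{2}\beta=\alpha\beta\)).

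For \((2)\Leftrightarrow(3)\) I would feed Lemma \ref{lem:minsyz} into Proposition \ref{prop:wilfgap}. Writing \(h_0=\alpha\beta-b\beta\) and \(h_1=\alpha\beta-a\alpha\), Proposition \ref{prop:wilfgap} gives \(-W(x)=a(\alpha-2b)\) when \(\min\{h_0,h_1\}=h_0\) and \(-W(x)=b(\beta-2a)\) when \(\min\{h_0,h_1\}=h_1\). The point requiring care is that Lemma \ref{lem:minsyz} resolves \(\min\{h_0,h_1\}\) only in the two mixed regimes; in the regime \(a\le\lfloor\beta/2\rfloor\) and \(b\le\lfloor\alpha/2\rfloor\)---which is exactly condition \((2)\)---one determines the minimum by hand via \(h_0<h_1\Leftrightarrow a\alpha<b\beta\), a strict inequality by the first remark. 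If \((2)\) holds then \(2b\le\alpha\) and \(2a\le\beta\), so whichever branch occurs yields \(-W(x)=a(\alpha-2b)\ge0\) or \(-W(x)=b(\beta-2a)\ge0\), hence \(W(x)\le0\). For the converse I argue by contraposition: if \(\neg(2)\), then by the second remark exactly one of \(a>\beta/2\), \(b>\alpha/2\) holds; if \(b>\alpha/2\) then \(a\le\beta/2\), so \(a\alpha\le\tfrac{\alpha\beta}{2}<b\beta\) forces the first branch with \(\alpha-2b<0\) and thus \(W(x)>0\), and symmetrically when \(a>\beta/2\).

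For \((1)\Leftrightarrow(2)\) the technical point is that substituting \(2x=\alpha\beta-(2a-\beta)\alpha-2b\beta\) fails, because neither this nor its dual writing keeps both coefficients below \(\beta-1\) and \(\alpha-1\). I would circumvent this using the symmetry of \(\Gamma=\langle\alpha,\beta\rangle\): with Frobenius number \(F=\alpha\beta-\alpha-\beta\) one has \(2x\in\Gamma\Leftrightarrow F-2x\notin\Gamma\). A direct computation gives
\[F-2x=\alpha\beta-(\beta+1-2a)\,\alpha-(\alpha+1-2b)\,\beta,\]
and here, because \(a\ge1\) and \(b\ge1\), the coefficients automatically satisfy \(\beta+1-2a\le\beta-1\) and \(\alpha+1-2b\le\alpha-1\). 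I then invoke the elementary fact that a number \(\alpha\beta-A\alpha-B\beta\) with \(A\le\beta-1\) and \(B\le\alpha-1\) lies in \(\Gamma\) precisely when \(A\le0\) or \(B\le0\): if \(A\le0\) it equals \((-A)\alpha+(\alpha-B)\beta\) with nonnegative coefficients, if \(B\le0\) it equals \((\beta-A)\alpha+(-B)\beta\), and otherwise it is a gap by the uniqueness above. Applied to \(F-2x\), this reads \(F-2x\in\Gamma\Leftrightarrow(\beta+1-2a\le0 \text{ or } \alpha+1-2b\le0)\Leftrightarrow(a>\beta/2 \text{ or } b>\alpha/2)\Leftrightarrow\neg(2)\), whence \(2x\in\Gamma\Leftrightarrow(2)\); this both proves \((1)\Leftrightarrow(2)\) and recovers Rosales's characterization cited above.

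The step I expect to be the main obstacle is pinning down \(\min\{h_0,h_1\}\) in the regime not covered by Lemma \ref{lem:minsyz}, together with the parity bookkeeping at the boundary---for instance \(\alpha\) even with \(b=\alpha/2\), which by Theorem \ref{thm:equifix} is exactly where \(W(x)=0\). The conditions \(a\le\beta/2\) versus \(a\le\lfloor\beta/2\rfloor\) must be reconciled, but they agree for integers and cause no genuine difficulty once the strict inequality \(a\alpha\ne b\beta\) is in hand.
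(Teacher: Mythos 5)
Your proof is correct, but it takes a genuinely more self-contained route than the paper, which disposes of the proposition in one line: the equivalence \((1)\Leftrightarrow(2)\) is quoted directly from \cite[Proposition~4]{Rosales}, and \((2)\Leftrightarrow(3)\) is declared a straightforward computation from Proposition \ref{prop:wilfgap}. Your treatment of \((2)\Leftrightarrow(3)\) makes explicit a point the paper glosses over: Lemma \ref{lem:minsyz} pins down \(\min\{h_0,h_1\}\) only in the two mixed regimes, and in the regime of condition \((2)\) one must either determine the minimum by hand via \(a\alpha\neq b\beta\) or, as you note, observe that both branches of Proposition \ref{prop:wilfgap} give \(-W(x)=a(\alpha-2b)\ge 0\) or \(b(\beta-2a)\ge 0\) anyway; your contrapositive for the converse is also the right bookkeeping. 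For \((1)\Leftrightarrow(2)\) you replace the external citation by an argument through the symmetry \(n\in\Gamma\Leftrightarrow F-n\notin\Gamma\) of a two-generated semigroup, which is a clean alternative that simultaneously re-proves Rosales's characterization; what it costs is reliance on that symmetry property (standard, but itself a theorem), what it buys is independence from \cite{Rosales}. One small imprecision: in your ``elementary fact'', when \(1\le A\le\beta-1\) and \(1\le B\le\alpha-1\) the number \(\alpha\beta-A\alpha-B\beta\) need not be a gap---it can be negative---but it is still not in \(\Gamma\) (a representation \(u\alpha+v\beta\) with \(u,v\ge0\) would force \(\beta\mid\beta-A-u\) with \(0<\beta-A-u<\beta\)), which is all your argument actually needs.
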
  
\begin{proof}
	The equivalence \((1)\Leftrightarrow (2)\) is \cite[Proposition~4]{Rosales}, and \((2)\Leftrightarrow (3)\) is a straightforward computation from the formula given in Proposition \ref{prop:wilfgap}.
\end{proof}

In particular, non-positive Wilf number is a necessary condition for a gap to be a fundamental gap. 
\begin{corollary}
	Let \(\Gamma=\langle\alpha,\beta \rangle\), and let \(x\in\mathbb{N}\setminus\Gamma\) be a gap of $\Gamma$. If \(x\in\mathcal{FG}(\Gamma)\), then \(W(x)\leq 0\).
\end{corollary}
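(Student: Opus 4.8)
The plan is to observe that membership in $\mathcal{FG}(\Gamma)$ already contains the condition equivalent to non-positivity of the Wilf number, so the statement reduces to a single application of Proposition \ref{prop:red}. Recall from the definition of fundamental gaps given in the introduction that
\[
\mathcal{FG}(\Gamma)=\{g\in\mathbb{N}\setminus\Gamma:\;\{2g,3g\}\subset\Gamma\}.
\]
Hence, if $x\in\mathcal{FG}(\Gamma)$, then by definition both $2x\in\Gamma$ and $3x\in\Gamma$; in particular the first of these gives $2x\in\Gamma$.

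The second step is to invoke Proposition \ref{prop:red}, whose equivalence $(1)\Leftrightarrow(3)$ asserts precisely that, for a gap $x$ of $\Gamma$, one has $2x\in\Gamma$ if and only if $W(x)\leq 0$. Applying this equivalence to our gap $x$ immediately yields $W(x)\leq 0$, which is the desired conclusion. Note that we use only the weaker of the two defining conditions of $\mathcal{FG}(\Gamma)$: the hypothesis $3x\in\Gamma$ plays no role here and is simply discarded.

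There is essentially no obstacle to overcome: the corollary is a direct consequence of Proposition \ref{prop:red}, where all the genuine content resides. That proposition in turn rests on Rosales's characterization \cite[Proposition~4]{Rosales} of the gaps $x$ with $2x\in\Gamma$ together with the explicit Wilf-number formula of Proposition \ref{prop:wilfgap}. Consequently no further computation is required, and the proof is just the two-line deduction above.
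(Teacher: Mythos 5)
Your proof is correct and is exactly the argument the paper intends: the corollary is stated as an immediate consequence of Proposition \ref{prop:red}, using only the condition $2x\in\Gamma$ from the definition of $\mathcal{FG}(\Gamma)$ together with the equivalence $(1)\Leftrightarrow(3)$. Nothing is missing and no further comparison is needed.
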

Notice that the converse is not true: consider the semigroup \(\Gamma=\langle 8, 13 \rangle\) and take the gap \(25\), then \(W(25)=-9<0\) but \(25\notin\mathcal{FG}(\Gamma)\).

We recall that a subset $X$ of  the set of nonnegative integers \(H\)--determines a numerical semigroup \(\Gamma\) if \(\Gamma\) is the maximal numerical semigroup with respect to set inclusion such that \(X\subset\mathbb{N}\setminus\Gamma\). 
Under this description of \(\Gamma\), the set of fundamental gaps is the smallest subset \(H\)--determining \(\Gamma\). Moreover, Rosales et al. \cite{Rosetal} proved the following important result about minimality of the fundamental gaps with respect the \(H\)--determinacy.
\begin{proposition}\cite[Corollary~7]{Rosetal}
	Let \(\Gamma\) be a numerical semigroup and let \(X\subset\mathbb{N}\setminus\Gamma\). The set \(X\) \(H\)--determines \(\Gamma\) if and only if \(\mathcal{FG}(\Gamma)\subset X\).
\end{proposition}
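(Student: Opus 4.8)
The plan is to reduce the statement to two elementary facts about divisibility in the gap set and then read off both implications. \emph{Fact 1:} for every numerical semigroup $\Gamma'$ the gap set $\mathbb{N}\setminus\Gamma'$ is closed under divisors, since $d\mid x$ together with $d\in\Gamma'$ would force $x=(x/d)\,d\in\Gamma'$; consequently, for any $X\subseteq\mathbb{N}$ we have $X\subseteq\mathbb{N}\setminus\Gamma'$ if and only if $\mathcal{D}(X)\subseteq\mathbb{N}\setminus\Gamma'$, i.e. a semigroup avoids $X$ exactly when it avoids $\mathcal{D}(X)$. \emph{Fact 2:} a gap $g$ is a fundamental gap precisely when it is maximal for divisibility among the gaps of $\Gamma$: the condition $\{2g,3g\}\subseteq\Gamma$ is equivalent to $kg\in\Gamma$ for all $k\geq 2$ (write every $k\geq 2$ as $2a+3b$ with $a,b\geq 0$), that is, to the absence of proper multiples of $g$ among the gaps. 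Together with the identity $\Gamma=\mathbb{N}\setminus\mathcal{D}(\mathcal{FG}(\Gamma))$ recalled in the introduction, these are what I would use throughout.

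First I would treat the implication that $\mathcal{FG}(\Gamma)\subseteq X$ forces $X$ to $H$-determine $\Gamma$. From $\mathcal{FG}(\Gamma)\subseteq X\subseteq\mathbb{N}\setminus\Gamma$ and Fact~1 one gets $\mathbb{N}\setminus\Gamma=\mathcal{D}(\mathcal{FG}(\Gamma))\subseteq\mathcal{D}(X)\subseteq\mathbb{N}\setminus\Gamma$, so $\mathcal{D}(X)=\mathbb{N}\setminus\Gamma$. If now $\Gamma'$ is any numerical semigroup with $X\subseteq\mathbb{N}\setminus\Gamma'$, then by Fact~1 it avoids $\mathcal{D}(X)=\mathbb{N}\setminus\Gamma$, i.e. $\mathbb{N}\setminus\Gamma\subseteq\mathbb{N}\setminus\Gamma'$ and hence $\Gamma'\subseteq\Gamma$. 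Since $\Gamma$ itself avoids $X$, it is the maximum such semigroup, as required.

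For the converse I would argue by contradiction: assuming $X$ $H$-determines $\Gamma$, fix $g\in\mathcal{FG}(\Gamma)$ and aim at $g\in X$. The decisive step is to show $g\in\mathcal{D}(X)$, for which I would construct a numerical semigroup that contains $g$ yet avoids $\mathcal{D}(X)$, contradicting the maximality of $\Gamma$ (as $g$ is a gap of $\Gamma$). This construction is where I expect the real work to lie. The point is that $\mathcal{D}(X)$ is divisor-closed, so $g\notin\mathcal{D}(X)$ would give $kg\notin\mathcal{D}(X)$ for every $k\geq 1$; setting $F=\max\mathcal{D}(X)$, the set $S=\langle g\rangle\cup\{n\in\mathbb{N}:n>F\}$ is then additively closed, has finite complement, contains the consecutive integers $F+1,F+2$, and avoids $\mathcal{D}(X)$, so it is a numerical semigroup avoiding $X$ with $g\in S\setminus\Gamma$ — the desired contradiction. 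Hence $g\in\mathcal{D}(X)$, so $g\mid x$ for some gap $x\in X$; by Fact~2 the divisibility-maximality of $g$ forces $x=g$, whence $g\in X$. This gives $\mathcal{FG}(\Gamma)\subseteq X$ and closes the argument.
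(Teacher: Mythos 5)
Your proposal is correct; note, however, that the paper offers no proof of this statement at all --- it is imported verbatim as \cite[Corollary~7]{Rosetal} --- so what you have produced is a self-contained elementary proof of a cited result rather than an alternative to an argument in the text. Both directions check out. Your Fact~1 (gap sets are divisor-closed) and Fact~2 (fundamental gaps are exactly the divisibility-maximal gaps, via $k=2a+3b$) are correct, and the forward implication then follows cleanly from the identity $\Gamma=\mathbb{N}\setminus\mathcal{D}(\mathcal{FG}(\Gamma))$; be aware that this identity is itself only \emph{recalled} in the introduction without proof, though it follows from your two Facts by taking, for each gap $x$, a divisibility-maximal element of the finite set of gap-multiples of $x$. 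The real content is your converse, and the construction $S=\langle g\rangle\cup\{n\in\mathbb{N}:n>F\}$ with $F=\max\mathcal{D}(X)$ does the job: it is additively closed, cofinite, disjoint from $\mathcal{D}(X)$ because $\mathcal{D}(X)$ is divisor-closed and $g\notin\mathcal{D}(X)$ forces $kg\notin\mathcal{D}(X)$, and it contains $g$, contradicting $S\subseteq\Gamma$. The only loose end is the degenerate case $X=\emptyset$, where $F$ is undefined; there $X$ $H$-determines only $\Gamma=\mathbb{N}$ and $\mathcal{FG}(\mathbb{N})=\emptyset$, so the equivalence holds trivially. With that one-line remark added, your argument is a complete and correct proof of the cited proposition.
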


On the other hand, we have proven in Theorem \ref{thm:partition} that \(\mathsf{SG}\cup\mathsf{SSG}\) completely determines \(\Gamma\). In this way, it is natural to compare \(\mathsf{SG}\cup\mathsf{SSG}\) with \(\mathcal{FG}(\Gamma)\). However, this comparison is not set-theoretically possible since we do not have inclusion relations, i.e. \(\mathsf{SG}\cap\mathcal{FG}(\Gamma)=\emptyset\) and \(\mathsf{SSG}\subset\{x\in\mathbb{N}\setminus\Gamma:\;2x\in\Gamma\}\) but in general \(\mathsf{SSG}\nsubseteqq\mathcal{FG}(\Gamma)\) as Example \ref{ex:grande} shows; this example also shows that if \(\mathsf{SG}=\mathcal{T}_u\)~resp.~\(\mathcal{T}_r\) then \(s_{\alpha}(\mathsf{SG})\)~resp.~\(s_{\beta}(\mathsf{SG})\) does not need to be contained in \(\mathcal{FG}(\Gamma)\).

This means that, in general, the set \(\mathsf{SG}\cup\mathsf{SSG}\) does not \(H\)--determines \(\Gamma\), but it determines \(\Gamma\) in the sense that \(\Gamma\) can be recovered from \(\mathsf{SG}\cup\mathsf{SSG}\). Moreover, the polyomino game cannot be recovered from the set of fundamental gaps. 
Then, the most we can do is to compare the cardinality of both sets:
\begin{proposition}\label{prop:uff}
	Let \(\Gamma=\langle\alpha,\beta \rangle\) be a numerical semigroup with $\alpha >2$, then
	\[
	|\mathsf{SG}\cup\mathsf{SSG}|\leq|\mathcal{FG}(\Gamma)|.
	\]
\end{proposition}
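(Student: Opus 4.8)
I would reduce everything to counting gaps in the lattice box
\[
R:=\{x\in\mathbb{N}\setminus\Gamma : 2x\in\Gamma\}=\{(a,b):1\le a\le\lfloor\beta/2\rfloor,\ 1\le b\le\lfloor\alpha/2\rfloor\},
\]
the second description coming from the equivalence $(1)\Leftrightarrow(2)$ of Proposition \ref{prop:red}. Every point of this box is a gap, since $a\alpha+b\beta\le\lfloor\beta/2\rfloor\alpha+\lfloor\alpha/2\rfloor\beta<\alpha\beta$ (equality would force $\alpha,\beta$ both even, contradicting $\gcd(\alpha,\beta)=1$). Two cardinality identities feed the comparison. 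Because $\mathcal{T}_u,\mathcal{T}_r$ are \emph{open} triangles, they avoid the lines $W=0$, so $\mathsf{SG}\cap\mathsf{SSG}=\emptyset$ and $|\mathsf{SG}\cup\mathsf{SSG}|=\min(|\mathcal{T}_u|,|\mathcal{T}_r|)+|\mathsf{SSG}|$. On the other hand, by Lemmas \ref{lem:sym1} and \ref{lem:sym2} the points of $\mathcal{T}_u,\mathcal{T}_r$ have $W>0$ while their reflections $s_\alpha(\mathcal{T}_u),s_\beta(\mathcal{T}_r)$ have $W<0$; hence the partition of Theorem \ref{thm:partition} restricts to $R=s_\alpha(\mathcal{T}_u)\sqcup\mathsf{SSG}\sqcup s_\beta(\mathcal{T}_r)$. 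As the reflections are bijections, $|R|=|\mathcal{T}_u|+|\mathcal{T}_r|+|\mathsf{SSG}|$, and combining the two identities gives
\[
|\mathsf{SG}\cup\mathsf{SSG}|=|R|-\max(|\mathcal{T}_u|,|\mathcal{T}_r|).
\]

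Next I would bring in the fundamental gaps. The Corollary following Proposition \ref{prop:red} gives $\mathcal{FG}(\Gamma)\subseteq R$, and by definition $\mathcal{FG}(\Gamma)=\{x\in R:3x\in\Gamma\}$, so $|\mathcal{FG}(\Gamma)|=|R|-N$ where $N:=\#\{x\in R:3x\notin\Gamma\}$. Comparing with the display above, the proposition becomes \emph{equivalent} to the single inequality $N\le\max(|\mathcal{T}_u|,|\mathcal{T}_r|)$: the task is to bound the number of gaps with $2x\in\Gamma$ but $3x\notin\Gamma$.

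The key step is an exact formula for $N$. Writing $x=\alpha\beta-a\alpha-b\beta\in R$ and using the Apéry criterion for membership in $\Gamma=\langle\alpha,\beta\rangle$ (namely $n\in\Gamma$ iff $n\ge w_{n\bmod\alpha}$, with $w_i$ the least element of $\Gamma$ in the class $i$ modulo $\alpha$), a direct computation of $w_{3x\bmod\alpha}$ shows that $3x\notin\Gamma$ if and only if $a>\bigl(3-\lceil 3b/\alpha\rceil\bigr)\beta/3$. For $(a,b)\in R$ one has $b\le\lfloor\alpha/2\rfloor$, so $\lceil3b/\alpha\rceil\in\{1,2\}$: the value $1$ forces the impossible $a>2\beta/3$, while the value $2$ (equivalently $b>\alpha/3$) gives $a>\beta/3$. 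Hence the non-fundamental points of $R$ form exactly the sub-box $\lfloor\beta/3\rfloor<a\le\lfloor\beta/2\rfloor$, $\lfloor\alpha/3\rfloor<b\le\lfloor\alpha/2\rfloor$, so that
\[
N=\Bigl(\lfloor\tfrac\alpha2\rfloor-\lfloor\tfrac\alpha3\rfloor\Bigr)\Bigl(\lfloor\tfrac\beta2\rfloor-\lfloor\tfrac\beta3\rfloor\Bigr),
\]
in agreement with $N=1$ for $\langle5,7\rangle$ and $N=2$ for $\langle7,8\rangle$.

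Finally I would combine this with the sum formula of Proposition \ref{prop: formula cardinalsym}. Restricting the sum expressing $|\mathcal{T}_u|$ to the rows $\lfloor\alpha/2\rfloor<b\le\lfloor2\alpha/3\rfloor$, each of which contributes $\lfloor(\alpha-b)\beta/\alpha\rfloor\ge\lfloor\beta/3\rfloor$ points, yields $|\mathcal{T}_u|\ge(\lfloor2\alpha/3\rfloor-\lfloor\alpha/2\rfloor)\lfloor\beta/3\rfloor$, and symmetrically for $|\mathcal{T}_r|$. Since $N\approx\alpha\beta/36$ whereas each triangle has size $\approx\alpha\beta/8$, the inequality $N\le\max(|\mathcal{T}_u|,|\mathcal{T}_r|)$ holds with ample room once $\alpha,\beta$ are not too small. \emph{The main obstacle is this last step}: converting the clean asymptotic gap into a rigorous bound valid for all $\alpha>2$ requires a careful comparison of the competing floor-sums together with a direct check of the finitely many small $\alpha$ where the crude estimates are not yet decisive. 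The hypothesis $\alpha>2$ is genuinely needed: for $\alpha=2$ one has $\mathcal{T}_u=\emptyset$ and $\mathsf{SSG}$ exhausts the gaps, the displayed identity degenerates, and the inequality fails (e.g. $\Gamma=\langle2,7\rangle$).
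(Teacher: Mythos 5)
Your reduction is correct and is genuinely different from (and in some ways sharper than) the paper's argument: using the partition from Theorem \ref{thm:partition} together with Proposition \ref{prop:red} you obtain the exact identities $|\mathsf{SG}\cup\mathsf{SSG}|=|R|-\max(|\mathcal{T}_u|,|\mathcal{T}_r|)$ and $|\mathcal{FG}(\Gamma)|=|R|-N$, so the proposition becomes the single combinatorial inequality $N\le\max(|\mathcal{T}_u|,|\mathcal{T}_r|)$, and your product formula $N=\bigl(\lfloor\alpha/2\rfloor-\lfloor\alpha/3\rfloor\bigr)\bigl(\lfloor\beta/2\rfloor-\lfloor\beta/3\rfloor\bigr)$ checks out (one sees it directly from the representation $3x=\alpha\beta-(3a-k\beta)\alpha-(3b-(2-k)\alpha)\beta$, where only $k=1$ can occur inside $R$). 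The paper instead bounds $\min\{|\mathcal{T}_u|,|\mathcal{T}_r|\}$ by $\tfrac18(\alpha-1)(\beta-1)$ and compares with Rosales' closed count of $|\mathcal{FG}(\Gamma)|$; your route isolates the exact deficit $|\mathcal{FG}(\Gamma)|-|\mathsf{SG}\cup\mathsf{SSG}|=\max(|\mathcal{T}_u|,|\mathcal{T}_r|)-N$.

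However, the proof is not complete: the inequality $N\le\max(|\mathcal{T}_u|,|\mathcal{T}_r|)$, which you correctly identify as the whole content of the statement after your reduction, is only argued asymptotically. This is a genuine gap for two reasons. First, ``a direct check of the finitely many small $\alpha$'' is not a finite verification: for each fixed small $\alpha$ there are infinitely many $\beta$, so each such $\alpha$ requires its own uniform argument in $\beta$. Second, there is in fact no ``ample room'': the inequality is an equality for $\Gamma=\langle 4,5\rangle$ (there $N=1=|\mathcal{T}_u|=|\mathcal{T}_r|$), and your lower bound $|\mathcal{T}_u|\ge\bigl(\lfloor 2\alpha/3\rfloor-\lfloor\alpha/2\rfloor\bigr)\lfloor\beta/3\rfloor$ vanishes identically for $\alpha=4$, forcing you onto the bound for $|\mathcal{T}_r|$, where the required comparison
\[
\Bigl(\Bigl\lfloor\tfrac{2\beta}{3}\Bigr\rfloor-\Bigl\lfloor\tfrac{\beta}{2}\Bigr\rfloor\Bigr)\Bigl\lfloor\tfrac{\alpha}{3}\Bigr\rfloor\;\ge\;\Bigl(\Bigl\lfloor\tfrac{\alpha}{2}\Bigr\rfloor-\Bigl\lfloor\tfrac{\alpha}{3}\Bigr\rfloor\Bigr)\Bigl(\Bigl\lfloor\tfrac{\beta}{2}\Bigr\rfloor-\Bigl\lfloor\tfrac{\beta}{3}\Bigr\rfloor\Bigr)
\]
is a nontrivial floor-function inequality that is tight and is nowhere established. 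Until that inequality (or the analogous one for $\mathcal{T}_u$, case by case on residues of $\alpha,\beta$) is actually proved, the argument does not yield the proposition.
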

\begin{proof}
	We will distinguish three cases depending on the parity of the semigroup generators.
\medskip
		
	\noindent Case (A): If $\alpha$ and $\beta$ are both of them odd numbers, then $\mathsf{SSG}=\emptyset$ and
	
	\begin{equation}\label{eqn:auxtriangle}
	\min \{|\tau_u|,|\tau_r|\} \leq \frac{1}{8}(\alpha -1)(\beta -1).
	\end{equation}
	
	In order to prove Equation \eqref{eqn:auxtriangle} we observe that 
	
	$$
	|\mathbb{N}\setminus \Gamma| = \sum_{j=1}^{\alpha-1}\Big \lfloor \frac {j\beta}{\alpha} \Big \rfloor =\frac{1}{2}(\alpha -1)(\beta -1) = 2 \Big \lfloor \frac {\alpha}{2} \Big \rfloor  \Big \lfloor \frac {\beta}{2} \Big \rfloor = 2 \cdot |\{x \in \mathbb{N}\setminus \Gamma : 2x\in \Gamma\}|.
	$$
	Therefore,  $|\tau_u|+|\tau_r|= |\mathbb{N}\setminus \Gamma| -\frac{1}{4}(\alpha -1)(\beta -1)=\frac{1}{4}(\alpha -1)(\beta -1)$. This give us directly Equation \eqref{eqn:auxtriangle}. Now, in view of \cite[Corollary~11]{Rosales} it is enough to show that
	
	$$
	\frac{1}{4}(\alpha -1)(\beta -1) - \Big \lceil \frac{\alpha -3}{6}  \Big \rceil \Big \lceil \frac{\beta -3}{6}  \Big \rceil \geq \frac{1}{8}(\alpha -1)(\beta -1),
	$$
	which is equivalent to the inequality
	
	$$
	\frac{1}{8}(\alpha -1)(\beta -1) \geq  \Big \lceil \frac{\alpha -3}{6}  \Big \rceil \Big \lceil \frac{\beta -3}{6}  \Big \rceil .
	$$
	This is true: since 
	
	\begin{align*}
	\Big \lceil \frac{\alpha -3}{6}  \Big \rceil \Big \lceil \frac{\beta -3}{6}  \Big \rceil &= \Big (\Big \lfloor \frac{\alpha -3}{6}  \Big \rfloor+1 \Big ) \Big (\Big \lfloor \frac{\beta -3}{6}  \Big \rfloor + 1\Big )\leq
	\frac{1}{36} (\alpha \beta +3\alpha + 3\beta+9),
	\end{align*}
	we just need to realize that
	
	$$
	\frac{1}{36} (\alpha \beta +3\alpha + 3\beta+9) \leq \frac{1}{8}(\alpha \beta - \alpha -\beta +1),
	$$
	which leads to the inequality
	
	$$
	7\alpha\beta-15\alpha-15\beta-9\geq 0.
	$$
	This holds for $\alpha =3$ and $\beta \geq 11$ odd as well as for any $\alpha \geq 5 $ odd and $\beta>\alpha$ odd. The cases $\alpha=3, \beta =5$ and $\alpha=3, \beta =7$ must be treated separately, see Figure \ref{fig:test}. In the first case, an easy computation shows that $|\mathcal{FG}(\Gamma)|=2$ and $|\tau_u|=1$, $|\tau_r|=1$, and the result follows; in the second case, we have that $|\mathcal{FG}(\Gamma)|=3$, and $|\tau_u|=2$, $|\tau_r|=1$, so the result remains also true.

	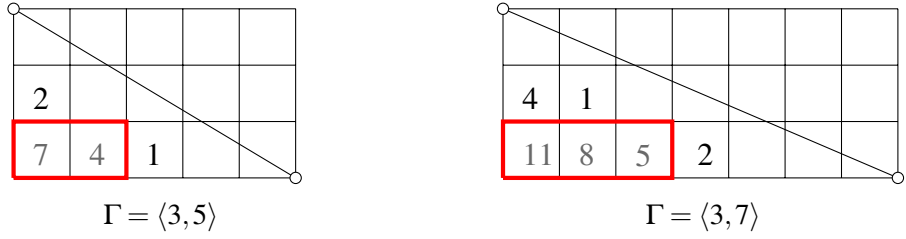
\begin{figure}
		\begin{center}

		\begin{subfigure}{.4\textwidth}
			\begin{tikzpicture}[scale=0.75]
			\draw[] (0,0) grid [step=1cm](5,3);
			\draw[] (0,3) -- (5,0);
			\draw[ultra thick,red] (0,0) -- (2,0) -- (2,1) -- (0,1) -- (0,0);
			\draw[fill=white] (0,3) circle [radius=0.1]; 
			\draw[fill=white] (5,0) circle [radius=0.1]; 
			
			\node [below right][DimGray] at (0.15,0.8) {$7$};
			\node [below right][DimGray] at (1.15,0.8) {$4$};
			\node [below right] at (2.15,0.8) {$1$};
			
			\node [below right] at (0.15,1.8) {$2$};
			
			
			\end{tikzpicture}
			\caption*{$\Gamma=\langle 3,5\rangle$ \ \ \ \ \ \ \ \ \ \ \ \ \ \ \ \ \ \ \ \ \ \ \ \ } \label{fig:sub1}
		\end{subfigure}	
		\begin{subfigure}{.4\textwidth}
			\begin{tikzpicture}[scale=0.75]
			\draw[] (0,0) grid [step=1cm](7,3);
			\draw[] (0,3) -- (7,0);
			\draw[ultra thick,red] (0,0) -- (3,0) -- (3,1) -- (0,1) --(0,0);
			\draw[fill=white] (0,3) circle [radius=0.1]; 
			
			\draw[fill=white] (7,0) circle [radius=0.1]; 
			
			\node [below right][DimGray] at (0.15,0.8) {$11$};
			\node [below right][DimGray] at (1.15,0.8) {$8$};
			\node [below right][DimGray] at (2.15,0.8) {$5$};
			\node [below right] at (3.25,0.8) {$2$};
			
			\node [below right] at (0.15,1.8) {$4$};
			\node [below right] at (1.15,1.8) {$1$};
			
			
			\end{tikzpicture}
			\caption*{\hspace{-0.9cm}$\Gamma=\langle 3,7\rangle$}  \label{fig:sub2}
		\end{subfigure}
		\caption{Special cases in (A) of Proposition \ref{prop:uff}.}
		\label{fig:test}
					
	\end{center}
	\end{figure}
	\vspace{0.5cm}
	
	\noindent Case (B): If $\alpha >2$ is even and $\beta>\alpha$ is odd, then $|\mathsf{SSG}|=\Big \lfloor \frac{\beta}{2}  \Big \rfloor$ and also
	
	$$
	|\tau_u|+|\tau_r|=\frac{1}{4}(\alpha-1)(\beta -1).
	$$ 
	
	By reasoning as in Case (A), it suffices to prove that
	
	$$
	\frac{1}{4}\alpha(\beta -1) - \Big \lceil \frac{\alpha -3}{6}  \Big \rceil \Big \lceil \frac{\beta -3}{6}  \Big \rceil \geq \frac{1}{8}(\alpha -1)(\beta -1) + \frac{1}{2} \Big \lfloor \frac{\beta}{2}  \Big \rfloor,
	$$
	again by \cite[Corollary~11]{Rosales}. But this leads us to Case (A) since 
	
	\[\frac{1}{4}\alpha(\beta -1)-\frac{1}{8}(\alpha -1)(\beta -1)=\frac{1}{8}(\alpha -1)(\beta -1)+\frac{\beta-1}{4}\quad\text{and}\quad \frac{\beta-1}{4}=\frac{1}{2} \Big \lfloor \frac{\beta}{2}  \Big \rfloor.\]
	
	\noindent Case (C): If $\alpha\geq 3$ is odd and $\beta>\alpha$ is even, we may repeat \emph{mutatis mutandis} the argument in Case (B), and the result follows.
	
\end{proof}

\begin{rem}
	Observe that for $\alpha =2, \beta=3$ the statement of Proposition \ref{prop:uff} holds; but this is no longer true for $\alpha=2$ and any $\beta\geq 3$ odd, since in that case $|\tau_u|=|\tau_r|=0$ and $|\mathcal{FG}(\Gamma)|=\frac{\beta-1}{2}-\big \lceil \frac{\beta -3}{6}  \big \rceil >0$.
\end{rem}

Finally, let us show in an example how all the important sets presented in this section look like.

\begin{ex}\label{ex:grande}
	Consider the numerical semigroup \(\Gamma=\langle 8,13\rangle\). In this case \(\mathsf{SG}=\mathcal{T}_u\) as we can see in Figure \ref{fig:gran}. This figure is labelled in the following manner: as usual, every lattice cell represents the gap of $\Gamma$ given by $(a,b)$, where these are the coordinates of the upper-right corner of the cell. Every cell is endowed with two numbers: the one lying on the bottom of the cell is just the corresponding gap, while the number on the top of the cell is the Wilf number of the gap.

	The figure also presents a filling code: we have shadowed the set of fundamental gaps and dotted the set of self-symmetric gaps. This makes it clear that self-symmetric gaps are not fully contained in the set of fundamental gaps neither the images by \(s_\alpha,s_\beta\) of the triangles \(\mathcal{T}_u,\mathcal{T}_r.\) The red rectangle contains the gaps $x$ such that $2x\in \Gamma$, cf. Proposition \ref{prop:red}. The polyominoes corresponding to $\mathcal{T}_r\cup s_{\alpha}(\mathcal{T}_r)$ and $\mathcal{T}_u\cup s_{\beta}(\mathcal{T}_u)$ are also distinguishable.
	
	\begin{center}
		\begin{figure}[H]\label{fig:813}
			\definecolor{ffqqtt}{rgb}{1.,0.,0.2}
			\definecolor{uququq}{rgb}{0.25098039215686274,0.25098039215686274,0.25098039215686274}
			\definecolor{zzttqq}{rgb}{0.6,0.2,0.}
			\begin{tikzpicture}[line cap=round,line join=round,>=triangle 45,x=1.0cm,y=1.0cm]
			\clip(-0.5,-0.5) rectangle (13.5,8.5);
			\draw[line width=1.pt,dashed] (0.,8.) -- (0.,0.) -- (13.,0.) -- (13.,8.) -- cycle;
			
			\fill[line width=0.pt,color=uququq,fill=uququq,fill opacity=0.20000000298023224] (0.,4.) -- (4.,4.) -- (4.,2.) -- (6.,2.) -- (6.,0.) -- (0.,0.) -- cycle;
			\draw[line width=2.pt,color=ffqqtt] (0.,4.) -- (6.,4.) -- (6.,0.) -- (0.,0.) -- cycle;
			\draw[line width=1.pt] (0.,7.) -- (1.,7.) -- (1.,6.) -- (3.,6.) -- (3.,5.) -- (4.,5.) -- (4.,4.) -- (4.,2.) -- (3.,2.) -- (3.,1.) -- (1.,1.) -- (1.,0.) -- (0.,0.) -- cycle;
			\draw[line width=1.pt] (1.,0.) -- (1.,1.) -- (3.,1.) -- (3.,2.) -- (4.,2.) -- (4.,3.) -- (8.,3.) -- (8.,2.) -- (9.,2.) -- (9.,1.) -- (11.,1.) -- (11.,0.) -- cycle;
			\fill[line width=0.pt,fill=zzttqq,pattern=north east lines, pattern color=zzttqq, fill opacity=0.5] (0.,4.) -- (0.,3.) -- (6.,3.) -- (6.,4.) -- cycle;
			
			\draw [line width=1.3pt] (0.,8.)-- (13.,0.);
			\draw [line width=1.2pt,dotted] (1.,8.)-- (1.,0.);
			\draw [line width=1.2pt,dotted] (2.,0.)-- (2.,8.);
			\draw [line width=1.2pt,dotted] (3.,0.)-- (3.,8.);
			\draw [line width=1.2pt,dotted] (4.,0.)-- (4.,8.);
			\draw [line width=1.2pt,dotted] (5.,8.)-- (5.,0.);
			\draw [line width=1.2pt,dotted] (6.,0.)-- (6.,8.);
			\draw [line width=1.2pt,dotted] (7.,8.)-- (7.,0.);
			\draw [line width=1.2pt,dotted] (8.,0.)-- (8.,8.);
			\draw [line width=1.2pt,dotted] (9.,0.)-- (9.,8.);
			\draw [line width=1.2pt,dotted] (10.,8.)-- (10.,0.);
			\draw [line width=1.2pt,dotted] (11.,0.)-- (11.,8.);
			\draw [line width=1.2pt,dotted] (12.,8.)-- (12.,0.);
			\draw [line width=1.2pt,dotted] (0.,1.)-- (13.,1.);
			\draw [line width=1.2pt,dotted] (13.,2.)-- (0.,2.);
			\draw [line width=1.2pt,dotted] (0.,3.)-- (13.,3.);
			\draw [line width=1.2pt,dotted] (13.,4.)-- (0.,4.);
			\draw [line width=1.2pt,dotted] (0.,5.)-- (13.,5.);
			\draw [line width=1.2pt,dotted] (13.,6.)-- (0.,6.);
			\draw [line width=1.2pt,dotted] (0.,7.)-- (13.,7.);
			
			\draw [line width=2.pt] (0.,7.)-- (1.,7.);
			\draw [line width=2.pt] (1.,7.)-- (1.,6.);
			\draw [line width=2.pt] (1.,6.)-- (3.,6.);
			\draw [line width=2.pt] (3.,6.)-- (3.,5.);
			\draw [line width=2.pt] (3.,5.)-- (4.,5.);
			\draw [line width=2.pt] (4.,5.)-- (4.,4.);
			\draw [line width=2.pt] (4.,4.)-- (4.,2.);
			\draw [line width=2.pt] (4.,2.)-- (3.,2.);
			\draw [line width=2.pt] (3.,2.)-- (3.,1.);
			\draw [line width=2.pt] (3.,1.)-- (1.,1.);
			\draw [line width=2.pt] (1.,1.)-- (1.,0.);
			\draw [line width=2.pt] (1.,0.)-- (0.,0.);
			\draw [line width=2.pt] (0.,0.)-- (0.,7.);
			\draw [line width=2.pt] (1.,0.)-- (1.,1.);
			\draw [line width=2.pt] (1.,1.)-- (3.,1.);
			\draw [line width=2.pt] (3.,1.)-- (3.,2.);
			\draw [line width=2.pt] (3.,2.)-- (4.,2.);
			\draw [line width=2.pt] (4.,2.)-- (4.,3.);
			\draw [line width=2.pt] (4.,3.)-- (8.,3.);
			\draw [line width=2.pt] (8.,3.)-- (8.,2.);
			\draw [line width=2.pt] (8.,2.)-- (9.,2.);
			\draw [line width=2.pt] (9.,2.)-- (9.,1.);
			\draw [line width=2.pt] (9.,1.)-- (11.,1.);
			\draw [line width=2.pt] (11.,1.)-- (11.,0.);
			\draw [line width=2.pt] (11.,0.)-- (1.,0.);

			\draw (0,0.52) node[anchor=north west] {83};
			\draw (0.45,1) node[anchor=north west] {-6};
			\draw (1,0.52) node[anchor=north west] {75};
			\draw (1.45,1) node[anchor=north west] {-9};
			\draw (2,0.52) node[anchor=north west] {67};
			\draw (2.45,1) node[anchor=north west] {-7};
			\draw (3,0.52) node[anchor=north west] {59};
			\draw (3.45,1) node[anchor=north west] {-5};
			\draw (4,0.52) node[anchor=north west] {51};
			\draw (4.45,1) node[anchor=north west] {-3};
			\draw (5,0.52) node[anchor=north west] {43};
			\draw (5.45,1) node[anchor=north west] {-1};
			\draw (6,0.52) node[anchor=north west] {35};
			\draw (6.45,1) node[anchor=north west] {1};
			
			\draw (7,0.52) node[anchor=north west] {27};
			\draw (7.45,1) node[anchor=north west] {3};
			\draw (8,0.52) node[anchor=north west] {19};
			\draw (8.45,1) node[anchor=north west] {5};
			\draw (9,0.52) node[anchor=north west] {11};
			\draw (9.45,1) node[anchor=north west] {7};
			\draw (10,0.52) node[anchor=north west] {3};
			\draw (10.45,1) node[anchor=north west] {9};

			\draw (0,1.52) node[anchor=north west] {70};
			\draw (0.45,2) node[anchor=north west] {-4};
			\draw (1,1.52) node[anchor=north west] {62};
			\draw (1.45,2) node[anchor=north west] {-8};
			\draw (2,1.52) node[anchor=north west] {54};
			\draw (2.24,2) node[anchor=north west] {-12};
			\draw (3,1.52) node[anchor=north west] {46};
			\draw (3.24,2) node[anchor=north west] {-10};
			\draw (4,1.52) node[anchor=north west] {38};
			\draw (4.45,2) node[anchor=north west] {-6};
			\draw (5,1.52) node[anchor=north west] {30};
			\draw (5.45,2) node[anchor=north west] {-2};
			\draw (6,1.52) node[anchor=north west] {22};
			\draw (6.45,2) node[anchor=north west] {2};
			\draw (7,1.52) node[anchor=north west] {14};
			\draw (7.45,2) node[anchor=north west] {6};
			\draw (8,1.52) node[anchor=north west] {6};
			\draw (8.25,2) node[anchor=north west] {10};

			\draw (0,2.52) node[anchor=north west] {57};
			\draw (0.45,3) node[anchor=north west] {-2};
			\draw (1,2.52) node[anchor=north west] {49};
			\draw (1.45,3) node[anchor=north west] {-4};
			\draw (2,2.52) node[anchor=north west] {41};
			\draw (2.45,3) node[anchor=north west] {-6};
			\draw (3,2.52) node[anchor=north west] {33};
			\draw (3.40,3) node[anchor=north west] {-8};
			\draw (4,2.52) node[anchor=north west] {25};
			\draw (4.40,3) node[anchor=north west] {-9};
			\draw (5,2.52) node[anchor=north west] {17};
			\draw (5.45,3) node[anchor=north west] {-3};
			\draw (6,2.52) node[anchor=north west] {9};
			\draw (6.45,3) node[anchor=north west] {3};
			\draw (7.,2.52) node[anchor=north west] {1};
			\draw (7.45,3) node[anchor=north west] {9};

			\draw (0,3.52) node[anchor=north west] {44};
			\draw (0.55,4) node[anchor=north west] {0};
			\draw (1,3.52) node[anchor=north west] {36};
			\draw (1.55,4) node[anchor=north west] {0};
			\draw (2,3.52) node[anchor=north west] {28};
			\draw (2.55,4) node[anchor=north west] {0};
			\draw (3,3.52) node[anchor=north west] {20};
			\draw (3.55,4) node[anchor=north west] {0};
			\draw (4,3.52) node[anchor=north west] {12};
			\draw (4.55,4) node[anchor=north west] {0};
			\draw (5,3.52) node[anchor=north west] {4};
			\draw (5.55,4) node[anchor=north west] {0};

			\draw (0,4.52) node[anchor=north west] {31};
			\draw (0.45,5) node[anchor=north west] {2};
			\draw (1,4.52) node[anchor=north west] {23};
			\draw (1.45,5) node[anchor=north west] {4};
			\draw (2,4.52) node[anchor=north west] {15};
			\draw (2.45,5) node[anchor=north west] {6};
			\draw (3,4.52) node[anchor=north west] {7};
			\draw (3.45,5) node[anchor=north west] {8};
			
			\draw (0,5.52) node[anchor=north west] {18};
			\draw (0.45,6) node[anchor=north west] {4};
			\draw (1,5.52) node[anchor=north west] {10};
			\draw (1.45,6) node[anchor=north west] {8};
			\draw (2,5.52) node[anchor=north west] {2};
			\draw (2.25,6) node[anchor=north west] {12};
			
			\draw (0,6.52) node[anchor=north west] {5};
			\draw (0.45,7) node[anchor=north west] {6};
			
			\end{tikzpicture}
			
			\caption{Polyomino game for the semigroup $\langle 8,13 \rangle$.}
			\label{fig:gran}
		\end{figure}
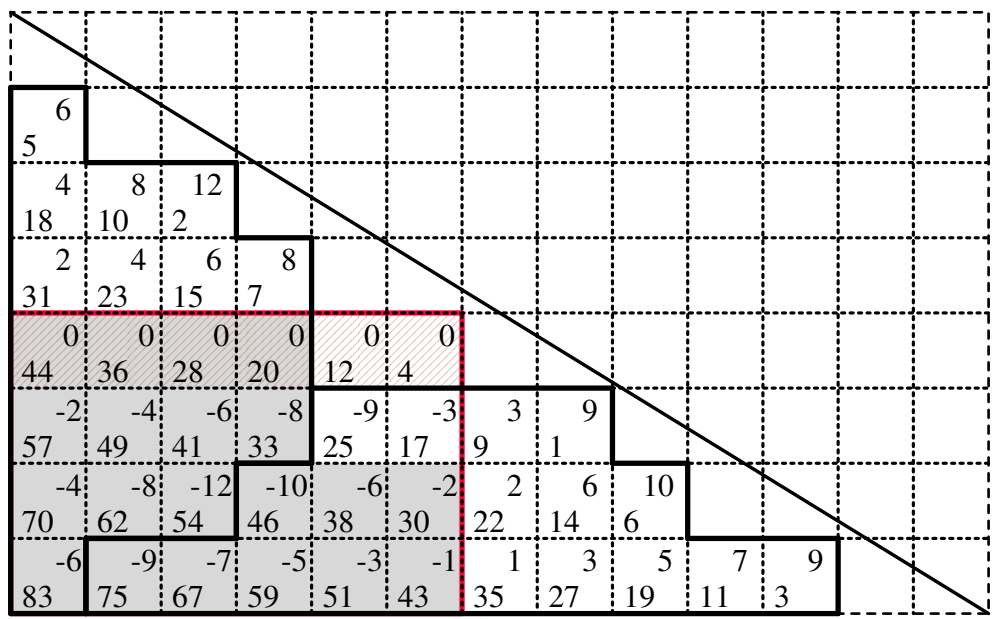
	\end{center}
\end{ex}

\subsection{Remarks on the concepts of supersymmetric and self-symmetric gaps}\label{remarksconcepts} We would like to finish the paper with a brief discussion about the possible extension of the concepts of supersymmetric and self-symmetric gaps to the case of a numerical semigroup \(\Gamma\) with arbitrary embedding dimension. 
\medskip


We remark first that our definition of supersymmetry does not coincide with the one given by Fr\"oberg, Gottlieb and H\"aggkvist in \cite{fgh}. Their notion lies on a lattice representation of each of the Ap\'ery sets with respect to all minimal generators of the semigroup in such a way that supersymmetry in the sense of \cite[Lemma 15]{fgh} means symmetry plus uniqueness of the concerned lattice representation; recall that the Ap\'ery set of $\Gamma$ with respect to a nonzero element $s\in \Gamma$ is defined to be $\{w\in \Gamma : w-s \notin \Gamma \}$. On the other hand, our notion is defined from the lattice representation of the set of gaps of the semigroup together with its properties with respect to the Wilf numbers.
\medskip

The extension of the notions of self-symmetric and supersymmetric gaps to higher embedding dimensions is trickier. The following example shows us that we cannot expect a definition through only the sign of the Wilf number of the concerned gap, since for this example all gaps have negative Wilf number; this means, the sign here is not the important issue, but the absolute value.
\begin{ex}
Let \(\Gamma=\langle 4,6,13 \rangle\). This is a symmetric semigroup which is the value semigroup of the complex plane curve singularity given by the equation $f(x,y)=x^{13}+4x^8y-x^6+2x^3y^2-y^4$; see \cite{KH} for the definition of value semigroup of a curve. The set of gaps of $\Gamma$ is $G=\{1,2,3,5,7,9,11,15\}$. We list the minimal system of generators for those $\Gamma$-semimodules $\Delta$ with embedding dimension $2$, as well as the corresponding minimal generating system for $\Delta_J=\mathrm{Syz}(\Delta)$ and its normalized $\Delta_J^{\circ}$, together with $W(\Delta_I)$:

\begin{table}[h!]
  \begin{center}
    \begin{tabular}{cccc} 
      $\Delta_I$ & $\Delta_J=\mathrm{Syz}(\Delta_I)$ & $\Delta_I^{\circ}$ & $W(\Delta_I)$   \\
      \hline\hline
      $[0,1]$ & $[13,14]$ & $[0,1]$ & $0$ \\
    $[0,2]$ & $[6,8]$ & $[0,2]$ & $0$ \\
      $[0,3]$ & $[13,16]$ & $[0,3]$ & $0$ \\
    $[0,5]$ & $[13,18]$ & $[0,5]$ & $0$ \\
      $[0,7]$ & $[13,20]$ & $[0,7]$ & $0$ \\
      $[0,9]$ & $[13,22]$ & $[0,9]$ & $0$ \\
      $[0,11]$ & $[17,19,24]$ & $[0,2,7]$ & $-2$ \\
      $[0,15]$ & $[19,21,28]$ & $$[0,2,9]$$ & $-2$ \\
   \end{tabular}
\end{center}
\end{table}

\end{ex}

In addition, the example \(\Gamma=\langle 10,14, 27\rangle\) after Theorem \ref{thm:equifix} shows that if we want to extend the concept of self-symmetric gap we cannot only focus on Wilf number zero; it seems that the notion of supersymmetry is deeper.
\medskip

Moreover, observe that the symmetries under consideration imply the following property:

\begin{proposition}\label{prop:last}
	Let \(\Gamma=\langle\alpha,\beta\rangle\) be a numerical semigroup. With the previous notation,
	\begin{itemize}
		\item[(1)] If \(\alpha\beta-a\alpha-b\beta=g\in\mathcal{T}_u\), then \(c(\Delta_{[0,g]})=c(\Delta_{[0,s_\alpha(g)]})=c(\Gamma)-a\alpha\).
		\item[(2)] If \(\alpha\beta-a\alpha-b\beta=g\in\mathcal{T}_r\), then \(c(\Delta_{[0,g]})=c(\Delta_{[0,s_\beta(g)]})=c(\Gamma)-b\beta\).
	\end{itemize}
\end{proposition}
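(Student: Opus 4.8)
The plan is to read off both conductors from Theorem \ref{formula} after locating the $\leq_{\mathbb{N}}$-largest syzygy generator in each case. For a gap $g=\alpha\beta-a\alpha-b\beta$, the semimodule $\Delta_{[0,g]}$ has exactly two syzygy generators; as in the proof of Proposition \ref{prop:wilfgap}, the lattice-path description places them at the points $(a,0)$ and $(0,b)$ of $\mathcal{L}$, i.e.\ they are the integers $\alpha\beta-a\alpha$ and $\alpha\beta-b\beta$. Since Theorem \ref{formula} gives $c(\Delta_{[0,g]})=c(\Gamma)-m_1\alpha-m_2\beta$, where $(m_1,m_2)$ is the lattice point of the largest syzygy generator $M$, the entire computation reduces to deciding whether $M$ lies at $(a,0)$ or at $(0,b)$, equivalently to comparing $a\alpha$ with $b\beta$.

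For statement (1) I would first note that membership $g\in\mathcal{T}_u$ forces $b>\lfloor\alpha/2\rfloor$ and, combined with $a\alpha+b\beta<\alpha\beta$, also $a<\beta/2$, hence $a\leq\lfloor\beta/2\rfloor$; so Lemma \ref{lem:minsyz}(1) applies and the minimal syzygy generator is $\alpha\beta-b\beta$. Thus $M=\alpha\beta-a\alpha$, sitting at $(a,0)$, and Theorem \ref{formula} yields $c(\Delta_{[0,g]})=c(\Gamma)-a\alpha$. For the reflected gap $s_\alpha(g)$, of coordinates $(a,\alpha-b)$, the syzygy generators are $\alpha\beta-a\alpha$ and $\alpha\beta-(\alpha-b)\beta=b\beta$; here the comparison is immediate, since $\alpha\beta-a\alpha-b\beta=g>0$ forces $\alpha\beta-a\alpha>b\beta$. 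Hence $M=\alpha\beta-a\alpha$ once more, giving $c(\Delta_{[0,s_\alpha(g)]})=c(\Gamma)-a\alpha$ and completing (1). Statement (2) follows \emph{mutatis mutandis} by exchanging the roles of $\alpha$ and $\beta$: Lemma \ref{lem:minsyz}(2) gives $M=\alpha\beta-b\beta$ at $(0,b)$ for $g\in\mathcal{T}_r$, and for $s_\beta(g)$, of coordinates $(\beta-a,b)$, the generators $a\alpha$ and $\alpha\beta-b\beta$ are again separated by the positivity of $g$, so both conductors equal $c(\Gamma)-b\beta$.

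The only delicate point---and the step I expect to require the most care---is the bookkeeping that singles out the correct syzygy generator as the maximum: one must verify that membership in $\mathcal{T}_u$ (resp.\ $\mathcal{T}_r$) really does supply the hypotheses of Lemma \ref{lem:minsyz}, and must keep straight which lattice point, $(a,0)$ or $(0,b)$, carries the larger generator. Once that is settled, each conductor is a single substitution into Theorem \ref{formula}, and it is precisely the positivity $g>0$ of the gap that makes the two reflected cases collapse without any further analysis.
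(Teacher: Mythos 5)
Your proof is correct and follows essentially the same route as the paper: apply Lemma \ref{lem:minsyz} to identify the maximal syzygy generator of $\mathrm{Syz}(\Delta_{[0,g]})$, use the positivity of $g$ to locate the maximal generator for the reflected gap, and read both conductors off Theorem \ref{formula}. The only difference is that you explicitly verify that $g\in\mathcal{T}_u$ supplies the hypothesis $a\leq\lfloor\beta/2\rfloor$ of Lemma \ref{lem:minsyz}, a point the paper leaves implicit.
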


\begin{proof}
We will prove only (1), and (2) follows \emph{mutatis mutandis}.
\medskip
	
Consider the gap \(\alpha\beta-a\alpha-b\beta=g\in\mathcal{T}_u\). Let \([h_0,h_1]\) be the minimal set of generators of \(\mathrm{Syz}(\Delta_{[0,g]})\). Then by Lemma \ref{lem:minsyz}
 $$
 \mathrm{min}\{h_0,h_1\}=\alpha\beta-b\beta,
 $$
 
and Theorem \ref{formula} implies that \(c(\Delta_{[0,g]})=c(\Gamma)-a\alpha\).
\medskip
 
Consider now \(s_\alpha(g)=\alpha\beta-a\alpha-(\alpha-b)\beta\), and denote by 
$$
[h'_0=\alpha\beta-a\alpha,h'_1=\alpha\beta-(\alpha-b)\beta]
$$
the minimal set of generators of \(\mathrm{Syz}(\Delta_{[0,s_\alpha(g)]})\). In order to finish it would be enough to prove that \(\mathrm{min}\{h'_0,h'_1\}=h'_1\); but this is immediate, as
$$
 h'_0-h'_1=\alpha\beta-a\alpha-\alpha\beta+(\alpha-b)\beta=\alpha\beta-a\alpha-b\beta=g>0.
$$
\end{proof}

The previous discussion together with Proposition \ref{prop:last} lead us to propose the following definition of supersymmetry of gaps for a numerical semigroup of arbitrary embedding dimension.

\begin{defin}\label{defin:extensionsym}
	Let \(\Gamma=\langle x_1,\dots,x_n\rangle\) be a numerical semigroup minimally generated by $x_1,\ldots, x_n$. We define on the set of gaps \(G:=\mathbb{N}\setminus\Gamma\) the relation
	\[
	g_1\sim_{c}g_2\Longleftrightarrow c(g_1)=c(g_2) \ \ \mbox{for any} \ g_1, g_2 \in G.
	\]
	This is in fact an equivalence relation which thus provides a partition of the set gaps into equivalence classes. This partition will be called the \textit{gap conductor partition} of $G$. We say that two gaps \(g_1,g_2\) are \textit{candidates to be supersymmetric} if \(g_1\sim_{c} g_2\). In addition, we say that two gaps \(g_1,g_2\) are \textit{supersymmetric} if \(g_1\sim_{c} g_2\) and \(|W(g_1)|=|W(g_2)|\) and we will say that \(g\) is \textit{self-symmetric} if there is no other \(g'\) such that \(g\sim_{c} g'\).
\end{defin}
\begin{rem}
Observe that in the case of a numerical semigroup with embedding dimension \(2\), the sets of supersymmetric gaps and self-symmetric gaps are the set of elements that represent the equivalence classes of a special set of supersymmetric and self-symmetric gaps in terms of the previous definition.
\end{rem}
\begin{rem}
Observe that we are giving a definition of two gaps to be supersymmetric. This definition has no relation and has not to be mixed with the symmetry properties of the semigroup in the sense of \cite{KH}.
\end{rem}

In the special case \(\Gamma=\langle\alpha,\beta\rangle\), two gaps \(g_1,g_2\) are supersymmetric if and only if \(s_\alpha(g_1)=g_2\) or \(s_\beta(g_1)=g_2\). Moreover, there is no three different supersymmetric gaps; i.e. either \(g_1\) is supersymmetric to a unique \(g_2\neq g_1\) or \(g_1\) is self-symmetric and then it is its own supersymmetric point. Therefore, Definition \ref{defin:extensionsym} allows us to extent the properties of the lattice symmetries to purely algebraic properties of the gaps. This discussion leads to pose the following closing questions:

\begin{question}
	Given a symmetric numerical semigroup \(\Gamma=\langle x_1,\dots,x_n\rangle\), we ask:
	\begin{enumerate}
		\item[(1)] For \(n>2\), does there exist a subset of the set of gaps such that the supersymmetry property defined in Definition \ref{defin:extensionsym} allows to recover the whole semigroup from this set?
		\item[(2)] Does there exist a lattice representation in \(\mathbb{Z}^{n}\) of the set of gaps such that ---in analogy with the case \(n=2\)--- it can be made up from the sets of self-symmetric and supersymmetric gaps together with some affine transformation of them?
	\end{enumerate}
\end{question}


\end{document}